\newcommand{\I}{\mathbf{I}}
\newcommand{\topt}{\mathrm{T}}
\newcommand{\subsett}{\subset}
\newcommand{\bsig}{\boldsymbol{\sigma}}
\newcommand{\btau}{\boldsymbol{\tau}}
\newcommand{\bphi}{\boldsymbol{\phi}}
\newcommand{\symg}{\nabla_s}
\newcommand{\ep}{\frac{1}{\varepsilon^{2}}}
\newcommand{\eun}{e_{u}^{n}}
\newcommand{\esn}{e_{\bsig}^{n}}
\newcommand{\tu }{\theta_{u}}
\newcommand{\ts }{\theta_{\bsig}}
\newtheorem{theorem}{Theorem}[section]
\newtheorem{lemma}[theorem]{Lemma}
\newtheorem{remark}[theorem]{Remark}
\newproof{proof}{Proof}
\newproof{proof2}{Proof of Lemma \ref{thminfsup}}
\numberwithin{equation}{section}
\numberwithin{table}{section}
\begin{document}

\begin{frontmatter}
\nolinenumbers
\title{A New Mixed Finite Element Method For The Cahn-Hilliard Equation}

\author[1,4]{Zhen Liu}
\ead{zliu37@pku.edu.cn} 
\affiliation[1]{organization={LMAM and School of Mathematical Sciences, Peking University},
                addressline={No.5 Yiheyuan Road,Haidian District}, 
                postcode={100871}, 
                city={Beijing}, 
                country={P. R. China}}
\affiliation[4]{organization={Chongqing Research Institute of Big Data, Peking University},
                addressline={Building 10, Science Valley, High-tech District}, 
                postcode={401332}, 
                city={Chongqing}, 
                country={P. R. China}}
\author[2]{Rui Ma  \corref{cor1}% 
        \fnref{fn1}}
\ead{rui.ma@bit.edu.cn} 
\affiliation[2]{organization={Beijing Institute of Technology},
                addressline={No.5 South Zhong Guan Cun Street, Haidian District}, 
                postcode={100081}, 
                city={Beijing}, 
                country={P. R. China}}
\author[3]{Min Zhang \fnref{fn2}}
\ead{zhangmind01@bjfu.edu.cn} 
\affiliation[3]{organization={College of Science, Beijing Forestry University},
                addressline={No.35 Qinghua East Road, Haidian District}, 
                postcode={100083}, 
                city={Beijing}, 
                country={P. R. China}}

\cortext[cor1]{Corresponding author}
\fntext[fn1]{The work of the second author was partially supported by NSFC project 12301466.} 
\fntext[fn2]{{The work of the third author was partially supported by NSFC project 12401510 and BLX202347.}}

%% Abstract
\begin{abstract}
This paper presents a new mixed finite element method for the Cahn-Hilliard equation. The well-posedness of the mixed formulation is established and the error estimates for its linearized fully discrete scheme are provided. {The new mixed finite element method provides a unified construction in two and three dimensions allowing for arbitrary polynomial degrees.} Numerical experiments are given to validate the efficiency and accuracy of the theoretical results.
\end{abstract}

%% Keywords
\begin{keyword}
%% keywords here, in the form: keyword \sep keyword
Cahn-Hilliard \sep mixed finite element method \sep nonlinear problem \sep error estimates
%% PACS codes here, in the form: \PACS code \sep code

%% MSC codes here, in the form: \MSC code \sep code
%% or \MSC[2008] code \sep code (2000 is the default)
\MSC[2008] 65M12, 65M60, 65Z05
\end{keyword}

\end{frontmatter}

\section{Introduction}
Let $\Omega$ be a bounded polyhedron in $\mathbb{R}^3$ or a polygon in $\mathbb{R}^2$ with the Lipschitz boundary $\partial \Omega$.
	This paper considers the Cahn-Hilliard equation 
	\begin{equation}
		\label{teq}
		\left\{  
		\begin{aligned}
			\frac{\partial u}{\partial t} -\Delta\left(-\Delta u+\frac{1}{\varepsilon^2} f(u)\right) &= g(\boldsymbol{x},t), && \text{in} ~ \Omega \left. \times(0, T\right], &\\
			{\partial_{\boldsymbol{n}}} u &= g_a(\boldsymbol{x},t), &&\text{on} ~ \partial\Omega \left. \times(0, T\right],& \\
			{\partial_{\boldsymbol{n}}} \left(\Delta u-\frac{1}{\varepsilon^2} f(u)\right) &= g_b(\boldsymbol{x},t),&&\text{on} ~ \partial\Omega \left. \times(0, T\right], &\\ u(\boldsymbol{x},0)&=u_0(\boldsymbol{x}),&&\text{in} ~ \Omega \times \{0\}.& 
		\end{aligned}  \right.
	\end{equation}
	Here the symbol $\Delta$ is the Laplacian operator, $T>0$ is a fixed constant, and ${\partial_{\boldsymbol{n}}} (\cdot)$ is the normal derivative where $\boldsymbol{n}$ is the unit outward normal vector of $\partial \Omega$. The function $f(u)$ is the derivative of a smooth chemical potential $F(u)$, and the widely used Ginzburg-Landau double-well potential $F (u) = \frac{1}{4} (u^{2} - 1)^{2}$ will be taken in this paper. The function $u_0(\boldsymbol{x})$ is the initial data. The functions $g(\boldsymbol{x}, t),g_a(\boldsymbol{x}, t),g_b(\boldsymbol{x}, t)$ serve as prescribed source terms assuming to be zero unless the contrary is explicitly stated. 
	
	The Cahn-Hilliard equation was introduced by Cahn  and Hilliard \cite{cahn1958free} to describe the complicated phase separation and coarsening phenomena in a solid where only two different concentration phases can exist stably. The unknown function $u(\boldsymbol{x},t)$ represents the concentration of each component and the parameter $\varepsilon > 0$ represents the inter-facial width. For more physical background and derivation of the Cahn-Hilliard equation, refer to \cite{cahn1965phase, Novick1998} and the references therein. The Cahn-Hilliard equation has been not only widely used in many complicated moving interface problems, multi-phase fluid flow and fluid dynamics \cite{Anderson1998, Shen2004complex, Feng2006pinching}, but also paired with other equations that describe physical behaviour of a given physical system, see, e.g., \cite{Abels2009twophase, feng2006NSCH, Grun2014twophase, Lee2002a}.
	
	Numerous studies on numerical methods have been conducted over the past thirty years, including finite element methods \cite{elliott1987numerical, wu2020analysis}, finite difference methods \cite{Sun1995FDM, Furihata2003FDM, Guo2016FEM}, finite volume methods \cite{Nabet2016FVM, Appadu2017FVM}, and spectral methods \cite{Feng2013spectral, Chen2013spectral}. 
	This paper focuses on finite element methods for the Cahn-Hilliard equation.
	Ensuring $C^1$ continuity is imperative when directly discretizing this fourth-order equation with conforming elements. Elliott and French \cite{elliott1987numerical} employed the spline finite element space to establish a fully discrete scheme for 1D problem. However, it is much more difficult to preserve $C^1$ continuity in high-dimensional spaces.
	To alleviate spatial continuity requirements, an alternative strategy is to use nonstandard finite element methods, such as nonconforming element methods \cite{elliott1989nonconforming, wu2020analysis, zhang2010nonconforming}, discontinuous Galerkin methods \cite{feng2007fully, feng2012analysis, kay2009discontinuous}, local discontinuous Galerkin methods \cite{Shu2007LDG}, weak Galerkin methods \cite{Zhang2019WG} and virtual element methods \cite{antonietti2016c}. 
	Besides, based on the Ciarlet-Raviart mixed method \cite{ciarlet1974mixed} for biharmonic equations, the chemical potential was introduced as an auxiliary variable in \cite{elliott1989second} which splits the Cahn-Hilliard equation into two coupled problems involving second-order spatial derivatives. This gives rise to a mixed finite element method that employs only continuous finite elements. Various fully discrete schemes using this mixed finite element method for spatial discretization have been developed and analyzed, see,  e.g., \cite{du1991numerical,elliott1992error,copetti1992numerical,diegel2016stability,chen2024recovery,feng2008posteriori}.
	
    In recent years, a mixed finite element method that introduces the Hessian as an auxiliary variable has been developed, see, e.g.,  \cite{pauly2020divdiv,chen2022finite,hu2021family,fuhrer2023mixed,chen2025new}.  
   Based on this method, this paper proposes a new mixed finite element method for the Cahn-Hilliard equation. 
   Compared to nonconforming methods, the new mixed method offers a unified construction of finite elements in two, three, and even higher dimensions \cite{chen2022finite1}, while also allowing for arbitrary polynomial degrees of discretization.
	The new mixed finite element method presented in this paper simultaneously seeks $u$ and the introduced variable $\bsig = \nabla^{2} u -\frac{1}{\varepsilon^{2}}f(u) \I $, which is sought in $H(\operatorname{divDiv},\Omega; \mathbb{S})$, consisting of symmetric matrix-valued $L^2$ functions whose $\operatorname{divDiv}$ belongs to $L^2(\Omega)$, with proper boundary conditions. The well-posedness of the mixed formulation is established by demonstrating its equivalence to the primal formulation of the Cahn-Hilliard equation. 
	Two distinct boundary conditions for the new mixed finite element method are explored. Specifically, one leads to an equivalent formula while the other is more sufficient. 
	This paper mainly considers the latter boundary condition and employs the  $H(\operatorname{divDiv},\Omega;\mathbb{S})$ conforming finite element in \cite{hu2021family} for the discretization of $\bsig$. Nevertheless, other $H(\operatorname{divDiv},\Omega;\mathbb{S})$ elements \cite{chen2022finite,fuhrer2023mixed} can also be employed.
	Besides, this paper provides detailed error estimates for the mixed finite element method.
	A key step in the proof is to utilize a broken $H^2$-norm of the numerical solution $u_h$ to control its $L^\infty$-norm. Mathematical induction is applied simultaneously to estimate the $L^\infty$-norm of $u_h$ and the $L^2$ errors of $\bsig_h$ and $u_{h}$.
	Numerical experiments are presented to validate the theoretical results.
    A postprocessing technique is employed to improve the convergence rates for $\boldsymbol{\sigma}_h$ and $u_h$.
	
	This paper is organized as follows. Section \ref{sec:preliminaries} introduces some notations and lemmas. Section \ref{sec:Mixedformulation} presents the mixed formulation and provides its equivalence to the primal formulation. Section \ref{sec4} gives the linearized fully discrete scheme of the mixed formulation and the error estimates.  Section \ref{sec:NumericalResults} provides numerical experiments to validate the theoretical results.

	%==============================================
	% ======== 2 Preliminaries =================
	% =============================================
	
	\section{Preliminaries}
    \label{sec:preliminaries}
	Given a bounded domain $D \subsett \mathbb{R}^d$ with $d=2,3$, let $L^p(D;X)$ denote the standard Lebesgue spaces of functions within $D$, taking values in space $X$, with the corresponding norm $\| \cdot \|_{L^p(D)}$. 
	Similarly, let $H^m(D;X)$ denote the standard Sobolev spaces of functions for positive integers $m$ with the corresponding norm $\| \cdot \|_{H^m(D)}$,
	and let $C^m(D ; X)$ denote the space of $m$-times continuously differentiable functions.
	The $L^2$-scalar product over $D$ is denoted as $(\cdot, \cdot)_D$.
	The subscript $D$ in $\| \cdot \|_{L^p(D)}$, $\| \cdot \|_{H^m(D)}$ and $(\cdot, \cdot)_D$ will be omitted if $D=\Omega$.
	In this paper, $X$ could be $\mathbb{R}$, $\mathbb{R}^d$ or $\mathbb{S}$, where $\mathbb{S}$ denotes the set of symmetric $\mathbb{R}^{d \times d}$ matrices. 
	If $X=\mathbb{R}$, then $L^2(D)$ abbreviates $L^2(D;X)$, similarly for $H^m(D)$ and $C^m(D)$. 
	Let $H^{-m}(D)$ denote the dual space of $H_0^m(D)$ and $\langle \cdot, \cdot \rangle_{ H^{-m}\times H_0^m}$ denote the duality product on $H^{-m}(D) \times H_0^{m}(D)$.
	Let $Y$ be a real Banach space with the norm $\|\cdot\|_{Y}$. 
	The space $L^\infty(0, T ; Y)$ consists of all measurable functions $u:[0, T]$ $\rightarrow Y$ with
	\begin{equation}
		\label{deflinfty}
		\|u\|_{L^{\infty}(0, T ; Y)}\coloneqq\sup _{0 \leq t \leq T}\|u(t)\|_{Y}<\infty .
	\end{equation}
	
	If $D \subset \mathbb{R}^3$ is a polyhedron, then let $\mathcal{F}(D), \mathcal{E}(D)$ and $\mathcal{V}(D)$ be the sets of all faces, edges and vertices of $D$, respectively. For a face $F \in \mathcal{F}(D)$, the unit outer normal vector $\boldsymbol{n}_F$ is fixed. For any $F \in \mathcal{F}(D)$, let $\mathcal{E}(F)$ be the set of all edges of $F$. {Specifically, for each $e \in \mathcal{E}(F)$, denote by $\boldsymbol{n}_{F, e}$ the unit vector parallel to $F$ and outward normal to $\partial F$.} Given an edge $e \in \mathcal{E}(D)$, the unit tangential vector $\boldsymbol{t}_e$, and two unit normal vectors, $\boldsymbol{n}_{e, 1}$ and $\boldsymbol{n}_{e, 2}$, are fixed. If $D \subset \mathbb{R}^2$ is a polygon, then let $\mathcal{E}(D)$ and $\mathcal{V}(D)$ be the sets of all edges and vertices of $D$, respectively. Given an edge $e \in \mathcal{E}(D)$, the unique unit outer normal vector $\boldsymbol{n}_e$ and the unit tangential vector $\boldsymbol{t}_e$ are fixed. In the absence of ambiguity, the symbol $\boldsymbol{n}$ instead of $\boldsymbol{n}_e$ and $\boldsymbol{n}_{F}$ in two and three dimensions, respectively, will be used to denote the unit outer normal vector of $\partial D$. Let $\I$ denote the identity matrix in $\mathbb{R}^{d}$. 
	 For $D \subset \mathbb{R}^3$, given $F \in \mathcal{F}(D)$, {vector} $\boldsymbol{v}$ and tensor $\boldsymbol{\tau}$, {define}
	\begin{equation}
		\label{defoperator}
		\begin{aligned}
			\Pi_F \boldsymbol{v} & \coloneqq (\I-\boldsymbol{n}_{F}\boldsymbol{n}_F^\topt)\boldsymbol{v}, \\
			\operatorname{div}_F(\btau\boldsymbol{n}_F)& \coloneqq (\boldsymbol{n}_F \times \nabla) \cdot(\boldsymbol{n}_F \times(\btau\boldsymbol{n}_F)).
		\end{aligned}
	\end{equation}
 
	In this paper, the differential operator $\operatorname{Div}$ for matrix functions is applied row-wise. The following lemmas are presented for later use.
	
	\begin{lemma}[Green's identity in 3D \cite{chen2022finite,Niemi2019}]
		\label{Green}
		Suppose $\Omega \subset \mathbb{R}^3$. Let $\btau \in H(\operatorname{divDiv}, \Omega ; \mathbb{S}) \cap C^1(\bar \Omega; \mathbb{S}) $ and $v \in H^2(\Omega)$. Then the following equality holds
		\begin{equation}
			\label{Green1}
			\begin{aligned}(\operatorname{divDiv} \btau, v) =\left(\btau, \nabla^2 v\right) -\sum_{F \in \mathcal{F}(\Omega)}(\btau \boldsymbol{n}_F, \nabla v)_F+\sum_{F \in \mathcal{F}(\Omega)}\left(\boldsymbol{n}_{F}^{\mathrm{T}} \operatorname{Div} \btau, v\right)_F,\end{aligned}
		\end{equation}
		and furthermore
		\begin{equation}
			\label{Green2}
			\begin{aligned}
				(\operatorname{divDiv}\btau, & v)  =\left(\btau, \nabla^2 v\right) -\sum_{F \in \mathcal{F}(\Omega)} \sum_{e \in \mathcal{E}(F)}\left(\boldsymbol{n}_{F, e}^{\mathrm{T}} \btau \boldsymbol{n}_F, v\right)_e \\
				& -\sum_{F \in \mathcal{F}(\Omega)}\left[\left(\boldsymbol{n}_F^{\mathrm{T}} \btau \boldsymbol{n}_F, \frac{\partial v}{\partial \boldsymbol{n}_F}\right)_F-\left(2 \operatorname{div}_F\left(\btau \boldsymbol{n}_F\right)+\frac{\partial\left(\boldsymbol{n}_F^{\mathrm{T}} \btau \boldsymbol{n}_F\right)}{\partial \boldsymbol{n}_F}, v \right)_F\right].
			\end{aligned}
		\end{equation}
	\end{lemma}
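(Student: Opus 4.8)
The plan is to prove \eqref{Green1} first, by integrating by parts twice, and then to transform its face integrals through a tangential integration by parts on each face so as to arrive at \eqref{Green2}. For \eqref{Green1} I would first take $v\in C^{\infty}(\bar\Omega)$ and remove this extra regularity at the end by density. Writing $\operatorname{divDiv}\btau=\operatorname{div}(\operatorname{Div}\btau)$ with $\operatorname{Div}\btau$ the vector‑valued row‑wise divergence, the divergence theorem on the polyhedron $\Omega$ gives
\[
(\operatorname{divDiv}\btau,v)=-(\operatorname{Div}\btau,\nabla v)+\sum_{F\in\mathcal{F}(\Omega)}(\boldsymbol{n}_F^{\mathrm{T}}\operatorname{Div}\btau,v)_F,
\]
and a second integration by parts carried out entry by entry in $(\operatorname{Div}\btau,\nabla v)=\sum_{i,j}(\partial_j\tau_{ij},\partial_i v)$ yields $-(\operatorname{Div}\btau,\nabla v)=(\btau,\nabla^2 v)-\sum_{F\in\mathcal{F}(\Omega)}(\btau\boldsymbol{n}_F,\nabla v)_F$; adding the two identities gives \eqref{Green1} for smooth $v$. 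To pass to $v\in H^2(\Omega)$ I pick $v_k\in C^\infty(\bar\Omega)$ with $v_k\to v$ in $H^2(\Omega)$: the volume terms converge at once, and on each face $F$ the trace bound $\|v_k-v\|_{L^2(F)}+\|\nabla v_k-\nabla v\|_{L^2(F)}\le C\|v_k-v\|_{H^2(\Omega)}$, together with $\btau,\operatorname{Div}\btau\in C^0(\bar\Omega)$, lets the face terms pass to the limit.

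For \eqref{Green2} I would rework the term $(\btau\boldsymbol{n}_F,\nabla v)_F$. Since every face $F$ of the polyhedron is flat, decomposing $\nabla v=\Pi_F\nabla v+\tfrac{\partial v}{\partial\boldsymbol{n}_F}\boldsymbol{n}_F$ on $F$, with $\Pi_F\nabla v$ the surface gradient $\nabla_F v$ of $v|_F$, gives
\[
(\btau\boldsymbol{n}_F,\nabla v)_F=\left(\Pi_F(\btau\boldsymbol{n}_F),\nabla_F v\right)_F+\left(\boldsymbol{n}_F^{\mathrm{T}}\btau\boldsymbol{n}_F,\tfrac{\partial v}{\partial\boldsymbol{n}_F}\right)_F.
\]
Applying the divergence theorem on the polygon $F$, whose boundary edges are $\mathcal{E}(F)$ with in‑plane outward conormals $\boldsymbol{n}_{F,e}$, to the first term, and using that $\boldsymbol{n}_{F,e}$ is tangent to $F$ so that $\boldsymbol{n}_{F,e}\cdot\Pi_F(\btau\boldsymbol{n}_F)=\boldsymbol{n}_{F,e}^{\mathrm{T}}\btau\boldsymbol{n}_F$, one obtains
\[
\left(\Pi_F(\btau\boldsymbol{n}_F),\nabla_F v\right)_F=-\left(\operatorname{div}_F(\Pi_F(\btau\boldsymbol{n}_F)),v\right)_F+\sum_{e\in\mathcal{E}(F)}\left(\boldsymbol{n}_{F,e}^{\mathrm{T}}\btau\boldsymbol{n}_F,v\right)_e.
\]
Substituting back into \eqref{Green1} and collecting the two face contributions $\left(\operatorname{div}_F(\Pi_F(\btau\boldsymbol{n}_F)),v\right)_F$ and $\left(\boldsymbol{n}_F^{\mathrm{T}}\operatorname{Div}\btau,v\right)_F$, it then remains only to establish, pointwise on $F$, the two identities
\[
\operatorname{div}_F(\Pi_F(\btau\boldsymbol{n}_F))=(\boldsymbol{n}_F\times\nabla)\cdot(\boldsymbol{n}_F\times(\btau\boldsymbol{n}_F))=\operatorname{div}_F(\btau\boldsymbol{n}_F),\qquad \boldsymbol{n}_F^{\mathrm{T}}\operatorname{Div}\btau=\operatorname{div}_F(\btau\boldsymbol{n}_F)+\frac{\partial(\boldsymbol{n}_F^{\mathrm{T}}\btau\boldsymbol{n}_F)}{\partial\boldsymbol{n}_F},
\]
the first following from $\boldsymbol{n}_F$ being constant on the flat face $F$ together with \eqref{defoperator}, and the second from a short computation in a local orthonormal frame whose third vector is $\boldsymbol{n}_F$, using the symmetry $\btau^{\mathrm{T}}=\btau$. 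Adding the two face contributions then produces $2\operatorname{div}_F(\btau\boldsymbol{n}_F)+\frac{\partial(\boldsymbol{n}_F^{\mathrm{T}}\btau\boldsymbol{n}_F)}{\partial\boldsymbol{n}_F}$, which is exactly the face term in \eqref{Green2}.

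The two integrations by parts above are routine, and I expect the main obstacle to be the bookkeeping at the end: identifying $(\boldsymbol{n}_F\times\nabla)\cdot(\boldsymbol{n}_F\times(\btau\boldsymbol{n}_F))$ with the genuine surface divergence of the tangential field $\Pi_F(\btau\boldsymbol{n}_F)$, and verifying the relation among $\boldsymbol{n}_F^{\mathrm{T}}\operatorname{Div}\btau$, $\operatorname{div}_F(\btau\boldsymbol{n}_F)$ and $\frac{\partial(\boldsymbol{n}_F^{\mathrm{T}}\btau\boldsymbol{n}_F)}{\partial\boldsymbol{n}_F}$ — this is precisely where the flatness of the faces and the symmetry of $\btau$ are used. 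One should also check that the tangential integration by parts on $F$ is legitimate, which holds because $\btau\in C^1(\bar\Omega;\mathbb{S})$ gives $\Pi_F(\btau\boldsymbol{n}_F)|_F\in C^1(\bar F)$ while $v|_F\in H^{3/2}(F)\subset H^1(F)$.
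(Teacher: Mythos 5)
Your proposal is correct and complete: the two integrations by parts give the first identity, and the tangential integration by parts on each flat face together with the pointwise identities $\operatorname{div}_F(\Pi_F(\btau\boldsymbol{n}_F))=\operatorname{div}_F(\btau\boldsymbol{n}_F)$ and $\boldsymbol{n}_F^{\mathrm{T}}\operatorname{Div}\btau=\operatorname{div}_F(\btau\boldsymbol{n}_F)+\partial(\boldsymbol{n}_F^{\mathrm{T}}\btau\boldsymbol{n}_F)/\partial\boldsymbol{n}_F$ (the latter using the symmetry of $\btau$) yields the second, exactly as in the references the paper cites. The paper itself gives no proof of this lemma, quoting it from the literature, and your derivation coincides with the standard argument there.
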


	\begin{lemma}[Discrete Gronwall's inequality \cite{Heywood1990}]
		\label{lemgron}
		Let $\tau, B$ and $a_k, b_k, c_k, \gamma_k$, for integers $k \geq 0$, be non-negative numbers such that
		$$
		a_n+\tau \sum_{k=0}^n b_k \leq \tau \sum_{k=0}^n \gamma_k a_k+\tau \sum_{k=0}^n c_k+B, \text { for } n \geq 0,
		$$
		suppose that $\tau \gamma_k<1$, for all $k$, and set $\sigma_k=\left(1-\tau \gamma_k\right)^{-1}$. Then
		$$
		a_n+\tau \sum_{k=0}^n b_k \leq \exp \left(\tau \sum_{k=0}^n \gamma_k \sigma_k\right)\left(\tau \sum_{k=0}^n c_k+B\right), \text { for } n \geq 0.
		$$
	\end{lemma}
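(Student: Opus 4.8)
\emph{Proof proposal.}
The statement is the classical discrete Gronwall inequality, so the plan is a short induction on $n$, powered by one elementary scalar estimate. Throughout I would abbreviate $D_n := \tau\sum_{k=0}^n c_k + B$, which is nondecreasing in $n$ because the $c_k$ are nonnegative, and recall that $\sigma_k = (1-\tau\gamma_k)^{-1}$ is a well-defined positive number since $\tau\gamma_k < 1$. The first reduction is to isolate $a_n$: in the hypothesis split $\tau\sum_{k=0}^n\gamma_k a_k = \tau\gamma_n a_n + \tau\sum_{k=0}^{n-1}\gamma_k a_k$, move $\tau\gamma_n a_n$ to the left, discard the nonnegative term $\tau\sum_{k=0}^n b_k$ there, and multiply through by $\sigma_n$; this yields
\[
a_n \;\le\; \sigma_n\Big(\tau\sum_{k=0}^{n-1}\gamma_k a_k + D_n\Big).
\]

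The one ingredient that is not pure bookkeeping is the following elementary fact, which I would record first: for nonnegative reals $x_0, x_1, \dots$ and $S_m := \sum_{j=0}^{m-1} x_j$ (so $S_0 = 0$), the inequality $e^{x_k} - 1 \ge x_k$ gives $e^{S_{k+1}} - e^{S_k} = e^{S_k}(e^{x_k}-1) \ge x_k e^{S_k}$, whence, summing telescopically,
\[
\sum_{k=0}^{m} x_k e^{S_k} + 1 \;\le\; e^{S_{m+1}} \qquad\text{for all } m \ge 0.
\]
I will use it with $x_k := \tau\gamma_k\sigma_k$, so that $S_m = \tau\sum_{j=0}^{m-1}\gamma_j\sigma_j$.

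With these in hand the induction is immediate. I claim $a_n \le \sigma_n e^{S_n} D_n$ for every $n$: the case $n=0$ is exactly the displayed bound above (empty sum, $S_0 = 0$), and if the claim holds for all indices smaller than $n$, then inserting it into the displayed bound and using $D_k \le D_n$ gives
\[
a_n \;\le\; \sigma_n\Big(\tau\sum_{k=0}^{n-1}\gamma_k\sigma_k e^{S_k} D_k + D_n\Big) \;\le\; \sigma_n D_n\Big(\sum_{k=0}^{n-1} x_k e^{S_k} + 1\Big) \;\le\; \sigma_n D_n e^{S_n},
\]
the last step being the telescoping estimate with $m = n-1$. Finally I would feed $a_k \le \sigma_k e^{S_k} D_k$ back into the original hypothesis — this time keeping $\tau\sum_{k=0}^n b_k$ on the left and not discarding the $k=n$ term — to obtain
\[
a_n + \tau\sum_{k=0}^n b_k \;\le\; \tau\sum_{k=0}^n \gamma_k\sigma_k e^{S_k} D_k + D_n \;\le\; D_n\Big(\sum_{k=0}^n x_k e^{S_k} + 1\Big) \;\le\; D_n\, e^{S_{n+1}},
\]
and since $S_{n+1} = \tau\sum_{k=0}^n \gamma_k\sigma_k$ and $D_n = \tau\sum_{k=0}^n c_k + B$, this is precisely the asserted estimate.

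There is no real obstacle here; the only thing that needs care is the index bookkeeping. In particular one must note that the exponent produced by the telescoping argument is $S_{m+1}$, not $S_m$ — this is exactly what makes the final exponent come out as $\tau\sum_{k=0}^n\gamma_k\sigma_k$ rather than $\tau\sum_{k=0}^{n-1}\gamma_k\sigma_k$ — and one uses repeatedly that $b_k \ge 0$ (to drop $\tau\sum_{k=0}^n b_k$ in the first reduction) and that $(D_n)$ is monotone (to pull $D_n$ out of the sums). I expect the whole argument to occupy less than half a page.
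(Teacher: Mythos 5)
Your proof is correct. Note that the paper itself supplies no proof of this lemma; it is quoted verbatim from the reference \cite{Heywood1990}, so there is no in-paper argument to compare against. Your argument is the standard one (and essentially the one in that reference): isolate $a_n$ using $\tau\gamma_n<1$ to get $a_n\le\sigma_n\bigl(\tau\sum_{k=0}^{n-1}\gamma_k a_k+D_n\bigr)$, prove $a_n\le\sigma_n e^{S_n}D_n$ by strong induction via the telescoping bound $\sum_{k=0}^{m}x_ke^{S_k}+1\le e^{S_{m+1}}$ with $x_k=\tau\gamma_k\sigma_k$, and then substitute back into the original hypothesis to recover the $b_k$ sum on the left and the exponent $S_{n+1}=\tau\sum_{k=0}^{n}\gamma_k\sigma_k$. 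All index bookkeeping checks out (in particular the shift from $S_n$ in the intermediate claim to $S_{n+1}$ in the final bound), the monotonicity of $D_n$ and nonnegativity of the $b_k$ are used exactly where needed, and the base case is consistent with $S_0=0$. No gaps.
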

	
	%==============================================
	% ======== 3 Mixed variational formulation =================
	% =============================================

	\section{Mixed variational formulation}
    \label{sec:Mixedformulation}
	This section presents the new mixed variational formulation for the Cahn-Hilliard equation \eqref{teq} and demonstrates its equivalence to the primal variational formulation.
	
	Define
	$$H_E^2(\Omega) \coloneqq \{  v \in H^2(\Omega): {\partial_{\boldsymbol{n}}} v = 0, \text{on}~ \partial \Omega \} .$$  
	The primal formulation of \eqref{teq} reads: Find $u \in H^2_{E}(\Omega)$ such that
	\begin{equation}
		\label{classicalvar}
		(\frac{\partial u}{\partial t}, v ) + ( \nabla^2 u , \nabla^2 v )= ( \ep f(u) \I, \nabla^2 v), \quad \forall v \in H_E^2(\Omega).
	\end{equation}
    According to {\cite{feng2001numerical}}, \eqref{classicalvar} possesses a unique solution for $f(u)=u^3-u$ when $\partial \Omega$ is smooth enough.
	
	Introducing the auxiliary variable $$\bsig  \coloneqq \nabla^{2} u -\frac{1}{\varepsilon^{2}}f(u) \I,$$
	one can reformulate the Cahn-Hilliard equation \eqref{teq} into
	\begin{equation}
		\label{meq}
		\left\{  
		\begin{aligned}
			\frac{\partial u}{\partial t} + \operatorname{divDiv} \bsig & = 0,\quad && \text{in}~\Omega \left. \times(0, T \right],  \\
			\bsig &= \nabla^{2} u -\frac{1}{\varepsilon^{2}}f(u)\I, \quad &&\text{in}~ \left. \Omega \times(0, T \right],  \\
			{\partial_{\boldsymbol{n}}} u & = 0,\quad &&\text{on} ~\partial \Omega \left. \times(0, T\right], \\
			\operatorname{Div} \bsig \cdot \boldsymbol{n} &= 0,\quad &&\text{on} ~\partial\Omega \left. \times(0, T\right], \\
            u(\boldsymbol{x},0)&=u_0(\boldsymbol{x}),&&\text{on} ~ {\Omega}  \times \{0\}.& \\
		\end{aligned} \right.
	\end{equation}
	Define
	$$
	H(\operatorname{divDiv}, \Omega ; \mathbb{S}) \coloneqq \left\{\btau \in L^2(\Omega ; \mathbb{S}): \operatorname{divDiv} \btau \in L^2(\Omega)\right\},
	$$
	equipped with the squared norm
	$
	\| \btau  \|_{H(\operatorname{divDiv})}^2 \coloneqq \|\btau\|_{L^2}^2+\|\operatorname{divDiv} \btau\|_{L^2}^2 .
	$
	By employing a similar approach as presented in  \cite{Niemi2019,Walter2018}, define a nonstandard Sobolev space with a portion of the trace vanishing as 
	\begin{equation}
		\label{Sigma}
		\Sigma \coloneqq \left\{ \btau \in H(\operatorname{divDiv}, \Omega ; \mathbb{S}): (\operatorname{divDiv}  \btau, v) = (\btau, \nabla^2v ),\forall v \in H_E^2(\Omega)\right\}.
	\end{equation}
	Let $V \coloneqq L^2(\Omega)$. Then the new mixed variational formulation reads: Find $(\bsig, u) \in \Sigma \times L^6(\Omega)$ such that
	\begin{equation}
		\label{mvar2}
		\left\{  
		\begin{aligned}
			(\frac{\partial u}{\partial t},v) + (\operatorname{divDiv} \bsig, v) &= 0,\quad& &\forall v \in V,&\\
			(\bsig, \btau)-(\operatorname{divDiv}\btau,u)&= (-\frac{1}{\varepsilon^{2}}f(u)\I,\btau),\quad&& \forall\btau \in \Sigma. &
		\end{aligned}    \right.
	\end{equation}
	
	\begin{remark}
		The requirement $u \in L^6(\Omega)$ is reasonable since $f(u) = u^3-u \in L^2(\Omega)$ requires at least $u \in L^6(\Omega)$. Indeed, the exact solution $u$ of \eqref{classicalvar} belongs to $H^2(\Omega)$, implying $u \in L^6(\Omega)$.
	\end{remark}
	
	The following two lemmas give the boundary conditions for smooth functions in $\Sigma$. Define the jump term for $F_1, F_2 \in \mathcal{F}(\Omega)$ and $F_1\cap F_2 = e$ as
	$$ \llbracket \boldsymbol{n}_{F, e}^{\mathrm{T}} \btau \boldsymbol{n}_F\rrbracket_{e} \coloneqq  \boldsymbol{n}_{{F_1}, e}^{\mathrm{T}} \btau \boldsymbol{n}_{F_1} + \boldsymbol{n}_{{F_2}, e}^{\mathrm{T}} \btau \boldsymbol{n}_{F_2}.
	$$
	
	\begin{lemma}
		\label{3dboundary}
		Suppose $\Omega \subset \mathbb{R}^3$. 
		If $\btau \in  H(\operatorname{divDiv}, \Omega ; \mathbb{S}) \cap C^1(\bar \Omega; \mathbb{S})$, then $\btau \in \Sigma$ if and only if 
		\begin{equation}
			\label{3Dboundary}
			\begin{aligned}
				\llbracket \boldsymbol{n}_{F, e}^{\mathrm{T}} \btau \boldsymbol{n}_F\rrbracket_{e} = 0,\quad &\forall e\in \mathcal{E}(\Omega), \\
				2\operatorname{div}_{F}(\btau\boldsymbol{n}_{F})+ \frac{\partial\left(\boldsymbol{n}_F^{\mathrm{T}} \btau \boldsymbol{n}_F\right)}{\partial \boldsymbol{n}_F} =0, \quad &\forall F \in \mathcal{F}(\Omega).
			\end{aligned}
		\end{equation}
	\end{lemma}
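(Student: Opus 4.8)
The plan is to read off the boundary conditions from the second Green identity \eqref{Green2} by testing against functions in $H_E^2(\Omega)$. For $v\in H_E^2(\Omega)$ one has $\partial v/\partial\boldsymbol{n}_F=0$ on every $F\in\mathcal{F}(\Omega)$, so the term $\left(\boldsymbol{n}_F^{\topt}\btau\boldsymbol{n}_F,\partial v/\partial\boldsymbol{n}_F\right)_F$ disappears from \eqref{Green2}. Collecting the double sum $\sum_{F}\sum_{e\in\mathcal{E}(F)}$ into a sum over $e\in\mathcal{E}(\Omega)$---each edge is shared by exactly the two faces appearing in the jump, and $\btau\in C^1(\bar\Omega;\mathbb{S})$ has a single-valued trace on $e$---the identity becomes
\begin{align*}
(\operatorname{divDiv}\btau, v) - (\btau, \nabla^2 v) &= \sum_{F\in\mathcal{F}(\Omega)}\Bigl(2\operatorname{div}_F(\btau\boldsymbol{n}_F) + \frac{\partial(\boldsymbol{n}_F^{\topt}\btau\boldsymbol{n}_F)}{\partial\boldsymbol{n}_F},\, v\Bigr)_F \\
&\quad - \sum_{e\in\mathcal{E}(\Omega)}\bigl(\llbracket \boldsymbol{n}_{F,e}^{\topt}\btau\boldsymbol{n}_F\rrbracket_e,\, v\bigr)_e
\end{align*}
for all $v\in H_E^2(\Omega)$. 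By the definition \eqref{Sigma} of $\Sigma$, the membership $\btau\in\Sigma$ is precisely the vanishing of the right-hand side for every such $v$; hence the ``if'' direction is immediate once \eqref{3Dboundary} is assumed.

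For the ``only if'' direction, suppose $\btau\in\Sigma$, so the right-hand side above is zero for all $v\in H_E^2(\Omega)$. Fix a face $F_0$ and test with functions of the form $v=\phi\,\chi(d)$, where $\phi$ ranges over smooth functions compactly supported in the relative interior of $F_0$, $d$ denotes the distance to $F_0$ measured into $\Omega$, and $\chi$ is a one-dimensional cutoff with $\chi(0)=1$, $\chi'(0)=0$ and arbitrarily small support. Such $v$ belongs to $H_E^2(\Omega)$ and is supported near the interior of $F_0$, so it annihilates every edge integral and every face integral except the one over $F_0$. Since $\btau\in C^1(\bar\Omega;\mathbb{S})$, the quantity $2\operatorname{div}_{F_0}(\btau\boldsymbol{n}_{F_0})+\partial(\boldsymbol{n}_{F_0}^{\topt}\btau\boldsymbol{n}_{F_0})/\partial\boldsymbol{n}_{F_0}$ is continuous on $F_0$, and the fundamental lemma of the calculus of variations forces it to vanish there; as $F_0$ was arbitrary, the second equation of \eqref{3Dboundary} follows.

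With the face contributions eliminated, the identity reduces to $\sum_{e\in\mathcal{E}(\Omega)}\bigl(\llbracket \boldsymbol{n}_{F,e}^{\topt}\btau\boldsymbol{n}_F\rrbracket_e, v\bigr)_e=0$ for all $v\in H_E^2(\Omega)$. Now fix an edge $e_0=F_1\cap F_2$, work in a wedge neighbourhood of its relative interior with coordinates $(s,d_1,d_2)$, where $s$ is arclength along $e_0$ and $d_i$ is the distance into $\Omega$ from $F_i$, and take $v=\phi(s)\,\chi(d_1)\,\chi(d_2)$ with $\phi$ smooth and compactly supported in the interior of $e_0$ and $\chi$ as above. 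The conditions $\chi'(0)=0$ ensure $\partial_n v=0$ on $F_1$ and $F_2$, so $v\in H_E^2(\Omega)$, and $v$ is supported near the interior of $e_0$, isolating the single term $\bigl(\llbracket \boldsymbol{n}_{F,e_0}^{\topt}\btau\boldsymbol{n}_F\rrbracket_{e_0},\phi\bigr)_{e_0}=0$. Continuity of the jump and the fundamental lemma then give $\llbracket \boldsymbol{n}_{F,e_0}^{\topt}\btau\boldsymbol{n}_F\rrbracket_{e_0}=0$, and letting $e_0$ vary over $\mathcal{E}(\Omega)$ yields the first equation of \eqref{3Dboundary}. The main technical obstacle is exactly this construction of localized test functions inside $H_E^2(\Omega)$: it is elementary near the interior of a face, but near an edge it requires the transverse cutoffs to have vanishing first derivative on each adjacent face so that the homogeneous Neumann constraint defining $H_E^2(\Omega)$ is respected.
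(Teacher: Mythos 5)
Your overall route is exactly the paper's: restrict Green's identity \eqref{Green2} to $v\in H_E^2(\Omega)$ so that the $\partial v/\partial\boldsymbol{n}_F$ term drops, regroup the edge sums into jumps, and then argue in both directions from the resulting identity (the paper's \eqref{Green10}). The paper simply invokes ``the arbitrariness of $v$'' for the only-if direction; you attempt to make that localization explicit, which is the right instinct, but your edge test function is flawed.

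Concretely, for $v=\phi(s)\,\chi(d_1)\,\chi(d_2)$ the normal derivative on $F_1$ is $\partial_{\boldsymbol{n}_{F_1}}v=\phi(s)\,\chi(d_1)\,\chi'(d_2)\,(\nabla d_2\cdot\boldsymbol{n}_{F_1})$ after the $\chi'(d_1)=\chi'(0)=0$ term is discarded, and $\nabla d_2\cdot\boldsymbol{n}_{F_1}=-\boldsymbol{n}_{F_2}\cdot\boldsymbol{n}_{F_1}=\cos\theta$, where $\theta$ is the dihedral angle along $e_0$. On the strip of $F_1$ where $d_2$ lies in the support of $\chi'$ this does not vanish unless $\theta=\pi/2$, so for a general polyhedron (already for a tetrahedron) your $v$ is \emph{not} in $H_E^2(\Omega)$, and the isolation of the edge term is not justified; worse, the spurious $\partial_n v$ trace would reintroduce the $(\boldsymbol{n}_F^{\topt}\btau\boldsymbol{n}_F,\partial v/\partial\boldsymbol{n}_F)_F$ term you discarded from \eqref{Green2}. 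The gap is easily repaired: take instead $v=\phi(s)\,\rho(r)$, where $r$ is the distance to the line containing $e_0$ and $\rho$ is a cutoff with $\rho\equiv1$ near $0$ and small support. Since $e_0$ lies in the plane of each adjacent face, $\nabla r$ is tangential to $F_1$ and $F_2$ there, so $\partial_n v=0$ on both faces, $v$ is smooth and supported near the relative interior of $e_0$, and $v|_{e_0}=\phi$ is arbitrary; the fundamental lemma then gives the vanishing of the jump as you intended. Your face-interior construction and the if-direction are fine.
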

	
	\begin{proof}
		For any $\btau \in  H(\operatorname{divDiv}, \Omega ; \mathbb{S}) \cap C^1(\bar \Omega; \mathbb{S})$, Green's identity \eqref{Green2} shows
		\begin{equation}
			\begin{aligned}
				\label{Green10}
				(\operatorname{divDiv}\btau, v) & =\left(\btau, \nabla^2 v\right) -\sum_{F \in \mathcal{F}(\Omega)} \sum_{e \in \mathcal{E}(F)}\left(\boldsymbol{n}_{F, e}^{\mathrm{T}} \btau \boldsymbol{n}_F, v\right)_e \\
				& \quad + \sum_{F \in \mathcal{F}(\Omega) }  \left(2 \operatorname{div}_F\left(\btau \boldsymbol{n}_F\right)+\frac{\partial\left(\boldsymbol{n}_F^{\mathrm{T}} \btau \boldsymbol{n}_F\right)}{\partial \boldsymbol{n}_F}, v \right)_F,
			\end{aligned}
		\end{equation}
		for all $v\in H^2_E(\Omega)$.
		If $\btau \in \Sigma$, then the arbitrariness of $v$ and \eqref{Sigma} show that the boundary terms in \eqref{Green10} equal zero, which implies \eqref{3Dboundary}. On the contrary, if \eqref{3Dboundary} holds, this and \eqref{Green10} lead to $\boldsymbol{\tau} \in \Sigma$. This concludes the proof.
	\end{proof}
	
	Define the jump term for $e_1,e_2 \in \mathcal{E}(\Omega)$ and $e_1 \cap e_2 = \boldsymbol{x}$ as
	$$ 
	\llbracket \boldsymbol{t}^{T} \btau \boldsymbol{n} \rrbracket_{\boldsymbol{x}} \coloneqq \operatorname{sign}_{e_{1}, \boldsymbol{x}} \boldsymbol{t}_{e_1}^{T} \btau \boldsymbol{n}_{e_1} + \operatorname{sign}_{e_{2}, \boldsymbol{x}}\boldsymbol{t}_{e_2}^{T} \btau \boldsymbol{n}_{e_2}
	$$
	with
	$$
	\mathrm{sign}_{e,\boldsymbol{x}}\coloneqq
	\left\{
	\begin{aligned}
		1,&\mathrm{~if~} \boldsymbol{x}\text{ is the end point of }e,&\\
		-1,&\mathrm{~if~} \boldsymbol{x}\text{ is the start point of }e.&
	\end{aligned}
	\right.
	$$
	\begin{lemma}
		\label{2dboundary}
	Suppose $\Omega \subset \mathbb{R}^2$. 
	If $\btau \in  H(\operatorname{divDiv}, \Omega ; \mathbb{S}) \cap C^1(\bar \Omega; \mathbb{S})$, then $\btau \in \Sigma$ 
	if and only if 
		\begin{equation}
			\label{H22D}
			\llbracket \boldsymbol{t}^{\mathrm{T}} \btau \boldsymbol{n} \rrbracket_{\boldsymbol{x}} = 0, \, \forall \boldsymbol{x} \in \mathcal{V}(\Omega),\quad 
			2 \frac{\partial\left(\boldsymbol{t}_e^{\mathrm{T}} \btau \boldsymbol{n}_e\right)}{\partial \boldsymbol{t}_e}  + \frac{\partial\left(\boldsymbol{n}_e^{\mathrm{T}} \btau \boldsymbol{n}_e\right)}{\partial \boldsymbol{n}_e}=0, \, \forall e \in \mathcal{E}(\Omega).
		\end{equation}
	\end{lemma}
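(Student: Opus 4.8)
The plan is to mirror the proof of Lemma~\ref{3dboundary}, with faces replaced by edges and edges by vertices. The first step is to record the two-dimensional analogue of Green's identity \eqref{Green2}, obtained by the same manipulations: integrate by parts twice, then on each $e\in\mathcal{E}(\Omega)$ split $\nabla v = \frac{\partial v}{\partial\boldsymbol{n}_e}\,\boldsymbol{n}_e + \frac{\partial v}{\partial\boldsymbol{t}_e}\,\boldsymbol{t}_e$ (the edge is a segment, so $\boldsymbol{n}_e,\boldsymbol{t}_e$ are constant on $e$) and integrate the tangential part by parts along $e$. Since $\boldsymbol{n}_e^{\mathrm{T}}\operatorname{Div}\btau = \operatorname{div}(\btau\boldsymbol{n}_e) = \frac{\partial(\boldsymbol{n}_e^{\mathrm{T}}\btau\boldsymbol{n}_e)}{\partial\boldsymbol{n}_e} + \frac{\partial(\boldsymbol{t}_e^{\mathrm{T}}\btau\boldsymbol{n}_e)}{\partial\boldsymbol{t}_e}$ on a flat edge, collecting terms gives, for $\btau\in H(\operatorname{divDiv},\Omega;\mathbb{S})\cap C^1(\bar\Omega;\mathbb{S})$ and $v\in H^2(\Omega)$,
\[
\begin{aligned}
(\operatorname{divDiv}\btau,v) = {}&(\btau,\nabla^2 v) - \sum_{\boldsymbol{x}\in\mathcal{V}(\Omega)} \llbracket \boldsymbol{t}^{\mathrm{T}}\btau\boldsymbol{n}\rrbracket_{\boldsymbol{x}}\, v(\boldsymbol{x}) \\
&- \sum_{e\in\mathcal{E}(\Omega)}\left[ \Bigl(\boldsymbol{n}_e^{\mathrm{T}}\btau\boldsymbol{n}_e,\tfrac{\partial v}{\partial\boldsymbol{n}_e}\Bigr)_e - \Bigl(2\tfrac{\partial(\boldsymbol{t}_e^{\mathrm{T}}\btau\boldsymbol{n}_e)}{\partial\boldsymbol{t}_e} + \tfrac{\partial(\boldsymbol{n}_e^{\mathrm{T}}\btau\boldsymbol{n}_e)}{\partial\boldsymbol{n}_e}, v\Bigr)_e\right],
\end{aligned}
\]
where the endpoint contributions from the edgewise integration by parts combine, through the convention $\operatorname{sign}_{e,\boldsymbol{x}}$ and the fact that every vertex of a polygon lies on exactly two edges, into the jump $\llbracket\boldsymbol{t}^{\mathrm{T}}\btau\boldsymbol{n}\rrbracket_{\boldsymbol{x}}$ at each vertex.

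Next I would restrict to $v\in H^2_E(\Omega)$, so that $\partial v/\partial\boldsymbol{n}_e = 0$ on every edge and the term $(\boldsymbol{n}_e^{\mathrm{T}}\btau\boldsymbol{n}_e,\partial v/\partial\boldsymbol{n}_e)_e$ drops. If \eqref{H22D} holds, both remaining boundary sums vanish identically, whence $(\operatorname{divDiv}\btau,v) = (\btau,\nabla^2 v)$ for all $v\in H^2_E(\Omega)$, i.e.\ $\btau\in\Sigma$ by \eqref{Sigma}. For the converse, $\btau\in\Sigma$ gives
\[
\sum_{\boldsymbol{x}\in\mathcal{V}(\Omega)} \llbracket\boldsymbol{t}^{\mathrm{T}}\btau\boldsymbol{n}\rrbracket_{\boldsymbol{x}}\, v(\boldsymbol{x}) = \sum_{e\in\mathcal{E}(\Omega)}\Bigl(2\tfrac{\partial(\boldsymbol{t}_e^{\mathrm{T}}\btau\boldsymbol{n}_e)}{\partial\boldsymbol{t}_e} + \tfrac{\partial(\boldsymbol{n}_e^{\mathrm{T}}\btau\boldsymbol{n}_e)}{\partial\boldsymbol{n}_e}, v\Bigr)_e , \quad \forall v\in H^2_E(\Omega),
\]
and I would extract the two families of conditions separately. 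First test against $v\in H^2_E(\Omega)$ that vanish at all vertices: then the vertex sum drops, while $v|_e$ ranges over a dense subset of $L^2(e)$ (the Dirichlet trace and the normal-derivative trace of an $H^2$ function being independently prescribable), so continuity of $\btau$ forces $2\partial(\boldsymbol{t}_e^{\mathrm{T}}\btau\boldsymbol{n}_e)/\partial\boldsymbol{t}_e + \partial(\boldsymbol{n}_e^{\mathrm{T}}\btau\boldsymbol{n}_e)/\partial\boldsymbol{n}_e = 0$ on each $e$. With these edge identities in hand the right-hand side above vanishes, and testing against $v$ with prescribed vertex values (and vanishing normal derivative) yields $\llbracket\boldsymbol{t}^{\mathrm{T}}\btau\boldsymbol{n}\rrbracket_{\boldsymbol{x}} = 0$ at every vertex, which is \eqref{H22D}.

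The step I expect to be the main obstacle is the first one: verifying that the edgewise integration by parts reassembles exactly into the vertex jump terms with the stated sign convention, and that the reorganization of $\boldsymbol{n}_e^{\mathrm{T}}\operatorname{Div}\btau$ together with the tangential integration by parts produces the coefficient $2$ in front of $\partial(\boldsymbol{t}_e^{\mathrm{T}}\btau\boldsymbol{n}_e)/\partial\boldsymbol{t}_e$ --- this is the planar mirror of the computation behind \eqref{Green2}, and is where a sign or a factor could slip. A lesser point is the construction of $H^2_E(\Omega)$ test functions with prescribed Dirichlet data (or vertex values) and vanishing normal derivative, which I would obtain from surjectivity of the $H^2$ trace operator together with a cutoff near $\partial\Omega$.
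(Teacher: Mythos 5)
Your proposal is correct and follows essentially the same route as the paper: the paper's proof simply mirrors that of Lemma~\ref{3dboundary} using the two-dimensional Green's identity cited from \cite[Lemma 4.2]{chen2022finite}, which is exactly the identity (with the correct factor $2$ and vertex-jump signs) that you derive by hand before restricting to $v\in H^2_E(\Omega)$ and invoking the arbitrariness of the test function. Your extra detail on separating the edge and vertex conditions via suitable test functions is a fleshed-out version of the "arbitrariness of $v$" step that the paper leaves implicit.
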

	\begin{proof}
	The proof is similar to that of Lemma \ref{3dboundary} by using Green's identity in two dimensions \cite[Lemma 4.2]{chen2022finite}.
	\end{proof}
	
	The subsequent theorem shows the equivalence of the mixed formulation \eqref{mvar2} and the primal one \eqref{classicalvar}.
	\begin{theorem}
		The problems \eqref{classicalvar} and \eqref{mvar2} are fully equivalent, i.e., if $u \in H_E^2(\Omega)$ solves \eqref{classicalvar}, then $\bsig=\nabla^2 u -\ep f(u)\I \in \Sigma$ and $(\bsig,u)$ solves \eqref{mvar2}. And, vice versa, if $(\bsig,u) \in \Sigma \times V$ solves \eqref{mvar2}, then $u \in H_E^2(\Omega)$ and $u$ solves \eqref{classicalvar}.
	\end{theorem}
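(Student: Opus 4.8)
The plan is to prove the two implications of the equivalence separately. The implication \eqref{classicalvar}$\,\Rightarrow\,$\eqref{mvar2} is a direct verification, whereas \eqref{mvar2}$\,\Rightarrow\,$\eqref{classicalvar} requires recovering the $H^2$-regularity of $u$ and the boundary condition $\partial_n u = 0$ from the variational identities.

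For the forward implication, suppose $u \in H_E^2(\Omega)$ solves \eqref{classicalvar} and set $\bsig := \nabla^2 u - \ep f(u)\I$. Since $u \in H^2(\Omega) \hookrightarrow L^6(\Omega)$ we have $f(u) = u^3 - u \in L^2(\Omega)$, so $\bsig \in L^2(\Omega;\mathbb{S})$. Testing \eqref{classicalvar} with $v \in C_c^\infty(\Omega) \subset H_E^2(\Omega)$ and integrating by parts gives $\langle \operatorname{divDiv}\bsig, v\rangle = (\bsig, \nabla^2 v) = -(\partial_t u, v)$, hence $\operatorname{divDiv}\bsig = -\partial_t u \in L^2(\Omega)$ and $\bsig \in H(\operatorname{divDiv},\Omega;\mathbb{S})$; using \eqref{classicalvar} for arbitrary $v \in H_E^2(\Omega)$ then yields $(\operatorname{divDiv}\bsig, v) = -(\partial_t u, v) = (\bsig, \nabla^2 v)$, so $\bsig \in \Sigma$ by \eqref{Sigma}. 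The first equation of \eqref{mvar2} is immediate from $\operatorname{divDiv}\bsig = -\partial_t u$, and the second follows because for $\btau \in \Sigma$ and $u \in H_E^2(\Omega)$ the definition \eqref{Sigma} gives $(\operatorname{divDiv}\btau, u) = (\btau, \nabla^2 u)$, so that $(\bsig,\btau) - (\operatorname{divDiv}\btau, u) = (\nabla^2 u - \ep f(u)\I,\btau) - (\btau, \nabla^2 u) = -\ep(f(u)\I,\btau)$.

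For the converse, let $(\bsig, u) \in \Sigma \times L^6(\Omega)$ solve \eqref{mvar2}, so that $f(u) \in L^2(\Omega)$. I would first note that $C_c^\infty(\Omega;\mathbb{S}) \subset \Sigma$ (all boundary terms vanish upon integration by parts), so the second equation of \eqref{mvar2} reads $\langle \nabla^2 u, \btau\rangle = (\bsig + \ep f(u)\I,\btau)$ for every such $\btau$; hence the distributional Hessian of $u$ equals $\bsig + \ep f(u)\I \in L^2(\Omega;\mathbb{S})$, which gives both $\bsig = \nabla^2 u - \ep f(u)\I$ and, since $u \in L^2(\Omega)$ has all second-order distributional derivatives in $L^2$, the regularity $u \in H^2(\Omega)$ on the bounded Lipschitz domain $\Omega$ (by a classical result of Ne\v{c}as). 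Next, the second equation of \eqref{mvar2} tested against \emph{all} $\btau \in \Sigma$ gives $(\operatorname{divDiv}\btau, u) = (\nabla^2 u,\btau)$ for every $\btau \in \Sigma$. Restricting to $\btau \in \Sigma \cap C^1(\bar\Omega;\mathbb{S})$ and applying Green's identity \eqref{Green2} (resp. its two-dimensional analogue), the edge-jump terms and the face terms involving $2\operatorname{div}_F(\btau\boldsymbol{n}_F) + \partial_{\boldsymbol{n}_F}(\boldsymbol{n}_F^{\mathrm{T}}\btau\boldsymbol{n}_F)$ vanish by the characterization of $\Sigma$ in Lemma \ref{3dboundary} (resp. Lemma \ref{2dboundary}), leaving $(\operatorname{divDiv}\btau, u) = (\btau,\nabla^2 u) - \sum_{F \in \mathcal{F}(\Omega)}(\boldsymbol{n}_F^{\mathrm{T}}\btau\boldsymbol{n}_F, \partial_n u)_F$; hence $\sum_{F}(\boldsymbol{n}_F^{\mathrm{T}}\btau\boldsymbol{n}_F, \partial_n u)_F = 0$. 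Choosing $\btau = \Phi\I$ with $\Phi \in C^\infty(\bar\Omega)$ and $\partial_n\Phi = 0$ on $\partial\Omega$ --- which lies in $\Sigma$ since then $\boldsymbol{n}_{F,e}^{\mathrm{T}}\btau\boldsymbol{n}_F = 0$ and $\operatorname{div}_F(\btau\boldsymbol{n}_F) = 0$, so that \eqref{3Dboundary} (resp. \eqref{H22D}) holds --- this collapses to $(\Phi, \partial_n u)_{\partial\Omega} = 0$, and the density of the traces $\{\Phi|_{\partial\Omega}\}$ in $L^2(\partial\Omega)$ forces $\partial_n u = 0$, i.e. $u \in H_E^2(\Omega)$. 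Finally, for any $v \in H_E^2(\Omega) \subset V$, the first equation of \eqref{mvar2} with $\bsig \in \Sigma$ gives $(\partial_t u, v) = -(\operatorname{divDiv}\bsig, v) = -(\bsig, \nabla^2 v) = -(\nabla^2 u, \nabla^2 v) + (\ep f(u)\I, \nabla^2 v)$, which is exactly \eqref{classicalvar}.

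The delicate part is the boundary recovery in the converse: I expect the main obstacles to be (i) the regularity lift from ``$u,\nabla^2 u \in L^2$'' to ``$u \in H^2(\Omega)$'' on a merely Lipschitz domain, and (ii) exhibiting enough smooth test tensors in $\Sigma$ --- realized above through the ansatz $\Phi\I$ --- together with the density in $L^2(\partial\Omega)$ of the corresponding boundary data; the verification that $\Phi\I$ (with $\partial_n\Phi=0$) genuinely belongs to $\Sigma$ hinges on the explicit trace descriptions of Lemmas \ref{3dboundary} and \ref{2dboundary}, hence on the flatness of the faces and edges of the polytope $\Omega$. The remaining manipulations are routine bookkeeping with Green's identity \eqref{Green1}--\eqref{Green2} and the definition \eqref{Sigma} of $\Sigma$.
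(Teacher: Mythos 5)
Your proof is correct and follows essentially the same route as the paper: the forward direction via $C_0^\infty\subset H_E^2$ and the definition of $\Sigma$, and the converse via $C_0^\infty(\Omega;\mathbb{S})\subset\Sigma$, the Ne\v{c}as-type regularity lift to $u\in H^2(\Omega)$, and Green's identity combined with Lemmas \ref{3dboundary}--\ref{2dboundary} to recover $\partial_n u=0$. Your explicit test tensor $\Phi\I$ with $\partial_n\Phi=0$ and the trace-density argument merely spell out the final step that the paper compresses into a single sentence, and that verification is sound.
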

	\begin{proof}
		Suppose that $u \in H_E^2(\Omega)$ solves \eqref{classicalvar}. Then $\bsig= \nabla^2 u -\ep f(u)\I \in L^2(\Omega; \mathbb{S})$ and
		\begin{equation}
			\label{divDivUtt}
			( \frac{\partial u}{\partial t} , v ) +  ( \bsig, \nabla^2 v ) = 0, \quad \forall v \in H_E^2(\Omega).
		\end{equation}
		This and $C_0^{\infty}(\Omega) \subset H^2_E(\Omega)$ lead to $\operatorname{divDiv} \bsig = -  \frac{\partial u}{\partial t} \in L^2(\Omega)$. The first row in \eqref{mvar2} immediately follows. This and \eqref{divDivUtt} yield
		$$(\bsig,\nabla^2 v) = (-  \frac{\partial u}{\partial t} , v ) = (\operatorname{divDiv} \bsig, v), \quad \forall v \in H^2_E(\Omega).$$
		This shows $\bsig \in \Sigma$. The definition of $\Sigma$ and $u \in H_E^2(\Omega)$ imply
		$$
		( \operatorname{divDiv} \btau, u )= ( \btau, \nabla^2 u) = ( \btau, \bsig ) {+}  (\ep f(u) \I,  \btau), \quad \forall \btau \in \Sigma,
		$$
		which proves the second row in \eqref{mvar2}. 
		
		Suppose that $(\bsig,u) \in \Sigma \times L^6(\Omega)$ solves \eqref{mvar2}. Since $\bsig+\frac{1}{\varepsilon^{2}}f(u)\I \in L^2(\Omega; \mathbb{S})$ and $C_0^{\infty}(\Omega; \mathbb{S}) \subset \Sigma$, it follows from the second row of \eqref{mvar2} that $\nabla^2 u = \bsig+\frac{1}{\varepsilon^{2}}f(u)\I \in L^2(\Omega;\mathbb{S})$. This and Ne\v{c}as's inequality {\cite[Theorem IV.1.1.]{boyer2012mathematical}}
        $$
        \|\nabla u\|_{L^2} \leq C\left(\| u\|_{L^2 }+\|\nabla^2 u\|_{H^{-1}}\right)
        $$
        imply $u \in H^2(\Omega)$. The choice of $v \in H^2_E(\Omega)$ in the fist row of \eqref{mvar2} derives that $u$ solves \eqref{classicalvar} since
		$$
		\begin{aligned}
			(\frac{\partial u}{\partial t}, v) &= - (\operatorname{divDiv} \bsig, v) =-(\bsig, \nabla^2 v) \\
			&= -(\nabla^2 u, \nabla^2 v) + (\frac{1}{\varepsilon^2}f(u)\I,v), \quad \forall v\in H^2_E(\Omega).
		\end{aligned}
		$$
		It remains to show $u \in H^2_E(\Omega)$. 
		The second row in \eqref{mvar2} implies
		\begin{equation*}
		(\nabla^2 u, \btau) = (\operatorname{divDiv}\btau,u)
		\end{equation*}
		for any $\btau \in C^{1}(\bar{\Omega};\mathbb{S}) \cap \Sigma$. 
		This, Green's identity in Lemma \ref{Green}, and Lemma \ref{3dboundary} show ${\partial_{\boldsymbol{n}}} u =0 $ for $\Omega \subsett \mathbb{R}^3$. Similarly, Green's identity in \cite[Lemma 4.2]{chen2022finite} and Lemma \ref{2dboundary} show ${\partial_{\boldsymbol{n}}} u =0 $ for $\Omega \subsett \mathbb{R}^2$.
		This concludes the proof.
	\end{proof}
	
	Recall $\Pi_{F}$ from \eqref{defoperator}. This paper will use the following more sufficient boundary conditions compared with those in \eqref{3Dboundary} and \eqref{H22D} for the mixed finite element method.
	
	\begin{lemma}
		\label{sufficientbd}
		Suppose $\Omega \subset \mathbb{R}^3$.  If $\btau \in  H(\operatorname{divDiv}, \Omega ; \mathbb{S}) \cap C^1(\bar \Omega; \mathbb{S})$ satisfies
		\begin{equation}
			\label{3dboundarysuffi}
			\Pi_{F} \left( \btau\boldsymbol{n}_{F} \right)=0,~ \boldsymbol{n}_{F}^{\topt}\operatorname{Div} \btau=0, ~\forall F \in \mathcal{F} (\Omega),
		\end{equation}
		then $\btau \in \Sigma$. Similarly, for $\Omega \subset \mathbb{R}^2$, if $\btau \in  H(\operatorname{divDiv}, \Omega ; \mathbb{S}) \cap C^1(\bar \Omega; \mathbb{S})$ satisfies 
		$\boldsymbol{t}_e^{\mathrm{T}} \btau \boldsymbol{n}_e = 0$ {and} $\boldsymbol{n}_e^{\mathrm{T}}\operatorname{Div} \btau  = 0$ for all $e\in \mathcal{E}(\Omega)
		$, then $\btau \in \Sigma$.
	\end{lemma}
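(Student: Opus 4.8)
The plan is to verify the membership $\btau\in\Sigma$ directly from the definition \eqref{Sigma}, i.e.\ to show that $(\operatorname{divDiv}\btau,v)=(\btau,\nabla^2 v)$ for every $v\in H_E^2(\Omega)$, by applying Green's identity once and checking that every boundary contribution drops out. Since $\btau\in H(\operatorname{divDiv},\Omega;\mathbb{S})\cap C^1(\bar\Omega;\mathbb{S})$, Lemma \ref{Green} is available, and \eqref{Green1} gives, for any $v\in H_E^2(\Omega)$,
\begin{equation*}
(\operatorname{divDiv}\btau,v)=(\btau,\nabla^2 v)-\sum_{F\in\mathcal{F}(\Omega)}(\btau\boldsymbol{n}_F,\nabla v)_F+\sum_{F\in\mathcal{F}(\Omega)}(\boldsymbol{n}_F^{\topt}\operatorname{Div}\btau,v)_F .
\end{equation*}
The last sum vanishes at once because $\boldsymbol{n}_F^{\topt}\operatorname{Div}\btau=0$ on each face. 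For the middle sum I would split $\nabla v$ on each face into its tangential and normal parts, $\nabla v=\Pi_F\nabla v+(\partial_n v)\boldsymbol{n}_F$; the normal part drops since $v\in H_E^2(\Omega)$ gives $\partial_n v=0$ on $\partial\Omega$, so $(\btau\boldsymbol{n}_F,\nabla v)_F=(\btau\boldsymbol{n}_F,\Pi_F\nabla v)_F$. Using that $\Pi_F=\I-\boldsymbol{n}_F\boldsymbol{n}_F^{\topt}$ is symmetric and idempotent, this equals $(\Pi_F(\btau\boldsymbol{n}_F),\Pi_F\nabla v)_F$, which is zero by the first condition in \eqref{3dboundarysuffi}. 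Hence $(\operatorname{divDiv}\btau,v)=(\btau,\nabla^2 v)$ for all $v\in H_E^2(\Omega)$, i.e.\ $\btau\in\Sigma$.

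For the two-dimensional statement I would repeat the argument verbatim, using the 2D Green's identity of \cite[Lemma 4.2]{chen2022finite} in place of \eqref{Green1}: the edge sum of the terms $(\boldsymbol{n}_e^{\topt}\operatorname{Div}\btau,v)_e$ vanishes by $\boldsymbol{n}_e^{\topt}\operatorname{Div}\btau=0$, and on $\partial\Omega$ the gradient $\nabla v$ is purely tangential (again from $\partial_n v=0$), so on each edge $(\btau\boldsymbol{n}_e,\nabla v)_e=\big((\boldsymbol{t}_e^{\topt}\btau\boldsymbol{n}_e)\boldsymbol{t}_e,\nabla v\big)_e$, which vanishes by $\boldsymbol{t}_e^{\topt}\btau\boldsymbol{n}_e=0$.

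I do not expect a substantial obstacle here: the argument is a single application of Green's identity combined with a tangential--normal decomposition on the boundary. The one point needing care is that, after using $\partial_n v=0$, the surviving tangential part of $\nabla v$ must be paired only against the tangential part $\Pi_F(\btau\boldsymbol{n}_F)$ of $\btau\boldsymbol{n}_F$, so that the normal--normal component $\boldsymbol{n}_F^{\topt}\btau\boldsymbol{n}_F$ --- which \eqref{3dboundarysuffi} does not constrain --- never contributes. This is also why \eqref{3dboundarysuffi} is genuinely stronger (``more sufficient'') than \eqref{3Dboundary}: one can check that $\Pi_F(\btau\boldsymbol{n}_F)=0$ on every face forces both the edge jump $\llbracket\boldsymbol{n}_{F,e}^{\topt}\btau\boldsymbol{n}_F\rrbracket_e$ and $\operatorname{div}_F(\btau\boldsymbol{n}_F)$ to vanish, and together with $\boldsymbol{n}_F^{\topt}\operatorname{Div}\btau=0$ this recovers \eqref{3Dboundary}, but the direct verification above is shorter and more transparent.
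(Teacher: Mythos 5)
Your argument is correct and is exactly the paper's proof: substitute the boundary conditions into Green's identity \eqref{Green1} (resp.\ its 2D analogue) and observe that the $\operatorname{Div}$-trace term vanishes directly while the $(\btau\boldsymbol{n}_F,\nabla v)_F$ term vanishes after the tangential--normal splitting of $\nabla v$ using $\partial_n v=0$. The paper states this in one line; you have merely written out the details, so there is nothing to add.
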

	
	\begin{proof}
		Substituting \eqref{3dboundarysuffi} into Green's identity \eqref{Green1} proves $\btau \in \Sigma$ for $\Omega \subset \mathbb{R}^3$. Similar arguments suit for $\Omega \subset \mathbb{R}^2$.
	\end{proof}

	\begin{remark}
	% The conditions in Lemma \ref{sufficientbd} are reasonable due to the exact solution $(\bsig,u)$ satisfies those conditions as well.
        {The conditions in Lemma \ref{sufficientbd} are consistent
        with the boundary conditions of the exact solution $(\bsig,u)$ of the mixed formulation \eqref{mvar2}}.
	\end{remark}

	%==============================================
	% ======== 4 Mixed finite element method =================
	% =============================================
	
	\section{Mixed finite element method}
	\label{sec4}
	
	This section establishes the linearized fully discrete scheme of \eqref{mvar2} and presents the error estimates.
	
	\subsection{Linearized fully discrete scheme}
	
	Let $N$ be a positive integer and let $\{t^n\}_{n=0}^{N}$ constitute a uniform partition of $[0, T]$ with the step size $\tau= \frac{T}{N}$. Select the backward Euler method as the temporal discretization method and incorporate the nonlinear term in the last time step. Then, the first-order semi-implicit method reads: Find $(\hat{\bsig}^n, \hat{u}^n) \in \Sigma \times L^6(\Omega)$ such that for $n=1,2,\cdots,N$,
	\begin{equation*}
		\left\{   
		\begin{aligned}
			\frac{1}{\tau}(\hat{u}^{n}-\hat{u}^{n-1},v) + (\operatorname{divDiv} \hat{\bsig}^{n}, v)&= 0,  &&\forall  v \in V,& \\
			(\hat{\bsig}^{n}, \btau) -(\operatorname{divDiv}\btau,\hat{u}^{n}) &= (-\frac{1}{\varepsilon^{2}}f(\hat{u}^{n-1})\I,\btau), && \forall \btau \in  \Sigma.&
		\end{aligned}\right.
	\end{equation*}
	At the initial time step, let $\hat{u}^0(\boldsymbol{x},0)=u_0(\boldsymbol{x})$.
	
	Suppose that $\mathcal{T}_h$ is a shape regular and quasi-uniform subdivision of $\Omega$ consisting of triangles in two dimensions and tetrahedrons in three dimensions. Define $h$ as the maximum of the diameters of all the elements $K \in \mathcal{T}_h$. 
	The jump of $u$ across an interior $d-1$ face $G$ shared by neighboring elements $K_{+}$ and $K_{-}$ is defined by
	$[u]_G \coloneqq ( \left.u \right|_{K_{+}}-\left.u\right|_{K_{-}} )\left. \right|_{G}.$
	When it comes to any boundary face $G \subset \partial \Omega$, the jump $[\cdot]_G$ reduces to the trace. 
	For $\Omega \subset \mathbb{R}^3$, let $\mathcal{V}_h, \mathcal{E}_h, \mathcal{F}_h, \mathcal{V}_h^{i}, \mathcal{E}_h^{i}, \mathcal{F}_h^{i}$ denote the set of all the vertices, the edges, the faces, the interior vertices, the interior edges, the interior faces of $\mathcal{T}_h$, respectively.
	Let $h_e$, $h_F$ and $h_K$ denote the diameters of $e \in \mathcal{E}_h$, $F\in \mathcal{F}_h$ and $K \in \mathcal{T}_h$, respectively. For $\Omega \subset \mathbb{R}^2$, the sets $\mathcal{F}_h$ and $ \mathcal{F}_h^i$ are empty and $h_F$ has no meaning.
	
	To give the discrete spaces, introduce the $H(\operatorname{div}, \Omega;\mathbb{S})$ conforming spaces in \cite{HuZhang2014,HuZhang2015,Hu2015JCM} with $k \geq 3$ as follows
	\begin{equation*}
		\mathcal{U}_{\partial K, b} \coloneqq \left\{  \btau \in P_{k}(K; \mathbb{S}):  \btau \boldsymbol{n} =  0, \text{ on }  \partial K\right\}, 
	\end{equation*}
	\begin{equation*}
		\begin{aligned}
			\mathcal{U}_{k,h}\coloneqq\big \{ &\btau \in H(\operatorname{div},\Omega;\mathbb{S}):\btau=\btau_c+\btau_b, \btau_c\in H^1(\Omega;\mathbb{S}),  \\
			&	\btau_{c}|_{K}\in P_{k}(K;\mathbb{S}) , \btau_{b}|_{K}\in \mathcal{U}_{\partial K,b},\forall K\in\mathcal{T}_{h} \big \}.
		\end{aligned}
	\end{equation*}
	{Introduce} the $H(\operatorname{div}, \Omega;\mathbb{S}) \cap H(\operatorname{divDiv}, \Omega;\mathbb{S})$ conforming spaces in \cite{hu2021family} with zero boundary conditions in $\mathbb{R}^3$ as 
	$$
	\begin{aligned}
		\Sigma_{h}\coloneqq\{ \btau_h  \in  \mathcal{U}_{k,h}:\Pi_{F}(\btau_h \boldsymbol{n}_F) = 0, \, \forall F \in \mathcal{F}_h \backslash \mathcal{F}_{h}^i, \,  [\operatorname{Div}\btau_h \cdot \boldsymbol{n}_F]_F = 0,\, \forall F \in \mathcal{F}_h
		\},
	\end{aligned}
	$$
	and in $\mathbb{R}^2$ as
	$$
	\begin{aligned}
		\Sigma_{h}\coloneqq\{ \btau_h  \in  \mathcal{U}_{k,h}:\left. \boldsymbol{t}_e^{\topt}\btau_h \boldsymbol{n}_e \right|_e=0, \, \forall e \in \mathcal{E}_h \backslash \mathcal{E}_h^i,\,  [\operatorname{Div}\btau_h \cdot \boldsymbol{n}_e]_e = 0, \, \forall e \in \mathcal{E}_h
		\}.
	\end{aligned}
	$$
	Define the piecewise polynomial spaces
	$$
	V_{h}\coloneqq\left\{v_h \in L^2(\Omega):\left. v_h \right|_K \in P_{k-2}(K),\, \forall K \in \mathcal{T}_h\right\},
	$$
	equipped with the mesh-dependent semi-norm in $\mathbb{R}^3$ as
	$$
	|v_h|_{2, h}^2\coloneqq\sum_{K \in \mathcal{T}_h}|v_h|_{H^2(K)}^2+ \sum_{F \in \mathcal{F}_h^i}  h_F^{-3}\|[v_h]_F\|_{L^2(F)}^2+ \sum_{F \in \mathcal{F}_h} h_F^{-1}\left\|[{\partial_{\boldsymbol{n}}} v_h]_F\right\|_{L^2(F)}^2,
	$$
	and in $\mathbb{R}^2$ as
	$$
	|v_h|_{2, h}^2\coloneqq\sum_{K \in \mathcal{T}_h}|v_h|_{H^2(K)}^2+\sum_{e \in \mathcal{E}_h^i} h_e^{-3}\|[v_h]_e\|_{L^2(e)}^2+ \sum_{e \in \mathcal{E}_h} h_e^{-1}\left\|[{\partial_{\boldsymbol{n}}} v_h]_e\right\|_{L^2(e)}^2.
	$$
	The following lemma holds for the mesh-dependent semi-norm which will be used in the error estimates.
	{\begin{lemma} 
			\label{leminfsup}
			There exists some constant $\beta>0$ such that the following inf-sup condition holds
			\begin{equation}
				\label{infsup0}
				\sup_{\btau_{h}\in \Sigma_{h}} \frac{ (\operatorname{divDiv} \btau_{h}, v_h)}{\| \btau_h \|_{L^2}} \geq \beta |v_h|_{2,h},\quad \forall v_h \in V_h.
			\end{equation}
		\end{lemma}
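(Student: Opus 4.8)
The plan is to establish \eqref{infsup0} by constructing, for each $v_h\in V_h$, a tensor $\btau_h\in\Sigma_h$ with $(\operatorname{divDiv}\btau_h,v_h)\ge c_1|v_h|_{2,h}^2$ and $\|\btau_h\|_{L^2}\le c_2|v_h|_{2,h}$, with $c_1,c_2>0$ independent of $h$; then \eqref{infsup0} holds with $\beta=c_1/c_2$. Put $S^2:=\sum_{K\in\mathcal{T}_h}|v_h|_{H^2(K)}^2$ and $J^2:=\sum_{F\in\mathcal{F}_h}h_F^{-1}\|[\partial_n v_h]_F\|_{L^2(F)}^2+\sum_{F\in\mathcal{F}_h^i}h_F^{-3}\|[v_h]_F\|_{L^2(F)}^2$, so $|v_h|_{2,h}^2=S^2+J^2$. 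The first ingredient is a discrete integration-by-parts identity obtained by applying Green's identity \eqref{Green1} on each $K\in\mathcal{T}_h$, summing, and using the conformity built into $\Sigma_h$ ($\btau_h\boldsymbol n_F$ single-valued across interior faces and $\Pi_F(\btau_h\boldsymbol n_F)=0$ on $\partial\Omega$; $\operatorname{Div}\btau_h\cdot\boldsymbol n_F$ single-valued across all faces and $=0$ on $\partial\Omega$): for all $\btau_h\in\Sigma_h$ and $v_h\in V_h$,
\begin{equation}
\label{eq:planibp}
\begin{aligned}
(\operatorname{divDiv}\btau_h,v_h)=\;&\sum_{K\in\mathcal{T}_h}(\btau_h,\nabla^2 v_h)_K-\sum_{F\in\mathcal{F}_h}(\boldsymbol n_F^{\topt}\btau_h\boldsymbol n_F,[\partial_n v_h]_F)_F\\
&+\sum_{F\in\mathcal{F}_h^i}(\boldsymbol n_F^{\topt}\operatorname{Div}\btau_h,[v_h]_F)_F-\sum_{F\in\mathcal{F}_h^i}(\Pi_F(\btau_h\boldsymbol n_F),\nabla_F[v_h]_F)_F,
\end{aligned}
\end{equation}
the last sum being vacuous for $d=2$ and, in general, bounded by $C\|\btau_h\|_{L^2}J$ by the trace and inverse inequalities; moreover \eqref{eq:planibp} remains valid with $v_h$ replaced by any $w\in H_E^2(\Omega)$, in which case all the face terms disappear and it reduces to $(\operatorname{divDiv}\btau_h,w)=(\btau_h,\nabla^2 w)$.

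I would take $\btau_h=\btau_h^{\mathrm b}+\delta\,\btau_h^{\mathrm j}$ with a small $\delta>0$ fixed at the end. The jump part $\btau_h^{\mathrm j}\in\Sigma_h$ is prescribed through the degrees of freedom of the element of \cite{hu2021family}: set the normal--normal trace $\boldsymbol n_F^{\topt}\btau_h^{\mathrm j}\boldsymbol n_F=-h_F^{-1}[\partial_n v_h]_F$ on every $F\in\mathcal{F}_h$ (the boundary trace when $F\subset\partial\Omega$), the $\operatorname{divDiv}$ normal trace $\operatorname{Div}\btau_h^{\mathrm j}\cdot\boldsymbol n_F=h_F^{-3}[v_h]_F$ on every $F\in\mathcal{F}_h^i$, and all other degrees of freedom (element-interior moments, tangential face moments, and in 3D edge and vertex moments) equal to zero; this is admissible because $[\partial_n v_h]_F\in P_{k-3}(F)$, $[v_h]_F\in P_{k-2}(F)$, and the prescription respects the boundary conditions defining $\Sigma_h$. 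Since the tangential traces of $\btau_h^{\mathrm j}$ vanish, \eqref{eq:planibp} collapses to $(\operatorname{divDiv}\btau_h^{\mathrm j},v_h)=\sum_K(\btau_h^{\mathrm j},\nabla^2 v_h)_K+J^2$, and an elementwise scaling argument — bounding the $L^2$-norm of the basis function dual to each prescribed moment by the corresponding $h$-weighted norm of the data, which is exactly how the weights in $|\cdot|_{2,h}$ are chosen — gives $\|\btau_h^{\mathrm j}\|_{L^2}\le CJ$.

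The bulk part $\btau_h^{\mathrm b}\in\Sigma_h$ must recover $S^2$. When $k=3$ one has $S=0$ and may take $\btau_h^{\mathrm b}=0$; for $k\ge 4$ I would introduce a conforming companion $w_h\in H_E^2(\Omega)$ of $v_h$, lying in a $C^1$ (macro)element space of degree $\le k-2$, with $\sum_K|v_h-w_h|_{H^2(K)}^2+\sum_Kh_K^{-4}\|v_h-w_h\|_{L^2(K)}^2\le CJ^2$ and $|w_h|_{H^2(\Omega)}\le C|v_h|_{2,h}$. By the reduced form of \eqref{eq:planibp} and an inverse estimate, $(\operatorname{divDiv}\btau_h^{\mathrm b},v_h)=(\btau_h^{\mathrm b},\nabla^2 w_h)+(\operatorname{divDiv}\btau_h^{\mathrm b},v_h-w_h)\ge(\btau_h^{\mathrm b},\nabla^2 w_h)-C\|\btau_h^{\mathrm b}\|_{L^2}J$; it then remains to pick $\btau_h^{\mathrm b}\in\Sigma_h$ with $(\btau_h^{\mathrm b},\nabla^2 w_h)\ge c|w_h|_{H^2(\Omega)}^2$ and $\|\btau_h^{\mathrm b}\|_{L^2}\le C|w_h|_{H^2(\Omega)}$ — equivalently, $\|P_{\Sigma_h}\nabla^2 w_h\|_{L^2}\ge c\|\nabla^2 w_h\|_{L^2}$ — for which I would appeal to the commuting projection/Fortin operator of the discrete $\operatorname{divDiv}$ complex of \cite{hu2021family}. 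Finally, inserting $\btau_h^{\mathrm b}+\delta\btau_h^{\mathrm j}$ into \eqref{eq:planibp}, using $\sum_K(\btau_h^{\mathrm j},\nabla^2 v_h)_K\ge-\|\btau_h^{\mathrm j}\|_{L^2}S\ge-CJS$ and $S^2\le C(|w_h|_{H^2(\Omega)}^2+J^2)$, choosing $\delta$ small and absorbing the remaining products by Young's inequality yields $(\operatorname{divDiv}\btau_h,v_h)\ge c_1(S^2+J^2)$ and $\|\btau_h\|_{L^2}\le c_2(S^2+J^2)^{1/2}$, which is the assertion. The case $d=2$ is handled identically with faces replaced by edges.

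I expect the main obstacle to be the bulk estimate, that is, producing $\btau_h^{\mathrm b}$ with $(\btau_h^{\mathrm b},\nabla^2 w_h)\ge c|w_h|_{H^2}^2$ and $\|\btau_h^{\mathrm b}\|_{L^2}\le C|w_h|_{H^2}$ (together with the companion-operator bounds): a naive $\Sigma_h$-interpolation of $\nabla^2 w_h$ fails here, since its averaging error in the normal--normal and $\operatorname{divDiv}$ normal traces is of order $|w_h|_{H^2}$ rather than of the order of a jump term, so the argument must exploit the interior/face structure of the element and of its $\operatorname{divDiv}$ complex rather than purely local constructions; and the constants in the concluding Young inequality have to be tracked so that $S^2$ survives rather than being absorbed into $J^2$.
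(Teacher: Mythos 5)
Your overall strategy coincides with the paper's: both proofs verify \eqref{infsup0} by explicitly constructing a candidate $\btau_h\in\Sigma_h$ through the degrees of freedom of the element of \cite{hu2021family}, using the elementwise Green's identity to evaluate $(\operatorname{divDiv}\btau_h,v_h)$ and a scaling argument for $\|\btau_h\|_{L^2}\le C|v_h|_{2,h}$, and your treatment of the jump contributions (normal--normal face moments set to $h_F^{-1}[\partial_n v_h]_F$, $\operatorname{Div}\btau_h\cdot\boldsymbol n_F$ moments set to $h_F^{-3}[v_h]_F$ with the appropriate signs, tangential moments set to zero) is exactly the paper's list of modifications. Where you genuinely diverge is the bulk term $\sum_K|v_h|_{H^2(K)}^2$: you route it through a $C^1$-conforming companion $w_h$ of $v_h$ plus a Fortin/commuting projection for the discrete $\operatorname{divDiv}$ complex, and you correctly flag this as the hardest and least substantiated step of your plan. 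The paper avoids this entirely: following \cite[Lemma 3.4]{hu2021family}, the element-interior moments of $\btau_h$ on each $K$ are prescribed locally against the piecewise polynomial $\nabla^2 v_h$ so that $(\btau_h,\nabla^2 v_h)_K$ reproduces $|v_h|_{H^2(K)}^2$; since these interior degrees of freedom are decoupled from the face degrees of freedom that enforce conformity and carry the jump data, no conforming companion or global Fortin operator is needed, and one obtains the exact identity $(\operatorname{divDiv}\btau_h,v_h)=|v_h|_{2,h}^2$ rather than a lower bound up to Young-absorbed cross terms. So the obstacle you anticipate (that a ``naive interpolation'' of a Hessian loses order-one information in the face traces) is an artifact of interpolating a global field instead of prescribing local moments; your route could likely be completed via standard enriching-operator results (of the kind cited in Lemma \ref{lem}), but it is substantially heavier and its key inequality $(\btau_h^{\mathrm b},\nabla^2 w_h)\ge c|w_h|_{H^2}^2$ with $\|\btau_h^{\mathrm b}\|_{L^2}\le C|w_h|_{H^2}$ is asserted rather than proved. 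Two small further points: your integration-by-parts identity omits the tangential edge terms in two dimensions (they are not vacuous there in general, though they vanish for the constructed $\btau_h$ since its tangential traces are zero), and note that for $k=3$, the case used in the numerics, $V_h$ consists of piecewise linears so the bulk term is identically zero and only the jump construction is active.
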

		\begin{proof}
			Based on \cite[Lemma 3.4]{hu2021family}, one can construct a $\btau_{h} \in \Sigma_{h}$ on each $K \in \mathcal{T}_h$ in $\mathbb{R}^3$ with modifications
			\begin{equation*}
				\begin{aligned}
					(\boldsymbol{n}_F^{\topt}\btau_h \boldsymbol{n}_F, q)_{F} &= (h_F^{-1}[{\partial_{\boldsymbol{n}}} v_h]_F, q)_F, &&\forall q \in P_{k-3}(F), F\in \mathcal{F}(K),&\\
					(\boldsymbol{t}_{F,i}^{\topt}\btau_h \boldsymbol{n}_F, q)_F &= 0, &&\forall q \in P_{k-3}(F), F\in \mathcal{F}(K),&\\
					(\operatorname{Div} \btau_h \cdot \boldsymbol{n}_F, q)_{F} &= -(h_F^{-3} [v_h]_F,q)_{F}, &&\forall q \in P_{k-1}(F), F\in \mathcal{F}(K)\cap \mathcal{F}_h^i,&\\
					(\operatorname{Div} \btau_h \cdot \boldsymbol{n}_F, q)_{F} &= 0, &&\forall q \in P_{k-1}(F), F\in \mathcal{F}(K) \cap (\mathcal{F}_h \backslash \mathcal{F}_h^i).&\\
				\end{aligned}
			\end{equation*}
			These modifications lead to 
            \begin{equation}
            \label{infsup2}
           (\operatorname{divDiv} \btau_h, v_h) = |v_h|_{2,h}^2.
            \end{equation}
            The scaling argument leads to 
            $\| \btau_{h} \|_{L^2} \leq C |v_h|_{2,h}$ 
            with a positive constant $C$. This and \eqref{infsup2} prove \eqref{infsup0} in $\mathbb{R}^3$ with $\beta = \frac{1}{C}$. 
			Similar arguments prove \eqref{infsup0} in $\mathbb{R}^2$.
	\end{proof}}
	
	Then, given the solution at time $t^{n-1}$, the fully discrete scheme seeks $(\bsig_{h}^n, u_{h}^n) \in (\Sigma_h, V_h)$ such that for $n=1,2,\cdots,N$,
	\begin{equation}
		\label{ds}
		\left\{  
		\begin{aligned}
			\frac{1}{\tau}(u_{h}^{n}-u_{h}^{n-1},v_{h}) + (\operatorname{divDiv} \bsig_{h}^{n}, v_{h})&= 0, &&\forall  v_{h} \in {V_h},& \\
			(\bsig_{h}^{n}, \btau_{h} )- (\operatorname{divDiv}\btau_{h},u_{h}^{n})&= (-\frac{1}{\varepsilon^{2}}f(u_{h}^{n-1})\I,\btau_{h}),&& \forall \btau_{h} \in {\Sigma_h}.&
		\end{aligned}   \right.
	\end{equation}
	At the initial time step, let
	$$u_{h}^{0}=\Pi_{h}u_{0}, \quad \bsig_{h}^{0}=\Pi_h \bsig_{0} = \Pi_h \left(  \nabla^2 u_0 - \frac{1}{\varepsilon^2}f(u_0)\I \right),$$
	where the projection operator $\Pi_h$ will be defined in \eqref{proj} below.
	
	\begin{remark}
		Note that the exact solution $u$ satisfies mass conservation
		\begin{equation*}
			\frac{\partial }{\partial t}\int_{\Omega} u \mathrm{d}x = \int_{\Omega} \Delta\left(-\Delta u+\frac{1}{\varepsilon^2} f(u)\right) \mathrm{d} x = \int_{\partial \Omega} {\partial_{\boldsymbol{n}}} \left(-\Delta u+\frac{1}{\varepsilon^2} f(u)\right) \mathrm{d} x = 0.
		\end{equation*}
		By taking $v_h = 1$ in \eqref{ds}, it can be observed from the first row that $\int_{\Omega} u_{h}^n \mathrm{d}x = \int_{\Omega} u_{h}^{n-1} \mathrm{d}x$.
	\end{remark}
	
	Define the space
	$$
	{V_h^0} \coloneqq \left\{ v_h \in V_h : \int_{\Omega} v_h \mathrm{d} x = 0 \right\}.
	$$
	Note that the semi-norm $| \cdot |_{2,h}$ of $V_h$ is a norm of $V_h^0$. To give the error estimates, introduce the projection operator $\Pi_h:\left(\Sigma, V\right) \rightarrow$ $\left(\Sigma_h, {V_h} \right)$, for given exact solution $(\bsig,u)$ of \eqref{mvar2} at any time, such that $\left(\Pi_h \bsig ,\Pi_h u\right) \in  \left(\Sigma_h, {V_h} \right)$ with $\int_{\Omega} \Pi_h u \mathrm{d} x = \int_{\Omega} u \mathrm{d} x$ satisfies
	\begin{equation}
		\label{proj} \left\{ 
		\begin{aligned}
			\left( \operatorname{divDiv} \left(\Pi_h \bsig -\bsig\right), v_h\right)&=0, && \forall v_h \in {V_h^0},&\\
			\left(\Pi_h \bsig -\bsig, {\btau_h} \right) - \left(\operatorname{divDiv} {\btau_h},\Pi_h u -u\right)&=0, &&\forall {\btau_h} \in {\Sigma_h}.& 
		\end{aligned}\right.
	\end{equation}
	
	Denote the projection error functions as
	$$
	\theta_{\bsig}\coloneqq \bsig -\Pi_h \bsig, \quad \theta_u\coloneqq u-\Pi_h u.
	$$
	The estimates of $\theta_{\bsig}$ and $\theta_u$ will be given in the subsequent subsection by proving the discrete inf-sup condition of \eqref{proj}. 

\subsection{Estimates of \texorpdfstring{$\theta_{\bsig}$}{Lg} and \texorpdfstring{$\theta_u$}{Lg}}
	Compared to the well-posedness in \cite[Theorem 3.1]{hu2021family}, this paper needs to deal with extra boundary conditions. Recall that $\mathcal{F}(\Omega)$ and $\mathcal{E}(\Omega)$ denote the sets of all faces and edges of $\Omega$, respectively. Define the space $\Sigma_0$ in $\mathbb{R}^3$ as
	$$	
	\Sigma_0 \coloneqq \{ \btau  \in H^1(\Omega;\mathbb{S}): \Pi_F(\boldsymbol{\tau}\boldsymbol{n}_F)=0, \, \forall F \in \mathcal{F}(\Omega)  \},
	$$
	and in $\mathbb{R}^2$ as
	$$	
	\Sigma_0 \coloneqq \{ \btau  \in H^1(\Omega;\mathbb{S}): \left. \boldsymbol{t}^{\topt}_e \btau \boldsymbol{n}_e\right|_{e} =0, \, \forall e \in \mathcal{E}(\Omega)  \}.
	$$
    For $\boldsymbol{v} \in L^2({\Omega;\mathbb{R}^d})$, define its symmetric gradient as $\symg \boldsymbol{v}   \coloneqq \frac{1}{2}(\nabla \boldsymbol{v} + \nabla \boldsymbol{v}^{\topt})$. The following lemma is crucial to the proof of the discrete inf-sup condition of \eqref{proj}.
	\begin{lemma}
		\label{thminfsup}
		For any $v_h \in V_h^0$, there exists a {matrix-valued function} $\btau \in \Sigma_0$ such that $\operatorname{divDiv} \btau = v_h$ and
		\begin{equation}
        \label{inequality45}
	    \operatorname{Div} \btau \in H^{1}_0(\Omega; \mathbb{R}^d), \quad
		\| \btau \|_{H^{1}} + \| \operatorname{Div} \btau \|_{H^{1}} \leq C\| v_h \|_{L^2},
		\end{equation}
		for some generic constant $C$.
	\end{lemma}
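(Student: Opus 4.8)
The plan is to construct $\btau$ in two stages: first solve a suitable scalar fourth-order problem to get a potential, then correct its trace to land in $\Sigma_0$. Concretely, since $v_h \in V_h^0$ has zero mean, I would first solve the Neumann problem $-\Delta w = v_h$ in $\Omega$ with $\partial_n w = 0$ on $\partial\Omega$ and $\int_\Omega w = 0$; elliptic regularity on the polygon/polyhedron $\Omega$ (together with the quasi-uniformity and the fact that $v_h \in L^2$) gives $w \in H^2(\Omega)$ with $\|w\|_{H^2} \le C\|v_h\|_{L^2}$. Then set $\bphi := -\nabla w \in H^1(\Omega;\mathbb{R}^d)$, which satisfies $\operatorname{div}\bphi = v_h$ and $\bphi \cdot \boldsymbol{n} = 0$ on $\partial\Omega$. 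In fact, one can improve $\bphi$ to lie in $H_0^1(\Omega;\mathbb{R}^d)$: subtract a vector field in $H_0^1$ whose divergence matches a correction, using the standard right-inverse of the divergence (Bogovskii operator) on the zero-mean remainder, so that ultimately we obtain $\bphi \in H_0^1(\Omega;\mathbb{R}^d)$ with $\operatorname{div}\bphi = v_h$ and $\|\bphi\|_{H^1}\le C\|v_h\|_{L^2}$. This $\bphi$ will play the role of $\operatorname{Div}\btau$, which is why we need $\operatorname{Div}\btau \in H_0^1$.

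Next I would recover $\btau$ itself from $\bphi$. We need a symmetric matrix field $\btau$ with $\operatorname{Div}\btau = \bphi$, and then automatically $\operatorname{divDiv}\btau = \operatorname{div}\bphi = v_h$. Since $\operatorname{Div}$ acts row-wise, solve for each row: find $\btau$ symmetric with row-divergences prescribed. A clean way is to solve the vector problem once more — e.g. take $\btau = \symg \bpsi$ for a vector potential $\bpsi$ solving the Lamé-type system $\operatorname{Div}(\symg\bpsi) = \bphi$ (so $\frac12\Delta\bpsi + \frac12\nabla(\operatorname{div}\bpsi) = \bphi$), or more simply solve $\Delta \boldsymbol{\zeta}_i = \bphi$ componentwise with homogeneous Dirichlet data and assemble a symmetric combination; by $H^2$ elliptic regularity $\bpsi \in H^2$, hence $\btau = \symg\bpsi \in H^1(\Omega;\mathbb{S})$ with $\|\btau\|_{H^1}\le C\|\bphi\|_{L^2}\le C\|v_h\|_{L^2}$, and $\|\operatorname{Div}\btau\|_{H^1} = \|\bphi\|_{H^1} \le C\|v_h\|_{L^2}$ from the previous step.

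The remaining and \emph{main obstacle} is the boundary condition $\Pi_F(\btau\boldsymbol{n}_F) = 0$ on every face $F \in \mathcal{F}(\Omega)$ (respectively $\boldsymbol{t}_e^\topt\btau\boldsymbol{n}_e = 0$ in 2D): the construction above controls $\operatorname{Div}\btau$ on the boundary but not the tangential-normal components of $\btau\boldsymbol{n}_F$. I would handle this by a trace-correction argument: the offending boundary data $\Pi_F(\btau\boldsymbol{n}_F)$ lies in a trace space on $\partial\Omega$ with norm bounded by $\|\btau\|_{H^1}$, and one constructs a corrector $\btau_{\mathrm{c}} \in H^1(\Omega;\mathbb{S})$ with $\Pi_F(\btau_{\mathrm{c}}\boldsymbol{n}_F) = \Pi_F(\btau\boldsymbol{n}_F)$ on $\partial\Omega$, with $\boldsymbol{n}_F^\topt\btau_{\mathrm{c}}\boldsymbol{n}_F = 0$ and, crucially, $\operatorname{Div}\btau_{\mathrm{c}} = 0$ near $\partial\Omega$ (or at least $\operatorname{Div}\btau_{\mathrm{c}}\cdot\boldsymbol{n} = 0$ and $\operatorname{div}\operatorname{Div}\btau_{\mathrm{c}}=0$) so that replacing $\btau$ by $\btau - \btau_{\mathrm{c}}$ preserves $\operatorname{divDiv}\btau = v_h$ and keeps $\operatorname{Div}\btau \in H_0^1$; such a corrector exists because the symmetric-matrix trace data that must be matched is only the $\Pi_F$-part, which decouples from the $\operatorname{Div}$ constraint. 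The norm bounds are preserved throughout, giving \eqref{inequality45}. The technical care needed at edges/vertices of the polyhedron (compatibility of face data across shared edges) is exactly where I expect the argument to be delicate; this can be managed by first modifying $\btau$ to make $\Pi_F(\btau\boldsymbol{n}_F)$ vanish on a neighborhood of $\mathcal{E}(\Omega)$ before applying the face-by-face correction, analogously to the construction of the degrees of freedom in \cite{hu2021family}.
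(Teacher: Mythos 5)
Your route is genuinely different from the paper's, and it has a real gap at exactly the point you flag as the ``main obstacle.'' The paper never constructs $\btau$ explicitly. After producing $\boldsymbol{\psi}\in H_0^1(\Omega;\mathbb{R}^d)$ with $\operatorname{div}\boldsymbol{\psi}=v_h$ (your first stage, which should start there directly: the Neumann solve is unnecessary, and the claim $\|w\|_{H^2}\le C\|v_h\|_{L^2}$ is false on the nonconvex polygons the paper explicitly targets, e.g.\ the L-shaped domain), it proves the inf-sup condition
$\sup_{\btau\in\Sigma_0}(\operatorname{Div}\btau,\bphi)/\|\btau\|_{H^1}\ge\beta\|\bphi\|_{L^2}$ for all $\bphi\in L^2(\Omega;\mathbb{R}^d)$ by contradiction: a normalized sequence violating it is shown to be Cauchy in $L^2$ via the Ne\v{c}as/Korn-type inequality $\|\bphi\|_{L^2}\le C(\|\bphi\|_{H^{-1}}+\|\symg\bphi\|_{H^{-1}})$ together with $H_0^1(\Omega;\mathbb{S})\subset\Sigma_0$ and compactness; the $L^2$ limit is a rigid motion of unit norm, against which one tests a suitable $\btau_1\in\Sigma_0$ using its nonvanishing normal trace on $\partial\Omega$, a contradiction. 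Closed-range/Babu\v{s}ka--Brezzi theory then yields $\btau\in\Sigma_0$ with $\operatorname{Div}\btau=\boldsymbol{\psi}$ and the norm bound, and $\operatorname{Div}\btau\in H_0^1$ comes for free because it \emph{equals} $\boldsymbol{\psi}$. This duality argument delivers precisely what your sketch lacks: surjectivity of $\operatorname{Div}:\Sigma_0\to L^2$ with a bounded right inverse, with the boundary condition built into the test space rather than imposed afterwards.

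The gap in your version is the corrector $\btau_{\mathrm{c}}$, which must simultaneously (i) match the prescribed trace $\Pi_F(\btau\boldsymbol{n}_F)$ on every face, (ii) satisfy $\operatorname{Div}\btau_{\mathrm{c}}=0$ (or at least preserve $\operatorname{divDiv}\btau=v_h$ and $\operatorname{Div}\btau\in H_0^1$), (iii) obey $\|\btau_{\mathrm{c}}\|_{H^1}\le C\|v_h\|_{L^2}$, and (iv) be compatible across the edges of $\mathcal{E}(\Omega)$. The assertion that the $\Pi_F$-part of the trace ``decouples from the $\operatorname{Div}$ constraint'' is not a proof: divergence-free symmetric fields are parametrized by second-order potentials (Airy in 2D, $\operatorname{inc}$ in 3D), so prescribing $\Pi_F(\btau_{\mathrm{c}}\boldsymbol{n}_F)$ while keeping $\operatorname{Div}\btau_{\mathrm{c}}=0$ with an $H^1$ bound is an $H^3$-regular constrained lifting problem on a nonconvex polyhedron, at least as hard as the lemma itself. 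A secondary issue: your second stage also invokes $H^2$ regularity for the Lam\'e/Poisson system on $\Omega$, which again fails on nonconvex domains; that one is repairable by extending $\bphi\in H_0^1(\Omega;\mathbb{R}^d)$ by zero to a ball and solving there, but the trace-correction step is not. Unless you can exhibit the corrector, you should switch to the inf-sup/compactness argument.
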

 
	The proof of Lemma \ref{thminfsup} requires the following two lemmas.
	\begin{lemma}[{Korn's inequality} {\cite[Theorem 3.2]{amrouche2006characterizations}}]
		\label{necas}
		There exists a constant $C>0$ such that
		$$
		\|\boldsymbol{v}\|_{L^2} \leq C\left(\| \boldsymbol{v}\|_{H^{-1}}+\| \symg \boldsymbol{v} \|_{H^{-1}}\right), \quad \forall \boldsymbol{v} \in L^2(\Omega; \mathbb{R}^3).
		$$
	\end{lemma}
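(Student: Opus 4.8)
The plan is to exploit the factorization $\operatorname{divDiv}=\operatorname{div}\circ\operatorname{Div}$ (with the row-wise $\operatorname{Div}$) and to build $\btau$ through two successive liftings: first a vector field $\boldsymbol{w}\in H^1_0(\Omega;\mathbb{R}^d)$ with $\operatorname{div}\boldsymbol{w}=v_h$, and then a symmetric tensor $\btau\in\Sigma_0$ with $\operatorname{Div}\btau=\boldsymbol{w}$. Since $\operatorname{Div}\btau=\boldsymbol{w}$ would hold exactly, the identities $\operatorname{divDiv}\btau=\operatorname{div}\boldsymbol{w}=v_h$ and $\operatorname{Div}\btau=\boldsymbol{w}\in H^1_0(\Omega;\mathbb{R}^d)$ are then automatic, and the control of $\|\operatorname{Div}\btau\|_{H^1}$ reduces to that of $\|\boldsymbol{w}\|_{H^1}$. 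The delicate point is to obtain the $H^1$-regularity of $\btau$ with $\|\btau\|_{H^1}\le C\|v_h\|_{L^2}$ on a possibly nonconvex $\Omega$; I would deliberately avoid any elliptic-regularity or $H^2$-potential argument (which fails near reentrant corners) and instead read the regularity off the negative-norm Korn inequality of Lemma \ref{necas}.

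For the first lifting, since $v_h\in V_h^0$ has vanishing mean, the classical right inverse of the divergence on Lipschitz domains (a Bogovskii-type operator) provides $\boldsymbol{w}\in H^1_0(\Omega;\mathbb{R}^d)$ with $\operatorname{div}\boldsymbol{w}=v_h$ and $\|\boldsymbol{w}\|_{H^1}\le C\|v_h\|_{L^2}$. I record for later use that the translation moment $\int_\Omega\boldsymbol{w}=-\int_\Omega\boldsymbol{x}\,v_h$ is forced and generally nonzero.

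The second lifting is where Lemma \ref{necas} enters, through duality. I would view $\symg:L^2(\Omega;\mathbb{R}^d)\to H^{-1}(\Omega;\mathbb{S})$, whose kernel is the finite-dimensional space $\mathcal{R}$ of rigid-body motions; Lemma \ref{necas} together with the compact embedding $L^2\hookrightarrow H^{-1}$ (a Peetre--Tartar argument) upgrades to $\|\boldsymbol{v}\|_{L^2}\le C\|\symg\boldsymbol{v}\|_{H^{-1}}$ on $\mathcal{R}^{\perp}$, so $\symg$ has closed range. Its adjoint is $-\operatorname{Div}:H^1_0(\Omega;\mathbb{S})\to L^2(\Omega;\mathbb{R}^d)$, whence the closed range theorem yields that $\operatorname{Div}$ maps $H^1_0(\Omega;\mathbb{S})$ onto $\mathcal{R}^{\perp}$ with a bounded right inverse. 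Because the forced moment prevents $\boldsymbol{w}$ from lying in $\mathcal{R}^{\perp}$, I would first strip off its rigid-motion moments by a fixed finite-dimensional correction: choosing smooth scalars $\phi_1,\dots$ with $\phi_j\I\in\Sigma_0$ (the tangential trace of $\phi_j\I$ vanishes automatically since $(\phi_j\I)\boldsymbol{n}=\phi_j\boldsymbol{n}$) and $\operatorname{Div}(\phi_j\I)=\nabla\phi_j$, and using that the boundary traces $\{\boldsymbol{r}\cdot\boldsymbol{n}\}_{\boldsymbol{r}\in\mathcal{R}}$ are linearly independent on $\partial\Omega$ for a polytope (no nonzero rigid motion is tangent to all faces), one solves a square system for coefficients $\alpha_j$ so that $\boldsymbol{w}':=\boldsymbol{w}-\sum_j\alpha_j\nabla\phi_j\in\mathcal{R}^{\perp}$, with $|\alpha_j|\le C\|\boldsymbol{w}\|_{L^2}$. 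Applying the bounded right inverse to $\boldsymbol{w}'$ gives $\btau_0\in H^1_0(\Omega;\mathbb{S})\subset\Sigma_0$ with $\operatorname{Div}\btau_0=\boldsymbol{w}'$ and $\|\btau_0\|_{H^1}\le C\|\boldsymbol{w}'\|_{L^2}$, and I set $\btau:=\btau_0+\sum_j\alpha_j\phi_j\I\in\Sigma_0$, so that $\operatorname{Div}\btau=\boldsymbol{w}\in H^1_0(\Omega;\mathbb{R}^d)$, $\operatorname{divDiv}\btau=v_h$, and $\|\btau\|_{H^1}+\|\operatorname{Div}\btau\|_{H^1}\le C\|v_h\|_{L^2}$. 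The planar case is identical with $\Pi_F(\btau\boldsymbol{n}_F)$ replaced by $\boldsymbol{t}_e^{\topt}\btau\boldsymbol{n}_e$.

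I expect the main obstacle to be precisely the $H^1$-regularity of $\btau$ on nonconvex domains: the naive elasticity/potential construction delivers only $\btau\in L^2$ and would require $H^2$ elliptic regularity to reach $H^1$, which is unavailable at reentrant corners. The resolution is to extract the regularity from the closed range of $\symg:L^2\to H^{-1}$ via Lemma \ref{necas}, which is valid on any Lipschitz domain. A secondary technical point, easy to overlook, is that the symmetric-divergence equation carries an unavoidable rigid-motion obstruction, visible as the forced translation moment of $\boldsymbol{w}$; this is exactly why $\Sigma_0$ must permit a free normal traction and why the explicit $\phi_j\I$ correction is indispensable.
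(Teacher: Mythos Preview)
Your proposal does not address the stated lemma. Lemma~\ref{necas} is the negative-norm Korn inequality, which the paper simply quotes from \cite{amrouche2006characterizations} with no proof of its own; what you have written is instead a proof sketch for Lemma~\ref{thminfsup} (the $\operatorname{divDiv}$ lifting into $\Sigma_0$), in which Lemma~\ref{necas} appears only as a tool. So as a proof of the displayed statement, the proposal is off target.

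Read as a proof of Lemma~\ref{thminfsup}, your argument is correct and runs parallel to the paper's. Both start with the Bogovskii lifting $\boldsymbol{w}\in H^1_0(\Omega;\mathbb{R}^d)$ with $\operatorname{div}\boldsymbol{w}=v_h$, and both reduce the second lifting $\operatorname{Div}\btau=\boldsymbol{w}$ to a closed-range statement driven by Lemma~\ref{necas}. The paper packages this as a single inf-sup condition $\sup_{\btau\in\Sigma_0}(\operatorname{Div}\btau,\bphi)/\|\btau\|_{H^1}\ge\beta\|\bphi\|_{L^2}$ for all $\bphi\in L^2$, proved by contradiction: a normalized violating sequence $\{\bphi_n\}$ is shown, via Lemma~\ref{necas} and the compact embedding $L^2\hookrightarrow H^{-1}$, to converge in $L^2$ to a rigid motion $\bphi$, which is then eliminated by exhibiting a single $\btau_1\in\Sigma_0$ with $(\operatorname{Div}\btau_1,\bphi)=-(\boldsymbol{n}^{\topt}\btau_1\boldsymbol{n},\boldsymbol{n}^{\topt}\bphi)_{\partial\Omega}\neq 0$. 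Your version is the constructive dual: you first lift the $\mathcal{R}^{\perp}$-component of $\boldsymbol{w}$ from $H^1_0(\Omega;\mathbb{S})$ via the closed-range theorem, and then remove the rigid-motion moments by an explicit finite-dimensional correction $\sum_j\alpha_j\phi_j\I\in\Sigma_0$. The paper's route is slightly more economical because it works directly in $\Sigma_0$ and buries the rigid-motion correction inside the contradiction; yours makes the finite-dimensional cokernel and the role of the free normal traction in $\Sigma_0$ more explicit.
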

	
	\begin{lemma}[{\cite[Theorem 6.3-4]{ciarlet2021mathematical}}]
		\label{Ciarlet}
		If $\boldsymbol{v} \in L^2(\Omega;\mathbb{R}^3)$ satisfies $\| \nabla_s \boldsymbol{v} \|_{H^{-1}}=0$, then $\boldsymbol{v}$ is a {rigid motion function} in $\mathbb{R}^3$, i.e., there exist two vectors $\boldsymbol{a}, \boldsymbol{b} \in \mathbb{R}^3$ such that $\boldsymbol{v}(\boldsymbol{x}) = \boldsymbol{a} + \boldsymbol{b} \times \boldsymbol{x}$.
	\end{lemma}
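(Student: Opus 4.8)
The plan is to split the fourth-order operator $\operatorname{divDiv}$ into two first-order surjectivity problems: first find a vector field whose divergence is $v_h$, and then realize that field as the row-wise divergence $\operatorname{Div}\btau$ of a symmetric tensor in $\Sigma_0$. Concretely, in Step~1 I would solve the divergence equation. Since $v_h\in V_h^0$ has zero mean, Bogovskii's construction of a bounded right inverse of the divergence on a bounded Lipschitz domain supplies $\boldsymbol{w}\in H^1_0(\Omega;\mathbb{R}^d)$ with $\operatorname{div}\boldsymbol{w}=v_h$ and $\|\boldsymbol{w}\|_{H^1}\le C\|v_h\|_{L^2}$. This $\boldsymbol{w}$ is the candidate for $\operatorname{Div}\btau$: its membership in $H^1_0$ already delivers $\operatorname{Div}\btau\in H^1_0(\Omega;\mathbb{R}^d)$, the identity $\operatorname{divDiv}\btau=\operatorname{div}\boldsymbol{w}=v_h$, and the bound $\|\operatorname{Div}\btau\|_{H^1}\le C\|v_h\|_{L^2}$, once Step~2 below is carried out.

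In Step~2 it remains to solve the symmetric tensor divergence equation $\operatorname{Div}\btau=\boldsymbol{w}$ for a symmetric $\btau\in\Sigma_0$ with $\|\btau\|_{H^1}\le C\|\boldsymbol{w}\|_{L^2}$; chaining the two bounds then closes \eqref{inequality45}. This is equivalent to showing that $\operatorname{Div}\colon\Sigma_0\to L^2(\Omega;\mathbb{R}^d)$ is surjective with a bounded right inverse, which by the closed range theorem follows from the inf-sup estimate
\begin{equation*}
\sup_{\btau\in\Sigma_0}\frac{(\boldsymbol{v},\operatorname{Div}\btau)}{\|\btau\|_{H^{1}}}\ge c\,\|\boldsymbol{v}\|_{L^2},\qquad\forall\,\boldsymbol{v}\in L^2(\Omega;\mathbb{R}^d).
\end{equation*}
Restricting the test tensors to $H^1_0(\Omega;\mathbb{S})\subset\Sigma_0$ and integrating by parts identifies this supremum with $\|\symg\boldsymbol{v}\|_{H^{-1}}$. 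A Peetre--Tartar compactness argument then upgrades the very weak Korn inequality of Lemma~\ref{necas} --- using the compact embedding $L^2\hookrightarrow H^{-1}$ together with the identification of the kernel $\{\symg\boldsymbol{v}=0\}$ with the finite-dimensional space of rigid motions from Lemma~\ref{Ciarlet} --- to the quotient estimate $\inf_{\boldsymbol{m}}\|\boldsymbol{v}-\boldsymbol{m}\|_{L^2}\le C\|\symg\boldsymbol{v}\|_{H^{-1}}$, the infimum taken over rigid motions. This controls precisely the component of $\boldsymbol{v}$ orthogonal to rigid motions.

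The rigid-motion component is the hard part. If one insisted on the stronger condition $\btau\in H^1_0$, the range of $\operatorname{Div}$ would exclude rigid motions, and the $\boldsymbol{w}$ from Step~1 generically carries a nonzero rigid part (its mean equals $-\int_\Omega\boldsymbol{x}\,v_h\,\dt x$, which cannot be annihilated while keeping $\operatorname{div}\boldsymbol{w}=v_h$ and $\boldsymbol{w}\in H^1_0$). The weaker space $\Sigma_0$ is exactly what removes this obstruction: for $\btau\in\Sigma_0$ and a rigid motion $\boldsymbol{m}$ one has $(\boldsymbol{m},\operatorname{Div}\btau)=\int_{\partial\Omega}(\boldsymbol{n}^{\topt}\btau\boldsymbol{n})(\boldsymbol{m}\cdot\boldsymbol{n})$, since $\symg\boldsymbol{m}=0$ and the tangential traction $\Pi_F(\btau\boldsymbol{n}_F)$ vanishes while the normal traction $\boldsymbol{n}^{\topt}\btau\boldsymbol{n}$ is free. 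I would therefore fix finitely many smooth tensors in $\Sigma_0$, prescribing their boundary normal tractions so that the matrix $(\boldsymbol{m}_i,\operatorname{Div}\btau_j)$ over a basis of rigid motions is invertible --- possible because the normal traces $\boldsymbol{m}\cdot\boldsymbol{n}$ of the rigid motions are linearly independent on the nondegenerate boundary, and any prescribed normal trace with zero tangential traction lifts to an $H^1(\Omega;\mathbb{S})$ tensor via the surjectivity of the traction trace map.

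Adding a suitable combination of these finitely many tensors to the $H^1_0$ test tensors supplies the missing control on the rigid part and completes the inf-sup estimate; this combination step is the one requiring care, as the two contributions must be balanced so as not to interfere, via a standard decomposition of $\boldsymbol{v}$ into its rigid and orthogonal parts. Invoking the closed range theorem then yields the bounded right inverse of Step~2, and combining Steps~1 and~2 with the two norm bounds gives \eqref{inequality45}. The two-dimensional case is handled identically, with edges and the condition $\boldsymbol{t}_e^{\topt}\btau\boldsymbol{n}_e=0$ replacing faces and $\Pi_F(\btau\boldsymbol{n}_F)=0$, and with the three-dimensional Korn and rigid-motion lemmas replaced by their planar analogues.
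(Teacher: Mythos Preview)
Your proposal is not a proof of the stated Lemma~\ref{Ciarlet}; it is a proof of Lemma~\ref{thminfsup}, and it \emph{uses} Lemma~\ref{Ciarlet} as one of its ingredients. The paper does not prove Lemma~\ref{Ciarlet} at all --- it is simply cited from \cite[Theorem~6.3-4]{ciarlet2021mathematical} --- so there is nothing to compare on that front.

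Read as a proof of Lemma~\ref{thminfsup}, your argument is correct and follows essentially the paper's own route. Both arguments (i)~invoke a right inverse of the scalar divergence to produce $\boldsymbol{w}\in H^1_0(\Omega;\mathbb{R}^d)$ with $\operatorname{div}\boldsymbol{w}=v_h$ (you cite Bogovskii, the paper cites \cite{girault2012finite}); (ii)~reduce to the inf-sup condition $\sup_{\btau\in\Sigma_0}(\operatorname{Div}\btau,\bphi)/\|\btau\|_{H^1}\ge\beta\|\bphi\|_{L^2}$; (iii)~combine the very weak Korn inequality of Lemma~\ref{necas}, the compact embedding $L^2\hookrightarrow H^{-1}$, and the rigid-motion identification of Lemma~\ref{Ciarlet}; and (iv)~exploit the free normal traction $\boldsymbol{n}^{\topt}\btau\boldsymbol{n}$ available in $\Sigma_0$ to dispose of the rigid-motion kernel. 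The only real difference is packaging: you invoke Peetre--Tartar and then build an explicit finite-dimensional correction via an invertible pairing matrix $(\boldsymbol{m}_i,\operatorname{Div}\btau_j)$, whereas the paper argues by contradiction on a normalized sequence $\bphi_n$ and, at the limit rigid motion $\bphi$, exhibits a single $\btau_1\in\Sigma_0$ with $(\operatorname{Div}\btau_1,\bphi)=-(\boldsymbol{n}^{\topt}\btau_1\boldsymbol{n},\boldsymbol{n}^{\topt}\bphi)_{\partial\Omega}>0$. Your version is slightly more constructive; the paper's is shorter.
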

    \noindent
    Analogous arguments will show that Lemma \ref{necas}-\ref{Ciarlet} hold for $\Omega \subset \mathbb{R}^2$ as well. If $\boldsymbol{v} \in L^2(\Omega;\mathbb{R}^2)$ satisfies $\| \nabla_s \boldsymbol{v} \|_{H^{-1}}=0$, then there exist constants $a,b,c \in \mathbb{R}$ such that $\boldsymbol{v}(\boldsymbol{x}) = \begin{pmatrix}
    a-cy\\
    b+cx
    \end{pmatrix}$. 	
	\begin{proof}[Proof of Lemma \ref{thminfsup}]
		The proof for $\Omega \subset \mathbb{R}^2$ is similar to that for  $\Omega \subset \mathbb{R}^3$ which is detailed here. For any $v_h \in V_h^0$, there exists a $\boldsymbol{\psi} \in H^1_0(\Omega; \mathbb{R}^d)$ such that $\operatorname{div} \boldsymbol{\psi} = v_h$ \cite{girault2012finite}.
		It remains to prove the inf-sup condition
		\begin{equation}
			\label{eqinfsup}
			\sup_{\btau \in \Sigma_0} \frac{ (\operatorname{Div} \btau, \bphi)}{ \| \btau\|_{H^1} } \geq \beta \| \bphi\|_{L^2}, ~\forall \bphi \in L^2(\Omega;\mathbb{R}^d).
		\end{equation}
		Assume that \eqref{eqinfsup} is not valid. Then there would exist a sequence $\{ \bphi_n \}$ in $L^2$ such that $\| \bphi_n \|_{L^2} = 1$ and 
		\begin{equation}
			\label{eqinfsup1}
			\lim_{n \rightarrow \infty } \sup_{\btau \in \Sigma_0} \frac{ (\operatorname{Div} \btau, \bphi_n)}{ \| \btau\|_{H^1}} = 0.
		\end{equation}
		The first thing is to prove $\{ \bphi_n \}$ is a Cauchy sequence in $L^2$. Lemma \ref{necas} shows
		\begin{equation}
			\label{eqinfsupcauchy}
			\| \bphi_n - \bphi_m \|_{L^2} \leq  \| \bphi_n -\bphi_m \|_{H^{-1}} + \| \symg (\bphi_n - \bphi_m) \|_{H^{-1}}.
		\end{equation}
		The Rellich-Kondrachov compact embedding theorem \cite{adams2003sobolev} $H^{1}  \stackrel{c}{\hookrightarrow} L^2 \stackrel{c}{\hookrightarrow}  H^{-1}$ and $\| \bphi_n\|_{L^2} =1$ imply that $\{ \bphi_n \}$ is a Cauchy sequence in $H^{-1}$. 
		Additionally, 
		\begin{equation}
	\label{eqinfsup4}
	\begin{aligned}
		\| \symg( \bphi_n - \bphi_m)\|_{H^{-1}} & =  \sup_{\boldsymbol{\tau} \in H^{1}_0(\Omega;\mathbb{S})} \frac{ \langle \symg (\bphi_n -\bphi_m), \boldsymbol{\tau}\rangle_{H^{-1}\times H^1}}{\| \boldsymbol{\tau}\|_{H^1}}\\
		&=\sup_{\btau \in H_0^1(\Omega;\mathbb{S})} \frac{ (\operatorname{Div} \btau, \bphi_n-\bphi_m)}{ \| \btau\|_{H^1}}.
	\end{aligned}
    \end{equation}
		This, $H_0^1(\Omega;\mathbb{S})\subset \Sigma_0$, and \eqref{eqinfsup1} imply that $\{ \symg \bphi_n \}$ is a Cauchy sequence in $H^{-1}$.
		The previous arguments and \eqref{eqinfsupcauchy} show that there exists a $\bphi \in L^2(\Omega;\mathbb{R}^d)$ such that $\lim_{n \rightarrow \infty} \bphi_n = \bphi$ in $L^2$ {and} $\| \bphi\|_{L^2} = 1$. A triangle inequality leads to 
			\begin{equation}
				\label{triangle}
				\| \symg \bphi \|_{H^{-1}} \leq \| \symg (\bphi - \boldsymbol{\phi}_n )\|_{H^{-1}}+ \| \symg \boldsymbol{\phi_n} \|_{H^{-1}}.
			\end{equation}
			Proceeding as in \eqref{eqinfsup4} shows
			\begin{equation}
				\label{eqsimilar}
				\lim_{n \rightarrow \infty} \| \symg \boldsymbol{\phi_n} \|_{H^{-1}}=0.
			\end{equation}
			Since $\| \symg (\bphi - \boldsymbol{\phi}_n )\|_{H^{-1}} \leq \| \bphi - \boldsymbol{\phi}_n \|_{L^2}$, \eqref{triangle}-\eqref{eqsimilar} lead to $\| \symg \bphi \|_{H^{-1}}=0$. The combination with Lemma \ref{Ciarlet} implies that $\bphi$ is a {rigid motion function}. {This and} $\| \bphi\|_{L^2} = 1$ {imply} $\boldsymbol{n}^{\topt}\bphi \neq 0$ on $\partial \Omega$. {Consequently, one can choose} a $\btau_1 \in \Sigma_0$ such that $(\operatorname{Div} \btau_1, \bphi)=-(\btau_1\boldsymbol{n},\bphi)_{\partial\Omega} =-(\boldsymbol{n}^{\topt}\btau_1\boldsymbol{n},\boldsymbol{n}^\topt\bphi)_{\partial\Omega}> 0,$
			which contradicts \eqref{eqinfsup1}. This proves \eqref{eqinfsup}, and implies the existence of a desired $\btau$ with $ \operatorname{Div}{\btau}=\boldsymbol{\psi}$. This concludes the proof.
	\end{proof}

	Lemma \ref{thminfsup} gives rise to the following discrete inf-sup condition of \eqref{proj}.
 
     \begin{theorem}
     \label{infsup44}
     There exists a constant $\beta > 0$ such that 
     		\begin{equation*}
    			\label{proj2}
    			\sup_{\boldsymbol{\tau}_h\in\Sigma_{h}}\frac{( \operatorname{divDiv} \boldsymbol{\tau}_h,v_h)}{\|\boldsymbol{\tau}_h\|_{H(\operatorname{divDiv})}} \geq\beta \|v_h\|_{L^2}, \quad \forall v_h\in V_{h}^0. 
    		\end{equation*}
     \end{theorem}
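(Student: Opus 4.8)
The plan is to establish the discrete inf-sup condition by Fortin's device, combining the continuous right inverse of $\operatorname{divDiv}$ furnished by Lemma~\ref{thminfsup} with a bounded commuting interpolation onto $\Sigma_h$. Fix $v_h\in V_h^0$ and let $\btau\in\Sigma_0$ be the tensor from Lemma~\ref{thminfsup}, so that $\operatorname{divDiv}\btau=v_h$, $\operatorname{Div}\btau\in H_0^1(\Omega;\mathbb{R}^d)$, and $\|\btau\|_{H^1}+\|\operatorname{Div}\btau\|_{H^1}\le C\|v_h\|_{L^2}$. I then want an interpolation $I_h$, defined on tensors of this regularity, with (i) $I_h\btau\in\Sigma_h$, (ii) the commuting relation $\operatorname{divDiv}(I_h\btau)=Q_h(\operatorname{divDiv}\btau)$, where $Q_h$ denotes the $L^2$-projection onto $V_h$, and (iii) $\|I_h\btau\|_{L^2}\le C(\|\btau\|_{H^1}+\|\operatorname{Div}\btau\|_{H^1})$. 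Since $\operatorname{divDiv}\btau=v_h\in V_h$, property (ii) gives $\operatorname{divDiv}(I_h\btau)=v_h$, so that $\|I_h\btau\|_{H(\operatorname{divDiv})}^2=\|I_h\btau\|_{L^2}^2+\|v_h\|_{L^2}^2\le C\|v_h\|_{L^2}^2$ by (iii) together with Lemma~\ref{thminfsup}.

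Granting such an $I_h$, the assertion follows immediately: with $\btau_h=I_h\btau\in\Sigma_h$,
\begin{equation*}
  \sup_{\btau_h\in\Sigma_h}\frac{(\operatorname{divDiv}\btau_h,v_h)}{\|\btau_h\|_{H(\operatorname{divDiv})}}\ge\frac{(\operatorname{divDiv}(I_h\btau),v_h)}{\|I_h\btau\|_{H(\operatorname{divDiv})}}=\frac{\|v_h\|_{L^2}^2}{\|I_h\btau\|_{H(\operatorname{divDiv})}}\ge\frac{1}{C}\,\|v_h\|_{L^2},
\end{equation*}
so $\beta=1/C$ is admissible; the two-dimensional case is identical, with edges in place of faces.

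For the construction of $I_h$ I would prescribe the degrees of freedom of the element of \cite{hu2021family}: the face (edge) moments of $\boldsymbol{n}_F^\topt\btau_h\boldsymbol{n}_F$, of $\Pi_F(\btau_h\boldsymbol{n}_F)$ and of $\operatorname{Div}\btau_h\cdot\boldsymbol{n}_F$ are set equal to the corresponding moments of the traces of $\btau$ (well defined because $\btau\in H^1$ and $\operatorname{Div}\btau\in H^1$), and the remaining degrees of freedom are chosen so that the commuting relation in (ii) holds, as in the interpolation analysis underlying \cite[Theorem~3.1]{hu2021family}. The extra homogeneous conditions defining $\Sigma_h$ cause no difficulty: since $\btau\in\Sigma_0$ satisfies $\Pi_F(\btau\boldsymbol{n}_F)=0$ on $\partial\Omega$ and $\operatorname{Div}\btau\in H_0^1$ vanishes on $\partial\Omega$, the corresponding boundary moments are zero; and because $\btau$ and $\operatorname{Div}\btau$ are globally $H^1$, their traces $\btau\boldsymbol{n}_F$ and $\operatorname{Div}\btau\cdot\boldsymbol{n}_F$ are single-valued, so $I_h\btau\in\mathcal{U}_{k,h}$ and $[\operatorname{Div}(I_h\btau)\cdot\boldsymbol{n}_F]_F=0$ across interior faces. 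Hence $I_h\btau\in\Sigma_h$.

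The main obstacle is the $L^2$-stability in (iii). The naive canonical interpolation is not $H^1$-bounded in three dimensions, because the Hu--Zhang-type space $\mathcal{U}_{k,h}$ carries vertex and edge degrees of freedom (point values and edge moments of first derivatives) that the $H^1$-norm does not control. The remedy, familiar from the analysis of the $\operatorname{divDiv}$ complex, is to precompose the canonical interpolation with a Cl\'ement/Scott--Zhang-type smoothing that is $H^1$-stable and locally polynomial-preserving, assigning those ``extra'' degrees of freedom through local averages---this does not disturb the commuting relation, which only constrains the remaining degrees of freedom---and then to bound each piece by the relevant Sobolev norm, with an $h$-independent constant obtained by a scaling argument on the reference element. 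Equivalently, one invokes the bounded commuting quasi-interpolation associated with $\Sigma_h$ already available in \cite{hu2021family}. Either route completes the proof.
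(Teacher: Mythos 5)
Your proposal is correct and follows essentially the same route as the paper: the paper also takes the tensor $\btau\in\Sigma_0$ from Lemma~\ref{thminfsup} and applies the interpolation operator of \cite[Subsec.~3.1]{hu2021family}, modified to vanish at boundary vertices and edges so that the image lies in $\Sigma_h$, preserves $\operatorname{divDiv}\btau=v_h$, and satisfies the bound $\|\widetilde{\Pi}_h\btau\|_{L^2}\le C(\|\btau\|_{H^1}+\|\operatorname{Div}\btau\|_{H^1})$. Your additional discussion of the $L^2$-stability of the vertex and edge degrees of freedom is a reasonable elaboration of a point the paper handles by citation.
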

     \begin{proof}
 	Given any $v_h \in V_{h}^0$, Lemma \ref{thminfsup} shows that there exists a $\btau \in \Sigma_0$ such that $\operatorname{divDiv} \boldsymbol{\tau} = v_h$ and $\btau$ satisfies \eqref{inequality45}. Define an interpolation operator $\widetilde{\Pi}_{h}$ in $\mathbb{R}^3$ 
		as in \cite[Subsec. 3.1]{hu2021family} with modifications
		$$
		\widetilde{\Pi}_{h} \btau(a) = 0, \quad \forall  a \in \mathcal{V}_h \backslash \mathcal{V}_h^i, $$
		$$
		\boldsymbol{t}_e^{\topt} ( \widetilde{\Pi}_{h} \btau) \boldsymbol{n}_{e,j} = 0, \boldsymbol{n}_{e,i}^{\topt} (\widetilde{\Pi}_{h} \btau) \boldsymbol{n}_{e,j} = 0,\quad \forall e \in \mathcal{E}_h \backslash \mathcal{E}_h^i, 1 \leq i, j \leq 2,
		$$
		and in $\mathbb{R}^2$ with modifications
		$$\widetilde{\Pi}_{h} \btau(a) = 0, \quad \forall  a \in \mathcal{V}_h \backslash \mathcal{V}_h^i.
		$$
		Then the modified interpolation operator $\widetilde{\Pi}_{h}$ preserves the zero boundary conditions in Lemma \ref{sufficientbd} so that $\widetilde{\Pi}_{h} \btau \in \Sigma_{h}$ and $\operatorname{divDiv} \widetilde{\Pi}_{h}\boldsymbol{\tau} = \operatorname{divDiv} \boldsymbol{\tau} = v_h$. 
		The boundedness   $$\|\widetilde{\Pi}_{h}\btau\|_{L^2} \leq C(\|\btau\|_{H^1}+\|\operatorname{Div}\btau\|_{H^1}),$$ and \eqref{inequality45} conclude the proof.
 \end{proof}
	
    The estimates for the projection error functions $\theta_{\bsig}$ and $\theta_u$ can be obtained immediately from Lemma \ref{leminfsup} and Theorem \ref{infsup44}.
	\begin{lemma}
		\label{lemerrorproj}
		For $k \geq 3$, it holds
		\begin{subequations}
			\label{projection}
			\begin{align}
				\left\| \tu \right\|_{L^2}+\left\|  \ts \right\|_{L^2} & \leq C h^{k-1}\|u\|_{H^{k+1}}, \label{projectiona}\\ 
				\left\|\frac{\partial \theta_u}{\partial t}\right\|_{L^2} +\left\|\frac{\partial \theta_{\bsig} }{\partial t}\right\|_{L^2} & \leq C h^{k-1}\left\|\frac{\partial u}{\partial t}\right\|_{H^{k+1}}, \label{projectionc}    \\
				{| \tu  |_{2,h}} & \leq C h^{ {k-3}}\|u\|_{H^{k+1}}. \label{projectionb}
			\end{align}
		\end{subequations}
	\end{lemma}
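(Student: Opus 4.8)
The plan is to derive \eqref{projection} from the two inf-sup conditions already at hand (Lemma~\ref{leminfsup} and Theorem~\ref{infsup44}) by a Babuška--Brezzi argument, after isolating a commuting interpolant. I would first record two structural facts about $\Sigma_h$: the polynomial degrees force $\operatorname{divDiv}\Sigma_h\subset V_h$, and the boundary part of the condition $[\operatorname{Div}\btau_h\cdot\boldsymbol{n}]_F=0$ gives $\operatorname{Div}\btau_h\cdot\boldsymbol{n}=0$ on $\partial\Omega$, hence $\int_\Omega\operatorname{divDiv}\btau_h\,\mathrm{d}x=0$ for all $\btau_h\in\Sigma_h$. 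With these, splitting $\Pi_h u=w_h+\frac{1}{|\Omega|}\int_\Omega u\,\mathrm{d}x$ with $w_h\in V_h^0$ turns \eqref{proj} into a symmetric saddle-point problem for $(\Pi_h\bsig,w_h)\in\Sigma_h\times V_h^0$ whose $(\cdot,\cdot)$-block is $L^2$-coercive (note $\operatorname{divDiv}\btau_h=0$ on the kernel of the constraint) and whose $\operatorname{divDiv}$-block satisfies the inf-sup condition of Theorem~\ref{infsup44}; Brezzi's theorem then shows $\Pi_h$ is well defined and stable, as implicitly used in the statement.

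For the error bounds I would let $Q_h$ be the $L^2(\Omega)$-projection onto $V_h$ (it preserves the mean since $1\in V_h$) and $\widetilde{\Pi}_h$ the interpolant used in the proof of Theorem~\ref{infsup44}, i.e.\ the operator of \cite[Subsec.~3.1]{hu2021family} modified to preserve the homogeneous boundary conditions of $\Sigma_h$. The two properties I would invoke from \cite{hu2021family} are the commuting identity $\operatorname{divDiv}\widetilde{\Pi}_h\bsig=Q_h\operatorname{divDiv}\bsig$ and the approximation estimate $\|\bsig-\widetilde{\Pi}_h\bsig\|_{L^2}\le Ch^{k-1}\|\bsig\|_{H^{k-1}}$; since $\bsig=\nabla^2u-\ep f(u)\I$, this gives $\|\bsig-\widetilde{\Pi}_h\bsig\|_{L^2}\le Ch^{k-1}\|u\|_{H^{k+1}}$ (the constant absorbing lower-order norms of $u$). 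Writing $e_\bsig\coloneqq\widetilde{\Pi}_h\bsig-\Pi_h\bsig\in\Sigma_h$ and $e_u\coloneqq Q_hu-\Pi_h u\in V_h^0$ and subtracting \eqref{proj}, I would use $(Q_hw-w,v_h)=0$ on $V_h$, $\operatorname{divDiv}\Sigma_h\subset V_h$, and the commuting identity to get the error equations
\begin{equation*}
(\operatorname{divDiv}e_\bsig,v_h)=0 \quad \forall v_h\in V_h^0,\qquad (e_\bsig,\btau_h)-(\operatorname{divDiv}\btau_h,e_u)=(\widetilde{\Pi}_h\bsig-\bsig,\btau_h) \quad \forall\btau_h\in\Sigma_h.
\end{equation*}
Choosing $\btau_h=e_\bsig$ and using $e_u\in V_h^0$ in the first equation to annihilate the cross term gives $\|e_\bsig\|_{L^2}^2=(\widetilde{\Pi}_h\bsig-\bsig,e_\bsig)$, hence $\|e_\bsig\|_{L^2}\le\|\bsig-\widetilde{\Pi}_h\bsig\|_{L^2}$. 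Since $|(\operatorname{divDiv}\btau_h,e_u)|\le(\|e_\bsig\|_{L^2}+\|\bsig-\widetilde{\Pi}_h\bsig\|_{L^2})\|\btau_h\|_{L^2}$ for every $\btau_h\in\Sigma_h$, inserting this into the inf-sup condition of Theorem~\ref{infsup44} bounds $\|e_u\|_{L^2}$ and into that of Lemma~\ref{leminfsup} bounds $|e_u|_{2,h}$, both by $Ch^{k-1}\|u\|_{H^{k+1}}$.

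The three assertions then follow by triangle inequalities. For \eqref{projectiona}, $\|\theta_\bsig\|_{L^2}\le\|\bsig-\widetilde{\Pi}_h\bsig\|_{L^2}+\|e_\bsig\|_{L^2}$ and $\|\theta_u\|_{L^2}\le\|u-Q_hu\|_{L^2}+\|e_u\|_{L^2}$, using $\|u-Q_hu\|_{L^2}\le Ch^{k-1}\|u\|_{H^{k-1}}$. For \eqref{projectionb}, $|\theta_u|_{2,h}\le|u-Q_hu|_{2,h}+|e_u|_{2,h}$; because $u\in H^2(\Omega)$ forces $[u]_F=[\partial_n u]_F=0$, the jump terms in $|u-Q_hu|_{2,h}$ reduce to jumps of $Q_hu-u$, which together with the broken-$H^2$ part are controlled by local approximation plus scaled trace and inverse inequalities to yield $|u-Q_hu|_{2,h}\le Ch^{k-3}\|u\|_{H^{k-1}}$, while $|e_u|_{2,h}\le Ch^{k-1}\|u\|_{H^{k+1}}$. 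For \eqref{projectionc}, the forms in \eqref{proj} are time-independent and $\Pi_h$ is linear, so differentiating in $t$ (the mean constraint surviving since $\int_\Omega\partial_t u\,\mathrm{d}x=\partial_t\int_\Omega u\,\mathrm{d}x$) shows that $\partial_t\theta_\bsig$ and $\partial_t\theta_u$ are exactly the projection errors of $(\partial_t\bsig,\partial_t u)$; applying \eqref{projectiona} to that pair together with $\|\partial_t\bsig\|_{H^{k-1}}\le C\|\partial_t u\|_{H^{k+1}}$ concludes it.

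The main obstacle is justifying the two properties of $\widetilde{\Pi}_h$: one must verify that the boundary modifications (zeroing the relevant vertex and edge degrees of freedom on $\partial\Omega$) keep the interpolant in $\Sigma_h$, preserve the commuting identity $\operatorname{divDiv}\widetilde{\Pi}_h\bsig=Q_h\operatorname{divDiv}\bsig$, and do not spoil the optimal $L^2$ order. This requires tracking which degrees of freedom influence $\operatorname{divDiv}$ and using that the exact $\bsig$ already satisfies $\Pi_F(\bsig\boldsymbol{n}_F)=0$ and $\boldsymbol{n}_F^{\topt}\operatorname{Div}\bsig=0$ on $\partial\Omega$ (Lemma~\ref{sufficientbd}), so the modified degrees of freedom are consistent with interpolating $\bsig$. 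Everything else is the routine Brezzi machinery plus standard approximation and trace/inverse estimates.
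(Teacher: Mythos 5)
Your proposal is correct and takes essentially the same route as the paper: its proof is a three-line appeal to Theorem~\ref{infsup44}, Lemma~\ref{leminfsup} and Babu\v{s}ka--Brezzi theory for the well-posedness of \eqref{proj}, plus differentiation of \eqref{proj} in time for \eqref{projectionc}. The extra detail you supply (the mean-value splitting, the commuting interpolant $\widetilde{\Pi}_h$ with its boundary modifications, and the explicit error equations) is precisely the machinery the paper leaves implicit in that citation, so there is no substantive difference in approach.
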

	\begin{proof}
    {
    The inf-sup condition in Theorem \ref{infsup44} gives the well-posedness of \eqref{proj}.
    Babu\v{s}ka Brezzi theory \cite{boffi2013mixed} and the interpolation error estimates lead to \eqref{projectiona}.
    The derivation of \eqref{proj} with respect to $t$ yields 
    $$
    \left\{
    \begin{aligned}
    (\operatorname{divDiv} (\Pi_h \frac{\partial \theta_{\boldsymbol{\sigma}}}{\partial t} - \frac{\partial \theta_{\boldsymbol{\sigma}}}{\partial t}), v_h) &= 0, \, \forall v_h \in V_h^0,    \\
    (\Pi_h \frac{\partial \theta_{\boldsymbol{\sigma}}}{\partial t} - \frac{\partial \theta_{\boldsymbol{\sigma}}}{\partial t}, \boldsymbol{\tau}_h) - (\operatorname{divDiv} \boldsymbol{\tau}_h, \Pi_h \frac{\partial \theta_u}{\partial t} - \frac{\partial \theta_u}{\partial t}) &=0, \, \forall \boldsymbol{\tau}_h \in \Sigma_h.
    \end{aligned} \right.
    $$
    Similar arguments prove \eqref{projectionc}.
    Lemma \ref{leminfsup} and Babu\v{s}ka Brezzi theory lead to \eqref{projectionb}. } 	
	\end{proof}

% Theorem \ref{infsup44} and Babu\v{s}ka Brezzi theory \cite{boffi2013mixed} lead to the well-posedness of \eqref{proj}, which proves \eqref{projectiona}. The derivation of \eqref{proj} with respect to $t$ yields analogous equations of $\frac{\partial \theta_{\bsig} }{\partial t}$ and $\frac{\partial \theta_{u} }{\partial t}$. Similar arguments prove \eqref{projectionc}. Lemma \ref{leminfsup} and Babu\v{s}ka Brezzi theory lead to \eqref{projectionb}.
    
	\subsection{Error Estimates}
	
	This subsection derives the error estimates for the linearized fully discrete mixed scheme \eqref{ds}. For the convenience of the error analysis, assume that the solution $u(\boldsymbol{x},t)$ of \eqref{meq} satisfies the following regularities
	\begin{equation}
		\label{regularity}
		\begin{aligned}
			u \in L^{\infty}\left(0, T ; H^{k+1}\right), \,
			\frac{\partial u}{\partial t} \in L^{\infty}\left(0, T ; H^{k+1}\right),\,   \frac{\partial^2 u}{\partial t^2} \in L^{\infty}\left(0, T ; L^2\right). 
		\end{aligned}
	\end{equation}
	According to \eqref{deflinfty}, there exists a uniform bound {$M > 1$} independent of $n,h,\tau$  such that
	\begin{equation}
		\label{estimateM}
		\sup_{0 \leq t \leq T} \left\{  \|u\|_{L^{\infty}},\|u\|_{H^{k+1}},   \|\frac{\partial u}{\partial t} \|_{H^{k+1}},  \|\frac{\partial^2 u}{\partial t^2}\|_{L^{2}} \right\} \leq M.
	\end{equation}
	
	Let $(\bsig^n,u^n)$ denote the value of the exact solution $(\bsig,u)$ at the time step $n$. Let $\theta_{\bsig}^n, \theta_u^n$ denote the value of $\theta_{\bsig}, \theta_u$ at the time step $n$. With the estimates of $\theta_{\bsig}^n$ and $\theta_u^n$ in Lemma \ref{lemerrorproj}, it remains to estimate
	$$
	e_{\bsig}^n \coloneqq \Pi_h \bsig^n - \bsig_h^n,\quad e_u^n \coloneqq \Pi_h u^n - u_h^n, \quad \text { for } n=0,1,2, \ldots, N.
	$$
	The following lemma will be used in the estimates. 
	
	\begin{lemma}
		\label{lem}
		There exists a constant $C$ independent of $h$ and $\tau$ such that 
		\begin{equation}
			\label{l1}
			\left\|  v_h \right\|_{L^{\infty}}  \leq C |  v_h |_{2,h}, \quad   \forall v_h \in V_h.
		\end{equation}
	\end{lemma}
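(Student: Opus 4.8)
The bound \eqref{l1} is the discrete counterpart of the Sobolev embedding $H^2(\Omega)\hookrightarrow L^\infty(\Omega)$, which holds since $d\le 3$. My plan is to compare $v_h$ with a genuinely $H^2$-conforming function, apply the continuous embedding to the latter, and absorb the mismatch into the jump terms built into $|\cdot|_{2,h}$. To this end I would fix an $H^2(\Omega)$-conforming ($C^1$) finite element space $W_h$ on $\mathcal{T}_h$ of polynomial degree at least $k-2$ — for instance an Argyris-type space in two dimensions, or one of the available $C^1$ macro-element spaces in three dimensions — together with an enriching (nodal-averaging) operator $E_h\colon V_h\to W_h$ that assigns to each degree of freedom of $W_h$ the mean of the corresponding quantities taken from the elements sharing it. The first ingredient I would invoke is the now-standard approximation estimate for such operators: there is a constant $C$ depending only on the shape regularity of $\mathcal{T}_h$ and on $k$ such that
\begin{equation*}
\sum_{K\in\mathcal{T}_h}\sum_{j=0}^{2} h_K^{2j-4}\,| v_h-E_h v_h|_{H^j(K)}^2 \le C\sum_{F\in\mathcal{F}_h^i}\left(h_F^{-3}\|[v_h]_F\|_{L^2(F)}^2+ h_F^{-1}\|[\nabla v_h]_F\|_{L^2(F)}^2\right) \le C|v_h|_{2,h}^2 ,
\end{equation*}
where in two dimensions faces are replaced by edges, as in the definition of $|\cdot|_{2,h}$. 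Here $[\nabla v_h]_F$ carries both the normal and the tangential part of the gradient jump; but since the tangential part equals $\nabla_F[v_h]_F$ and $[v_h]_F$ is a polynomial on $F$, an inverse estimate on $F$ gives $h_F^{-1}\|\nabla_F[v_h]_F\|_{L^2(F)}^2\le C h_F^{-3}\|[v_h]_F\|_{L^2(F)}^2$, which together with $\mathcal{F}_h^i\subset\mathcal{F}_h$ yields the last inequality. I would cite an existing construction of $E_h$ (e.g. of Brenner and collaborators, or Karakashian–Pascal, or Girault–Rivière–Wheeler) rather than rebuild it.

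With $E_h$ at hand, split $\|v_h\|_{L^\infty(\Omega)}\le\|v_h-E_h v_h\|_{L^\infty(\Omega)}+\|E_h v_h\|_{L^\infty(\Omega)}$ and treat the two pieces separately. For the discontinuous remainder, $v_h-E_h v_h$ is a piecewise polynomial of fixed degree on each $K$ (or on a fixed refinement of $K$ when $W_h$ is a macro-element), so the inverse inequality $\|w\|_{L^\infty(K)}\le Ch_K^{-d/2}\|w\|_{L^2(K)}$ combined with the $j=0$ part of the displayed estimate gives $\|v_h-E_h v_h\|_{L^\infty(K)}\le Ch_K^{\,2-d/2}\,|v_h|_{2,h}$; since $2-d/2>0$ for $d\in\{2,3\}$ and $h_K\le\operatorname{diam}\Omega$, maximizing over $K$ yields $\|v_h-E_h v_h\|_{L^\infty(\Omega)}\le C|v_h|_{2,h}$. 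For the conforming part, the continuous embedding gives $\|E_h v_h\|_{L^\infty(\Omega)}\le C\|E_h v_h\|_{H^2(\Omega)}$; its $H^2$-seminorm is controlled by a triangle inequality and the $j=2$ part of the estimate, namely $|E_h v_h|_{H^2(\Omega)}^2\le 2\sum_K| E_h v_h-v_h|_{H^2(K)}^2+2\sum_K|v_h|_{H^2(K)}^2\le C|v_h|_{2,h}^2$, while its lower-order part is controlled by the discrete Poincaré–Friedrichs inequality $\|v_h\|_{L^2}\le C|v_h|_{2,h}$ on $V_h^0$ (on which, as already noted, $|\cdot|_{2,h}$ is a norm), via $\|E_h v_h\|_{L^2}\le\|E_h v_h-v_h\|_{L^2}+\|v_h\|_{L^2}\le C|v_h|_{2,h}$ and the interpolation inequality $\|w\|_{H^2}\le C(\|w\|_{L^2}+|w|_{H^2})$. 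Adding the two bounds proves \eqref{l1}.

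I expect the main obstacle to be the first ingredient: producing, or quoting in exactly the needed form, a $C^1$-conforming enriching operator that accommodates the arbitrary polynomial degree $k-2$ and whose approximation error is governed precisely by the jump quantities appearing in $|\cdot|_{2,h}$; this is delicate in three dimensions, where $C^1$-conforming spaces are cumbersome. Once that is granted, everything else is routine scaling and the classical embedding. One point deserves to be flagged: the lower-order Poincaré step requires $v_h$ to have vanishing mean, so the argument is cleanest for $v_h\in V_h^0$ — which is exactly the case in which \eqref{l1} is used in the error analysis, since the error $e_u^n=\Pi_h u^n-u_h^n$ has zero mean by the mass-conservation property.
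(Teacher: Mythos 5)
Your argument is precisely the one the paper invokes: its proof of this lemma is a one-line citation to the enriching-operator technique (Brenner et al., and the averaging operators $E:V_h\to H^2(\Omega)$ of the cited references), i.e. exactly your splitting into a $C^1$-conforming part controlled by the embedding $H^2(\Omega)\hookrightarrow L^\infty(\Omega)$ for $d\le 3$ and a piecewise-polynomial remainder controlled by inverse estimates and the jump terms in $|\cdot|_{2,h}$. Your closing caveat is also well taken: since $|\cdot|_{2,h}$ vanishes on constants, the estimate as literally stated for all $v_h\in V_h$ cannot hold, and should be read on $V_h^0$ (or with $\|v_h\|_{L^2}$ added on the right), which is the setting in which the paper actually applies it.
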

	
	\begin{proof}
    This follows with the arguments of \cite[Lemma 3.7]{brenner2017ac} on the enriching operators $E:V_h \rightarrow H^2(\Omega)$
    \cite{Carstensen2023, georgoulis2011posteriori, wangming2013}. Further details are omitted for brevity.
	\end{proof}
   
    To establish the error estimates for the fully discrete scheme \eqref{ds}, a key result is needed: $\| \eun \|_{L^\infty}$ is bounded. The proof of the following theorem uses mathematical induction to assert this result and then establishes the error estimates based on this result.
	
	\begin{theorem}
		\label{error}
		Under the regularity assumption \eqref{regularity}, there exist two positive constants $h_0$ and $\tau_0$ such that when $h<h_0$ and $\tau<\tau_0$, \eqref{ds} is uniquely solvable with $k \geq 3$ and the following error estimate holds
		\begin{equation}
			\label{digui}
			\max _{0 \leq n \leq N} \left(    \left\|e_u^n\right\|_{L^2}^2+\sum_{i=1}^n \tau\left\|e_{\bsig}^i\right\|_{L^2}^2  \right)     \leq C_*\left(\tau^2+h^{2 k-2}\right),
		\end{equation}
	where {$C_*=C_*(\varepsilon,M)$} is a positive constant independent of $n, h$ and $\tau$.
	\end{theorem}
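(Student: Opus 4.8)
The plan is to run a single mathematical induction over the time level $n$ on the joint hypothesis
\[
\|e_u^n\|_{L^2}^2 + \sum_{i=1}^n \tau \|e_{\bsig}^i\|_{L^2}^2 \le C_*(\tau^2 + h^{2k-2}), \qquad \|e_u^n\|_{L^\infty} \le 1,
\]
where the bound on $\|e_u^n\|_{L^\infty}$ is obtained from the $L^2$ bound via Lemma~\ref{lem} together with the inverse-type estimate $|e_u^n|_{2,h} \le |\theta_u^n|_{2,h} + |\Pi_h u^n - u_h^n|_{2,h}$ and a discrete inverse inequality relating $|e_u^n|_{2,h}$ to $\|e_u^n\|_{L^2}$ on the quasi-uniform mesh; for $h,\tau$ small enough the right-hand side of the $L^\infty$ estimate is $\le 1$, provided $2k-2 - (\text{inverse exponent}) > 0$, which holds for $k\ge 3$ after balancing against \eqref{projectionb}. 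First I would set up the error equations: subtract \eqref{ds} from the projected exact equations, using the projection identities \eqref{proj} to kill the $\theta$-terms in the bilinear pairings, obtaining
\[
\tfrac{1}{\tau}(e_u^n - e_u^{n-1}, v_h) + (\operatorname{divDiv} e_{\bsig}^n, v_h) = (R^n, v_h), \qquad
(e_{\bsig}^n, \btau_h) - (\operatorname{divDiv}\btau_h, e_u^n) = \bigl(\tfrac{1}{\varepsilon^2}(f(u_h^{n-1}) - f(u^{n-1}))\I, \btau_h\bigr),
\]
where $R^n$ collects the truncation error $\partial_t u(t^n) - \tau^{-1}(u^n - u^{n-1})$ and $\tau^{-1}(\theta_u^n - \theta_u^{n-1})$, both controlled by \eqref{estimateM} and \eqref{projectionc} as $O(\tau) + O(h^{k-1})$ in $L^2$.

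Next I would take $v_h = e_u^n$ in the first equation and $\btau_h = e_{\bsig}^n$ in the second, add them so that the $(\operatorname{divDiv} e_{\bsig}^n, e_u^n)$ terms cancel, and use the identity $(e_u^n - e_u^{n-1}, e_u^n) = \tfrac12(\|e_u^n\|^2 - \|e_u^{n-1}\|^2 + \|e_u^n - e_u^{n-1}\|^2)$ to produce a telescoping structure. This yields
\[
\tfrac{1}{2\tau}\bigl(\|e_u^n\|_{L^2}^2 - \|e_u^{n-1}\|_{L^2}^2\bigr) + \|e_{\bsig}^n\|_{L^2}^2 \le (R^n, e_u^n) + \tfrac{1}{\varepsilon^2}\bigl(f(u_h^{n-1}) - f(u^{n-1}), \I\, e_{\bsig}^n\bigr).
\]
The nonlinear term is handled by writing $f(u_h^{n-1}) - f(u^{n-1}) = f(u_h^{n-1}) - f(\Pi_h u^{n-1}) + f(\Pi_h u^{n-1}) - f(u^{n-1})$; since $f(u) = u^3 - u$ has the local Lipschitz bound $|f(a) - f(b)| \le C(1 + |a|^2 + |b|^2)|a-b|$, and the induction hypothesis $\|e_u^{n-1}\|_{L^\infty}\le 1$ together with $\|\Pi_h u^{n-1}\|_{L^\infty} \le \|u^{n-1}\|_{L^\infty} + \|\theta_u^{n-1}\|_{L^\infty} \le M + 1$ (using Lemma~\ref{lem} and \eqref{projectionb}) bounds $\|u_h^{n-1}\|_{L^\infty}$ uniformly, the first difference is controlled by $C\|e_u^{n-1}\|_{L^2}$ and the second by $C\|\theta_u^{n-1}\|_{L^2} \le Ch^{k-1}$. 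A Cauchy–Schwarz and Young's inequality split then absorbs $\tfrac12\|e_{\bsig}^n\|_{L^2}^2$ on the left and leaves $C(\|e_u^{n-1}\|_{L^2}^2 + \|e_u^n\|_{L^2}^2 + \tau^2 + h^{2k-2})$ on the right.

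Summing over $n$ and applying the discrete Gronwall inequality (Lemma~\ref{lemgron}) with $a_n = \|e_u^n\|_{L^2}^2$, $b_n = \|e_{\bsig}^n\|_{L^2}^2$, and $\gamma_n$ constant (so $\tau\gamma_n < 1$ for $\tau < \tau_0$) gives \eqref{digui} with $C_*$ depending on $T$, $M$, $\varepsilon$ and the mesh-regularity constants but not on $n, h, \tau$; the base case $n=0$ holds since $e_u^0 = \Pi_h u^0 - u_h^0 = 0$ by the choice $u_h^0 = \Pi_h u_0$. Unique solvability of \eqref{ds} follows from the inf-sup condition in Theorem~\ref{infsup44} (the scheme is a square linear system whose homogeneous version reduces, via the same energy argument with zero data, to $e_u^n = 0$ and then $e_{\bsig}^n = 0$). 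The main obstacle is the circularity between the $L^2$ error bound and the $L^\infty$ control of $u_h^{n-1}$ needed to linearize $f$: this is exactly why both must be carried in the induction simultaneously, and the delicate point is verifying that the inverse inequality picks up only $h^{-(\text{something})}$ with a small enough exponent that the bound $C_*(\tau^2 + h^{2k-2}) h^{-2\alpha} \le 1$ still closes for $k \ge 3$ — this is where the broken $H^2$ semi-norm $|\cdot|_{2,h}$ and Lemma~\ref{lem} do the essential work, replacing a direct (unavailable) $H^2$-conforming $L^\infty$ bound.
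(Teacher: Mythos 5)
Your overall architecture --- simultaneous induction on the $L^2$ error bound and the $L^\infty$ bound of $e_u^{n-1}$, the error equations obtained via the projection \eqref{proj}, the energy argument with the telescoping identity, the three-way treatment of the nonlinearity using local Lipschitz continuity of $f$, and the discrete Gronwall closure --- matches the paper's proof. There is, however, a genuine gap in how you close the $L^\infty$ part of the induction. You propose to recover $\|e_u^n\|_{L^\infty}\le 1$ from the freshly proved $L^2$ bound by an inverse inequality, giving something of the form $\|e_u^n\|_{L^\infty}\le C h^{-\alpha}\sqrt{C_*(\tau^2+h^{2k-2})}\le C h^{-\alpha}(\tau+h^{k-1})$. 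The $h^{k-1-\alpha}$ contribution can be made small for $k\ge 3$ (with the direct $L^\infty$--$L^2$ inverse inequality, $\alpha=d/2$; your route through $|\cdot|_{2,h}$ gives $\alpha=2$ and already fails at $k=3$), but the term $\tau h^{-\alpha}$ cannot be made small when $h\to 0$ with $\tau$ fixed. Since the theorem asserts the existence of \emph{independent} thresholds $h_0$ and $\tau_0$, your argument only closes under an unstated mesh--time coupling such as $\tau\lesssim h^{\alpha}$, which is a weaker statement than claimed.

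The paper resolves exactly this point by a case split. When $\tau\le h^{k-1}$ the inverse inequality is harmless because $\tau h^{-d/2}\le h^{k-1-d/2}$. When $\tau\ge h^{k-1}$ it abandons the inverse inequality entirely: it combines Lemma~\ref{lem} with the discrete inf-sup condition of Lemma~\ref{leminfsup} to bound $\|e_u^n\|_{L^\infty}\le C|e_u^n|_{2,h}\le C\beta^{-1}\sup_{\btau_h}(e_u^n,\operatorname{divDiv}\btau_h)/\|\btau_h\|_{L^2}$, then substitutes the second error equation to control this by $\|e_{\bsig}^n\|_{L^2}+\varepsilon^{-2}\|f(u^n)-f(u_h^{n-1})\|_{L^2}$; the first term is $\le\tau^{-1/2}\sqrt{C_*(\tau^2+h^{2k-2})}\le\sqrt{2C_*}\,\tau^{1/2}$ in this regime and the second is $O(\tau)+O(h^{k-1})=O(\tau)$, so the whole bound is $O(\tau^{1/2})$ uniformly in $h$. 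This is the missing idea you need. A second, minor slip: the right-hand side of your second error equation should be $\varepsilon^{-2}(f(u_h^{n-1})-f(u^{n}))\I$ rather than $\varepsilon^{-2}(f(u_h^{n-1})-f(u^{n-1}))\I$; the omitted consistency piece $f(u^{n-1})-f(u^n)$ is $O(\tau)$ and is absorbed the same way, so this does not affect the outcome.
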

	
	\begin{proof}
		To establish \eqref{digui}, the following estimate 
		\begin{equation}
			\label{digui2}
			\| \eun \|_{L^{\infty}} \leq M, \quad \forall0 \leq n \leq N,
		\end{equation}
		will be invoked iteratively. Both \eqref{digui} and \eqref{digui2} are proven for $n=0, \ldots, N$  by mathematical induction. At the initial time step, $$e_{\bsig}^0 = \Pi_h \bsig^0 - \bsig_h^0 = 0, \quad e_u^0 = \Pi_h u^0 - u_h^0 = 0 .$$ 
		This shows that \eqref{digui} and \eqref{digui2} hold for $n=0$. Assuming that \eqref{digui} and \eqref{digui2} hold for $n-1$, it remains to verify that both \eqref{digui} and \eqref{digui2} hold for $n$.
		
		\textbf{Step 1 derives the error equation of $\esn$ and $\eun$}. At time $t^n$, \eqref{mvar2} leads to
		\begin{equation*}
			\left\{   
			\begin{aligned}
				( \left. \frac{\partial u}{\partial t} \right|_{t^n} ,v_{h}) + (\operatorname{divDiv}  \bsig^{n}, v_{h}) &= 0,   &&\forall  v_{h} \in {V_h}, &  \\
				( \bsig^{n},  \btau_h )- (\operatorname{divDiv}\btau_{h}, u^{n})&= (  -\frac{1}{\varepsilon^{2}}f(u^{n})\I, \btau_{h} ),&&\forall \btau_{h} \in {\Sigma_h}.&
			\end{aligned}\right.
		\end{equation*}
		The combination with \eqref{proj} yields
		\begin{equation}
			\label{pds}
			\left\{  
			\begin{aligned}
				( D_{\tau} u^n ,v_{h}) + (\operatorname{divDiv} \Pi_h \bsig^{n}, v_{h}) &= ( R_1^n ,v_h), &&\forall  v_{h} \in  {V_h}, & \\
				(\Pi_h \bsig^{n},  \btau_h )- (\operatorname{divDiv}\btau_{h},\Pi_h u^{n})&= ( R_2^n ,\btau_h) + ( -\frac{1}{\varepsilon^{2}}f(u^{n-1})\I, \btau_{h}),  &&\forall \btau_{h} \in {\Sigma_h}&
			\end{aligned}\right.
		\end{equation} 
		with
		$$
		\begin{aligned}
			D_{\tau} u^n \coloneqq \frac{u^n-u^{n-1}}{\tau},\,
			R_1^n \coloneqq D_\tau u^n-\left. \frac{\partial u}{\partial t} \right|_{t^n},\,
			R_2^n \coloneqq \frac{1}{\varepsilon^{2}}f(u^{n-1})\I-\frac{1}{\varepsilon^{2}}f(u^{n})\I.
		\end{aligned}
		$$
		Subtracting the fully discrete scheme \eqref{ds} from \eqref{pds} shows
		\begin{equation}
			\label{eds}
			\left\{  
			\begin{aligned}
				( D_{\tau} e_u^n ,v_{h}) + (\operatorname{divDiv} \esn , v_{h}) &= -(D_{\tau}{\theta}_{u}^{n},v_{h}) + ( R_1^n ,v_h) ,&&\forall  v_{h} \in {V_h},&    \\ 
				( \esn ,  \btau_h )- (\operatorname{divDiv}\btau_{h},e_u^{n})&= ( R_2^n ,\btau_{h}) + (R_3^n , \btau_h),  &&\forall \btau_{h} \in {\Sigma_h}&
			\end{aligned}    \right.
		\end{equation}
		with $$R_3^n \coloneqq  \frac{1}{\varepsilon^{2}}f(u^{n-1}_h)\I -\frac{1}{\varepsilon^{2}}f(u^{n-1})\I.$$
		Taking $(\btau_h, v_h) = (\esn, \eun)$ in \eqref{eds} and summing up the results lead to the following error equation
		\begin{equation}
			\label{sum}
			\begin{aligned}
				\left(D_\tau e_u^n, e_u^n\right)+\left\| \esn \right\|_{L^2}^2 =& -\left(D_\tau \theta_u^n, e_u^n\right) + (R_1^n, e_u^n) + ( R_2^n, \esn ) +  ( R_3^n, \esn ). \\
			\end{aligned}
		\end{equation}
		
		\textbf{Step 2 estimates the terms on the right hand side of \eqref{sum}}.  For the first term of \eqref{sum}, the Taylor expansion with $\xi_{1} \in (t^{n-1},t^{n})$, \eqref{projectionc}, and \eqref{estimateM} lead to
		\begin{equation}
			\label{r2}
			\begin{aligned}
				-\left(D_\tau \theta_u^n, e_u^n\right)  &\leq \|D_\tau \theta_u^n\|_{L^2} \|e_{u}^{n}\|_{L^2} = \| \frac{\partial \theta_u}{\partial t}(\boldsymbol{x},\xi_{1})\|_{L^2} \left\|e_u^n\right\|_{L^2} \\
				&\leq C_1 h^{k-1}\|\frac{\partial u }{\partial t}(\boldsymbol{x},\xi_1)\|_{H^{k+1}} \|\eun\|_{L^2} \leq  C_1 Mh^{k-1}\|\eun\|_{L^2}
			\end{aligned}  
		\end{equation}
		with some constant $C_1$ from Lemma \ref{lemerrorproj}. 
		For the second term of \eqref{sum}, the Taylor expansion with $\xi_{2} \in (t^{n-1},t^{n})$ and \eqref{estimateM} give rise to 
		\begin{equation}
			\label{r1}
			\begin{aligned}
				(R_1^n, e_u^n) &= ( -\frac{1}{2} \frac{\partial^2 u}{\partial t^2}(\boldsymbol{x}, \xi_{2}) \tau , e_u^n ) \leq \frac{1}{2} \| \frac{\partial^2 u}{\partial t^2} (\boldsymbol{x}, \xi_{2}) \tau\|_{L^2} \|\eun\|_{L^2}\\
				&=\frac{1}{2}\tau\|\frac{\partial^2 u}{\partial t^2}(\boldsymbol{x}, \xi_{2})\|_{L^2} \|\eun\|_{L^2} \leq \frac{1}{2}M\tau \|\eun\|_{L^2}.
			\end{aligned}
		\end{equation}
		For the third term of \eqref{sum}, the Taylor expansion with $\xi_{3} \in (t^{n-1},t^{n})$ and \eqref{estimateM} show
		\begin{equation}
			\label{r3}
			\begin{aligned}
				&( R_2^n, \esn ) = \ep \left(f(u^{n-1}) - f(u^{n}), \operatorname{tr}  \esn \right) = \ep \left( \frac{\partial(u^3-u)}{\partial t} (\boldsymbol{x},\xi_{3}) \tau, \operatorname{tr}  \esn \right) \\
				&\quad \quad \quad \leq \ep \tau \|3u^{2}-1\|_{L^{\infty}} \|\frac{\partial u}{\partial t}(\boldsymbol{x},\xi_3)\|_{L^2} \|\operatorname{tr}\esn \|_{L^2} \leq \frac{3}{\varepsilon^{2}} M^3 \tau \left\| \operatorname{tr}\esn \right\|_{L^2}.
			\end{aligned}    
		\end{equation}
		Since \eqref{digui2} holds for $n-1$, it follows from Lemma \ref{lem}, \eqref{projectionb} and \eqref{estimateM} that
		\begin{equation*}
			\begin{aligned}
				\|u_{h}^{n-1}\|_{L^{\infty}} & \leq 
				\|e_{u}^{n-1}\|_{L^{\infty}} + \| \theta_{u}^{n-1}\|_{L^{\infty}} + \|u^{n-1}\|_{L^{\infty}} \\
				&\leq M+C_1 C_2h^{k-3}M+M = C_3M
			\end{aligned}
		\end{equation*}
		with some constant $C_2$ from Lemma \ref{lem} and $C_3 = C_1 C_2 h^{k-3}+2 $. This shows
		\begin{equation*}
			\begin{aligned}
				( R_3^n, \esn ) &= \ep \left( (u_h^{n-1}-u^{n-1}) \left( (u_h^{n-1})^2+(u^{n-1})^2+u_h^{n-1}u^{n-1}-1 \right),\operatorname{tr}\esn \right) \\
				& \leq \ep(C_3^2M^2+M^2+C_3M^2)    \|\theta_{u}^{n-1}+e_{u}^{n-1}\|_{L^2} \| \operatorname{tr}\esn  \|_{L^2}\\
				& \leq \ep  {C_4}M^2\|e_{u}^{n-1}\|_{L^2}\|\operatorname{tr}\esn\|_{L^2} + \ep {C_4}M^2\|\theta_{u}^{n-1}\|_{L^2}\|\operatorname{tr}\esn\|_{L^2}  \\
			\end{aligned}
		\end{equation*}
		with $C_4 = C_3 +C_3^2+1$. This and \eqref{projectiona} lead to the upper bound of the last term of \eqref{sum} by
		\begin{equation}
			\label{r44}
			\begin{aligned}
				( R_3^n, \esn ) \leq \ep {C_4}M^2\|e_{u}^{n-1}\|_{L^2}\|\operatorname{tr}\esn\|_{L^2} + \ep C_1  {C_4}M^3h^{k-1}\|\operatorname{tr}\esn\|_{L^2}.
			\end{aligned}
		\end{equation}
		
		\textbf{Step 3 proves the estimate \eqref{digui}}. The substitution of \eqref{r2}-\eqref{r44} into \eqref{sum} shows
		\begin{equation}
			\label{r}
			\begin{aligned}
				\left(D_\tau e_u^n, e_u^n\right)+ & \left\| \esn \right\|_{L^2}^2  \leq C_1 Mh^{k-1}\|\eun\|_{L^2}+\frac{1}{2}M\tau \|\eun\|_{L^2}+\frac{3}{\varepsilon^{2}} M^3 \tau \left\| \operatorname{tr}\esn \right\|_{L^2}\\
				&+ \ep {C_4}M^2\|e_{u}^{n-1}\|_{L^2}\|\operatorname{tr}\esn\|_{L^2} + \ep C_1  {C_4}M^3h^{k-1}\|\operatorname{tr}\esn\|_{L^2}.
			\end{aligned}
		\end{equation}
		Note that $\|\operatorname{tr}\esn\|_{L^2}^2 \leq d \|\esn\|_{L^2}^2$ with $d=2,3$. For any $\delta >0$,   this, \eqref{r} and Young's inequality imply
		\begin{equation}
			\label{rr}
			\begin{aligned}
				\left(D_\tau e_u^n, e_u^n\right)+\left\| \esn \right\|_{L^2}^2 & \leq \left(\frac{C_1^2 M^2}{2}+\frac{M^2}{8} + \frac{ 9M^6}{4\varepsilon^4\delta}+\frac{ C_1^2C_4^2
					M^6}{4\varepsilon^4\delta} \right)(\tau^2+h^{2k-2})  \\
				&~~+ \|\eun\|_{L^2}^2 + \frac{{C_4^2} M^4}{4\varepsilon^4\delta}\|e_{u}^{n-1}\|_{L^2}^2 + 3d\delta \|\esn\|_{L^2}^2.
			\end{aligned}
		\end{equation}
		Let 
		$${C}_5 =  \mathrm{max} \left\{ \frac{C_1^2 M^2}{2}+\frac{M^2}{8} +\frac{9M^6}{4\varepsilon^4\delta}+\frac{ C_1^2C_4^2
			M^6}{4\varepsilon^4\delta} , 1, \frac{ {C_4^2} M^4}{4\varepsilon^4\delta} \right\}. $$
		This and \eqref{rr} yield
		\begin{equation}
			\label{rrrr}
			\left(D_\tau e_u^n, e_u^n\right)+\left(1-3d\delta\right)\left\| \esn \right\|_{L^2}^2 \leq C_5\left(\tau^2+h^{2 k-2}\right) + C_5 \| \eun \|_{L^2}^2 + C_5 \| e_u^{n-1} \|_{L^2}^2.
		\end{equation}
		Taking $\delta < \frac{1}{3d}$ and multiplying $\frac{1}{1-3d\delta} > 1$ on both side of \eqref{rrrr}, the summation from $1$ to $n$ leads to
		$$
		\left\|e_u^n\right\|_{L^2}^2+\tau \sum_{i=1}^n\left\|e_{\bsig}^i \right\|_{L^2}^2 \leq \tau C_6 \sum_{i=1}^n\left\|e_u^i\right\|_{L^2}^2+\tau C_6 \sum_{i=1}^n\left(\tau^2+h^{2 k-2}\right)
		$$
		with $C_6=\frac{2C_5}{1-3d\delta}$. 
		If $\tau$ is sufficiently small such that $C_6 \tau \leq \frac{1}{2}$, then Lemma \ref{lemgron} shows
		$$
		\begin{aligned}
			\left\|e_u^n\right\|_{L^2}^2+\tau \sum_{i=1}^n\left\|e_{\bsig}^i\right\|_{L^2}^2 &\leq  \exp \left(\frac{T C_6}{1-C_6 \tau}\right)\left(  \tau C_6 \sum_{i=1}^n\left(\tau^2+h^{2 k-2}\right) \right) \\
			&\leq C_6 T \exp (2 T C_6)\left(\tau^2+h^{2 k-2}\right).
		\end{aligned}
		$$
		The choice of  $C_* \geq C_6 T \exp (2 T C_6)$ concludes that \eqref{digui} holds for $n$. 
		
		\textbf{Step 4 proves  $\| e_u^n \|_{L^{\infty}} \leq M.$} If $\tau \leq h^{k-1}$, then an inverse estimate with a constant $C_7$ and \eqref{digui} show
		\begin{equation}
			\label{compare1}
			\| \eun \|_{L^\infty} \leq C_7 h^{-\frac{d}{2}} \| \eun \|_{L^2}\leq C_7 h^{-\frac{d}{2}} \sqrt{ C_* (\tau^2 + h^{2k-2}) } \leq C_7 \sqrt{ 2C_*}h^{k-1-\frac{d}{2}} .
		\end{equation}
		If $\tau \geq h^{k-1}$, then Lemma \ref{lem}, Lemma \ref{leminfsup}, {and \eqref{eds}} lead to
		\begin{equation}
			\label{eun}
			\begin{aligned}
				\| e_u^n \|_{L^{\infty}} \leq& C_2 | \eun|_{2,h}  \leq \frac{C_2}{\beta} \sup_{{\btau_h} \in \Sigma_{h}} \frac{ \left(\eun, \operatorname{divDiv} {\btau_h}\right)}{ \| {\btau_h} \|_{L^2} } \\
				=&	\frac{C_2}{\beta}\sup_{{\btau_h} \in \Sigma_{h}} \left(\frac{ \left(\esn, {\btau_h}\right)}{ \| {\btau_h} \|_{L^2} } + \frac{ \left(f(u^n) - f(u_h^{n-1}), \operatorname{tr} {\btau_h}\right)}{ \varepsilon^2 \| {\btau_h} \|_{L^2} } \right)\\
				\leq & \frac{C_2}{\beta} \left\| \esn \right\|_{L^2} +  \frac{C_2 \sqrt{d}}{\varepsilon^2 \beta} \left\| f(u^n) - f(u_h^{n-1})\right\|_{L^2}.
			\end{aligned}
		\end{equation}
		For the first term on the right hand side of \eqref{eun}, \eqref{digui} shows 
		\begin{equation}
			\label{n-3}
			\| \esn \|_{L^2} \leq \tau^{-\frac{1}{2}} \sqrt{\tau \| \esn \|_{L^2}^2} \leq \tau^{-\frac{1}{2}} \sqrt{ C_* (\tau^2 + h^{2k-2}) } \leq \sqrt{ 2C_*}\tau^{\frac{1}{2}} .
		\end{equation}
		A triangle inequality, \eqref{r3}-\eqref{r44} plus \eqref{digui} imply
		\begin{equation}
			\label{n-1}
			\begin{aligned}
				\left\| f(u^n) - f(u_h^{n-1})\right\|_{L^2} &\leq 	\left\| f(u^n) - f(u^{n-1})\right\|_{L^2} + \left\| f(u^{n-1}) - f(u_h^{n-1})\right\|_{L^2} \\
				&\leq 3M^3\tau + {C_4}  M^2 \left\|e_u^{n-1}\right\|_{L^2} + C_1  {C_4} M^3h^{k-1}\\
				& \leq 3M^3\tau + {C_4}  M^2 \tau \sqrt{2C_*}+ C_1  {C_4} M^3h^{k-1}.
			\end{aligned}
		\end{equation}
		The substitution of \eqref{n-3}-\eqref{n-1} into \eqref{eun} gives
		\begin{equation}
			\label{compare2}
			\| \eun \|_{L^{\infty}} \leq \frac{C_2}{\beta}\sqrt{2C_*} \tau^{\frac{1}{2}} +\frac{C_2 \sqrt{d}}{\varepsilon^2 \beta} \left(  (3+C_1C_4) M^3 + C_4 M^2 \sqrt{2C_*} \right) \tau.
		\end{equation}
		Consequently, \eqref{compare1} and \eqref{compare2} show that there exist sufficiently small constants $\tau_0$ and $h_0$ such that $\| \eun \|_{L^{\infty}} \leq M$ holds for any $\tau \leq \tau_0$ and $h \leq h_0$. This concludes the proof.
	\end{proof}
	
	The theorem below follows immediately from Lemma \ref{lemerrorproj} and Theorem \ref{error}.
	
	\begin{theorem}
		\label{thmerror}
		Under the conditions in Theorem \ref{error}, \eqref{ds} is uniquely solvable with $k \geq 3$ and the following error estimate holds
		\begin{equation}
			\label{Error}
			\max _{0 \leq n \leq N}\left(\left\|u_h^n-u^n\right\|_{L^2}^2+\tau \sum_{i=1}^n\left\|\bsig_h^i-\bsig^i\right\|_{L^2}^2\right) \leq C \left(\tau^2+h^{2k -2 }\right),
		\end{equation}
	  where {$C=C(\varepsilon,M)$} is a positive constant independent of $n, h$ and $\tau$.
	\end{theorem}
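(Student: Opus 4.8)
The plan is to obtain \eqref{Error} by splitting each error into its projection part and its discrete part and then invoking the two results that have already done the real work. Using the definitions $e_u^n = \Pi_h u^n - u_h^n$, $e_{\bsig}^n = \Pi_h \bsig^n - \bsig_h^n$ and $\theta_u = u - \Pi_h u$, $\theta_{\bsig} = \bsig - \Pi_h \bsig$, I would write $u_h^n - u^n = (u_h^n - \Pi_h u^n) + (\Pi_h u^n - u^n) = -e_u^n - \theta_u^n$ and, likewise, $\bsig_h^i - \bsig^i = -e_{\bsig}^i - \theta_{\bsig}^i$. A triangle inequality together with $(a+b)^2 \le 2a^2 + 2b^2$ then gives
\begin{equation*}
\|u_h^n - u^n\|_{L^2}^2 \le 2\|e_u^n\|_{L^2}^2 + 2\|\theta_u^n\|_{L^2}^2, \qquad \|\bsig_h^i - \bsig^i\|_{L^2}^2 \le 2\|e_{\bsig}^i\|_{L^2}^2 + 2\|\theta_{\bsig}^i\|_{L^2}^2 .
\end{equation*}

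For the discrete parts, Theorem \ref{error} supplies $\|e_u^n\|_{L^2}^2 + \tau\sum_{i=1}^n \|e_{\bsig}^i\|_{L^2}^2 \le C_*(\tau^2 + h^{2k-2})$ for all $0 \le n \le N$, and it also contains the unique solvability assertion, so that part of the claim is inherited verbatim. For the projection parts, Lemma \ref{lemerrorproj}, specifically \eqref{projectiona}, combined with the uniform-in-time bound \eqref{estimateM} on $\|u\|_{H^{k+1}}$, yields $\|\theta_u^n\|_{L^2} + \|\theta_{\bsig}^n\|_{L^2} \le C h^{k-1}\|u^n\|_{H^{k+1}} \le CM h^{k-1}$, hence $\|\theta_u^n\|_{L^2}^2 \le C h^{2k-2}$ and $\|\theta_{\bsig}^i\|_{L^2}^2 \le C h^{2k-2}$ uniformly in $n$ and $i$.

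Summing the $\bsig$ inequality over $i = 1,\dots,n$ and multiplying by $\tau$, the discrete contribution $2\tau\sum_{i=1}^n \|e_{\bsig}^i\|_{L^2}^2$ is controlled directly by Theorem \ref{error}, while the projection contribution is $2\tau\sum_{i=1}^n \|\theta_{\bsig}^i\|_{L^2}^2 \le 2C h^{2k-2}\, n\tau \le 2CT h^{2k-2}$ since $n\tau \le T$. Adding the bound for $\|u_h^n - u^n\|_{L^2}^2$ and taking the maximum over $0 \le n \le N$ delivers \eqref{Error} with a constant depending only on $C_*$, $T$, $M$, and the projection constant, hence independent of $n$, $h$, and $\tau$. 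There is no genuine obstacle at this stage: the delicate induction coupling $\|e_u^n\|_{L^\infty}$ with the $L^2$ errors was the substance of Theorem \ref{error}, and the present argument is a routine triangle-inequality assembly; the only point deserving a word of care is keeping track of which norm of $u$ is bounded uniformly in time, which is exactly what the regularity assumption \eqref{regularity} and the bound \eqref{estimateM} provide.
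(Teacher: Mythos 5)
Your proposal is correct and is exactly the argument the paper intends: the paper states that Theorem \ref{thmerror} ``follows immediately from Lemma \ref{lemerrorproj} and Theorem \ref{error},'' and your decomposition into $\theta$- and $e$-parts, the triangle inequality with $(a+b)^2\le 2a^2+2b^2$, the uniform bound $\|\theta_u^n\|_{L^2}+\|\theta_{\bsig}^n\|_{L^2}\le CMh^{k-1}$ from \eqref{projectiona} and \eqref{estimateM}, and the use of $n\tau\le T$ for the summed $\bsig$-term is precisely that routine assembly. Nothing further is needed.
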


	%==============================================
	% ======== 5 Numerical Results =================
	% =============================================

	\section{Numerical Results}
	\label{sec:NumericalResults}
 
	This section presents several numerical examples in two and three dimensions to test the performance of the mixed finite element method. The discrete finite element spaces $\Sigma_h$ and $V_h$ with $k=3$ in Section \ref{sec4} are used.
		
	\captionsetup{font=small}
	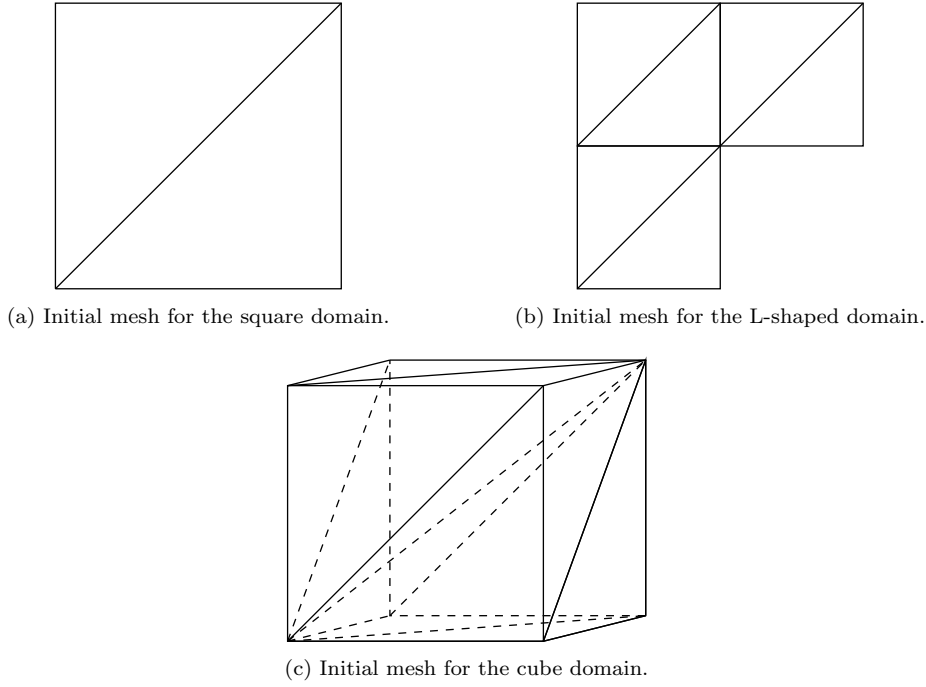
\begin{figure}[!ht]
		\centering
		\begin{minipage}[b]{0.45 \textwidth}
			\centering
			\begin{tikzpicture}[>=latex,line width=0.5pt,scale=0.95]
				\def  \a{4} 
				\draw (0,0) -- (\a,0) --(\a,\a) --(0,\a) --(0,0);
				\draw (0,0)--(\a,\a);
			\end{tikzpicture}
			\subcaption{Initial mesh for the square domain.}
			\label{fig:subplot1}
		\end{minipage}
		\hspace{0.05\textwidth}
		\begin{minipage}[b]{0.48\textwidth}
			\centering
			\begin{tikzpicture}[>=latex,line width=0.5pt,scale=0.95]
				\def  \a{2} 
				\draw (0,0) -- (\a,0) --(\a,\a) --(0,\a) --(0,0);
				\draw (0,0) -- (-\a,0) --(-\a,\a) --(0,\a) --(0,0);
				\draw (0,0) -- (0,-\a) --(-\a,-\a) --(-\a,0) --(0,0);
				\draw (0,0)--(\a,\a);
				\draw  (0,0)--(-\a,-\a);
				\draw (0,\a)--(-\a,0);
			\end{tikzpicture}
			\subcaption{Initial mesh for the L-shaped domain.}
			\label{fig:subplot2}
		\end{minipage}
		\vspace{1mm}
		\begin{minipage}[b]{0.5\textwidth}
			\centering
			\begin{tikzpicture}[line width=0.5pt,scale=1.7]
				\coordinate (A) at (0,0);
				\coordinate (B) at (2,0);
				\coordinate (C) at (2,2);
				\coordinate (D) at (0,2);
				\coordinate (A1) at (0.8,0.2);
				\coordinate (B1) at (2.8,0.2);
				\coordinate (C1) at (2.8,2.2);
				\coordinate (D1) at (0.8,2.2);
				\draw (A) -- (B) -- (C) -- (D) -- cycle;
				\draw (B1) -- (C1) -- (D1);
				\draw[dashed] (B1) -- (A1) -- (D1);
				\draw[dashed] (A) -- (A1);
				\draw (B) -- (B1);
				\draw (C) -- (C1);
				\draw (D) -- (D1);
				\draw[dashed] (A) -- (C1);
				\draw[dashed] (A) -- (B1);
				\draw[dashed] (A) -- (D1);
				\draw (A) -- (C);
				\draw (B) -- (C1);
				\draw (D) -- (C1);
				\draw[dashed] (A1) -- (C1);
				\draw (A) -- (B) -- (B1) -- (C1) -- (B);
			\end{tikzpicture}
			\subcaption{Initial mesh for the cube domain.}
			\label{fig:subplot3}
		\end{minipage}
		\caption{Initial meshes.}
		\label{fig:mainfigure}
	\end{figure}
	
	\subsection{Accuracy test in 2D}
	Let $\Omega=(0,1)^2$ be a square domain.  Consider an analytical solution of \eqref{teq} as
	\begin{equation*}
		\label{example1}
		u( \boldsymbol{x} ,t)= u( x,y ,t) = \exp (-t) \cos (\pi x) \cos (\pi y) .
	\end{equation*}
	The source term $g$ and boundary terms $g_1, g_2$ in \eqref{teq} are chosen correspondingly. The initial triangulation is shown in Figure \ref{fig:subplot1}. Each triangulation is refined into a half-sized triangulation uniformly, to get a higher level triangulation.
	In this example, take $\varepsilon=1$, and choose $\tau = 1 \mathrm{E}-5$ small enough and $\tau = h^2$, respectively.
	Table \ref{tab1} records the errors $\| \bsig^{N} - \bsig_{h}^{N}\|_{L^2}$, $\| u^{N} - u_{h}^{N}\|_{L^2}$, and the convergence rates for the discrete scheme \eqref{ds} with $k=3$. From Table \ref{tab1}, one can observe that both the convergence rates for $\bsig_h$ and $u_h$ equal $2$. This coincides with Theorem \ref{thmerror}.
 
	\subsection{Postprocessing}		
	This subsection provides a postprocessing technique to improve the convergence rates for $\boldsymbol{\sigma}_h$ and $u_h$. The postprocessing method is as follows:
     \begin{itemize}
     \item[\textnormal{(i)}] Obtain $(\boldsymbol{\sigma}_h^{n},u_h^{n})$ from \eqref{ds} for the time step $n \leq N$.
     \item[\textnormal{(ii)}] Find the postprocessed discrete solution $\tilde{u}_{h}^{N} \in \hat{V}_{h}$: for each $K \in \mathcal{T}_h$,
    \begin{equation*}
		\label{post}
		\left\{  
		\begin{aligned}
			(\tilde{u}_{h}^{N},v_{h})_K &=( {u}_{h}^{N},v_{h})_K, &&\forall  v_{h} \in V_h,&  \\
			(\nabla^2 \tilde{u}_{h}^{N}, \nabla^2 q_h)_K&= (\bsig_{h}^{N},  \nabla^2 q_h)_K +  (\ep f(u_h^{N})\I,  \nabla^2 q_h)_K,&& \forall q_{h} \in \hat{V}_h&
		\end{aligned}   \right.
	\end{equation*}
    with the piecewise polynomial space
	$$
	\hat{V}_{h}\coloneqq\left\{q_h \in L^2(\Omega):\left. q_h \right|_K \in P_{k+2}(K),\, \forall K \in \mathcal{T}_h\right\}.
	$$
	\item[\textnormal{(iii)}] Find the postprocessed solution $\tilde{\bsig}_{h}^{N} \in \Sigma_h$ such that  
	\begin{equation*}
		\label{schemepost}
		\left\{  
		\begin{aligned}
			(\operatorname{divDiv} \tilde{\bsig}_{h}^{N}, v_{h})&=\frac{1}{\tau} ({u}_{h}^{N-1} - {u}_{h}^{N},v_{h}), &&\forall  v_{h} \in {V_h},&  \\
			(\tilde{\bsig}_{h}^{N}, \btau_{h} )- (\operatorname{divDiv}\btau_{h},\hat{u}_{h}^{N})&= ( -\frac{1}{\varepsilon^{2}}f(\tilde{u}_{h}^{N})\I, \btau_{h} ),&& \forall \btau_{h} \in \Sigma_h.& 
		\end{aligned}   \right.
	\end{equation*}
    \end{itemize}
	Table \ref{tabpost} shows that the convergence rates for $\tilde{\bsig}_h$ and $\tilde{u}_h$ are improved to 4, while the convergence rate for $u_h$ equals 2.

 	\begin{table}[!ht]
		\centering
		\caption{Errors on the square domain.}
		\label{tab1}
		\setlength{\tabcolsep}{4.5mm}{
			\begin{tabular}{c c c c  c}
				\toprule
				Mesh &$\| \bsig^{N} - \bsig_{h}^{N}\|_{L^2}$ & Rate & $\| u^{N} - u_{h}^{N}\|_{L^2}$  & Rate \\ 
				\hline
				& \multicolumn{2}{c}{$\varepsilon=1,\tau = 1 \mathrm{E}-5, T= 2\mathrm{E}-4$} &&\\
				\hline
				1 &   3.26E+00&	-	&2.69E-01&	- \\ 
				2 &   2.61E-01&	3.64	&7.37E-02&	1.87 \\ 
				3 &  2.62E-02&	3.32	&1.95E-02&	1.92 \\ 
				4 & 4.46E-03&	2.56	&4.95E-03&	1.98 \\ 
				5 &  1.07E-03&	2.05	&1.24E-03&	1.99 \\
				6 & 2.68E-04&	2.00	&3.11E-04&	2.00 \\
				\hline
				& \multicolumn{2}{c}{$\varepsilon=1,\tau = h^2, T= 1$} &&\\
				\hline
				1 &   1.18E+00&	-	    &1.06E-01&	- \\ 
				2 &   1.02E-01&	3.54	&2.71E-02&	1.97 \\ 
				3 &  1.23E-02&	3.04	&7.17E-03&	1.92 \\ 
				4 & 2.69E-03&	2.19	&1.82E-03&	1.98 \\ 
				5 &  6.71E-04&	2.00	&4.58E-04&	1.99 \\
				\bottomrule
		\end{tabular}}
	\end{table}

 	\begin{table}[!ht]
		\centering
		\caption{Errors on the square domain with postprocessing.}
		\label{tabpost}
		\setlength{\tabcolsep}{0.8mm}{
			\begin{tabular}{c c  c c  c c  c c}
				\toprule
				$\| \bsig^{N} - \bsig_{h}^{N}\|_{L^2}$ & Rate &$\| \bsig^{N} - \tilde{\bsig}_{h}^{N}\|_{L^2}$ & Rate & $\| u^{N} - u_{h}^{N}\|_{L^2}$  & Rate &  $\| u^{N} - \tilde{u}_{h}^{N}\|_{L^2}$ & Rate\\ 
				\hline
				& \multicolumn{4}{c}{$\varepsilon=1,\tau = 1 \mathrm{E}-6, T= 4\mathrm{E}-5$} &&\\
				\hline
				3.26E+00  &  - & 3.26E+00&   	-	&2.69E-01  &   -  & 1.32E-01   & -\\ 
				2.60E-01  &  3.65 & 2.56E-01&	3.67  &7.37E-02 & 1.87 & 9.86E-03 & 3.74\\ 
				2.64E-02  &  3.30 & 2.10E-02&	3.61 &1.95E-02  & 1.92  & 7.47E-04 & 3.72\\ 
			    4.46E-03  &  2.56 & 1.37E-03&  3.93  &4.95E-03 & 1.98 & 4.83E-05 & 3.95\\ 
				\hline
				& \multicolumn{4}{c}{$\varepsilon=1,\tau = h^4, T= 1$} & & \\
				\hline
				1.18E+00  &   - & 1.17E+00&   	-	&9.94E-02  &   -  & 4.83E-02   & -\\ 
				9.74E-02  &   3.60 & 9.58E-02&	3.61  &2.72E-02 & 1.87 & 5.15E-03 & 3.23\\ 
				8.51E-03  &  3.52 & 6.94E-03&	3.79  &7.17E-03  & 1.92  & 2.65E-04 & 4.28\\ 
				1.38E-03  &  2.62 & 4.48E-04&  3.95  & 1.82E-03 & 1.98 & 1.71E-05 & 3.95\\ 
				\bottomrule
		\end{tabular}}
	\end{table}
    
	\subsection{Accuracy test on the L-shaped domain}
    \label{secLdomain}
	Let $\Omega=(-1,1)^2 \backslash([0,1] \times[-1,0])$ be an L-shaped domain. Consider an analytical solution of \eqref{teq} as
	\begin{equation*}
		\label{example3}
		u(\boldsymbol{x}, t) = u(r, \theta, t)= \exp (-t) r^{\frac{4}{3}}\cos( \frac{2}{3}\theta).
	\end{equation*}
	The function $g$ and boundary conditions $g_1$ and $g_2$ are chosen correspondingly. The initial triangulation is shown in Figure \ref{fig:subplot2}. In this example, take $\varepsilon=1$, and choose $\tau = 1 \mathrm{E}-5$ small enough and $\tau = h^2$, respectively.
	Table \ref{tabLshape} records the errors $\| \bsig^{N} - \bsig_{h}^{N}\|_{L^2}$, $\| u^{N} - u_{h}^{N}\|_{L^2}$, and the convergence rates for the discrete scheme \eqref{ds} with $k=3$. From Table \ref{tabLshape}, the convergence can still be observed on the L-shaped domain. The convergence rates are degenerate because the solution possesses singularity at the origin. The rate equals $0.33$ for $\bsig_h$ and $0.66$ for $u_h$.

	\begin{table}[!ht]
		\centering
		\caption{Errors on the L-shaped domain.}
		\label{tabLshape}
		\setlength{\tabcolsep}{4.5mm}{
			\begin{tabular}{c c c c  c}
				\toprule
				Mesh &$\| \bsig^{N} - \bsig_{h}^{N}\|_{L^2}$ & Rate & $\| u^{N} - u_{h}^{N}\|_{L^2}$  & Rate \\ 
				\hline
				& \multicolumn{2}{c}{$\varepsilon=1,\tau = 1 \mathrm{E}-5, T= 2\mathrm{E}-4$} &&\\
				\hline
				1 &   2.81E+00&	-	       &5.53E-01&	- \\ 
				2 &   1.96E+00&	0.52	&3.78E-01&	0.55 \\ 
				3 &  1.44E+00&	0.45	&2.52E-01&	0.59 \\ 
				4 &  1.09E+00&	0.40	&1.62E-01&	0.64 \\ 
				5 &  8.44E-01&	0.37	&1.00E-01&	0.69 \\ 
				\hline
				& \multicolumn{2}{c}{$\varepsilon=1,\tau = h^2, T= 1$} &&\\
				\hline
				1 &         7.05E-01  &   	-	   &2.64E-01  &	   -    \\ 
				2 &      5.89E-01&	  0.26	 &1.85E-01  &	0.51 \\ 
				3 &      4.71E-01&	   0.32	  &1.20E-01  &	 0.63 \\ 
				4 &    3.75E-01&	 0.33  	&7.50E-02 &   0.68 \\ 
				5 &  2.98E-01&	0.33   &4.56E-02 &	 0.72 \\
				\bottomrule
		\end{tabular}}
	\end{table}
    
	\subsection{Accuracy test in 3D}
	Let $\Omega=(0,1)^3$ be a cubic domain.  Consider an analytical solution of \eqref{teq} as
	\begin{equation*}
		\label{example2}
		u(\boldsymbol{x},t) = u(x, y, z,t)=\exp (-t) \cos (\pi x) \cos (\pi y) \cos(\pi z).
	\end{equation*}
	The source term $g$ and boundary terms $g_1, g_2$ in \eqref{teq} are chosen correspondingly. The initial triangulation is shown in Figure \ref{fig:subplot3}. In this example, take $\varepsilon=1$, and choose $\tau = 1 \mathrm{E}-5$ small enough and $\tau = h^2$, respectively.
	Table \ref{tab3D} records the errors $\| \bsig^{N} - \bsig_{h}^{N}\|_{L^2}$, $\| u^{N} - u_{h}^{N}\|_{L^2}$, and the convergence rates for the discrete scheme \eqref{ds} with $k=3$. Table \ref{tab3D} shows that the convergence rate for $u_h$ equals $2$. This coincides with Theorem \ref{thmerror}. 
	
	\begin{table}[!ht]
		\centering
		\caption{Errors on the cube domain.}
		\label{tab3D}
		\setlength{\tabcolsep}{4.5mm}{
			\begin{tabular}{c c c c  c}
				\toprule
				Mesh &$\| \bsig^{N} - \bsig_{h}^{N}\|_{L^2}$ & Rate & $\| u^{N} - u_{h}^{N}\|_{L^2}$  & Rate \\ 
				\hline
				& \multicolumn{2}{c}{$\varepsilon=1,\tau = 1 \mathrm{E}-5, T= 2\mathrm{E}-4$} &&\\
				\hline
				1 &  4.89E+00&	-	&2.45E-01&	- \\ 
				2 &   4.87E-01&	3.33	&6.41E-02&	1.94 \\ 
				3 &  4.07E-02&	3.58	&1.73E-02&	1.89 \\ 
				4 &  4.91E-03&	3.05	&4.42E-03&	1.97 \\ 
				\hline
				& \multicolumn{2}{c}{$\varepsilon=1,\tau = h^2, T= 1$} &&\\
				\hline
				1 &   1.78E+00&	-	    &9.13E-02&	- \\ 
				2 &   1.78E-01&	3.33	&2.36E-02&	1.95 \\ 
				3 &  1.50E-02&	3.57	&6.35E-03&	1.89 \\ 
				\bottomrule
		\end{tabular}}
	\end{table}

	\subsection{Coalescence of two drops}
	Consider the coalescence of two material drops governed by the Cahn-Hilliard equation \eqref{teq} {with an additional parameter $m=$1E-2 as follows
    $$
    \frac{\partial u}{\partial t} -m^2\Delta\left(-\Delta u+\frac{1}{\varepsilon^2} f(u)\right) = g(\boldsymbol{x},t) \text{ in} ~ \Omega \left. \times(0, T\right].
    $$} 
    Assume that, at time $t = 0$, the first material occupies two circular regions that are right next to each other and the second material fills the rest of the domain. The two regions of the first material then coalesce with each other to form a single drop under the Cahn-Hilliard dynamics. The initial distribution for the materials from \cite{Yang2019} reads
	\begin{equation*}
		u_0(\boldsymbol{x})=1-\tanh \frac{\left|\boldsymbol{x}-\boldsymbol{x}_0\right|-R_0}{\sqrt{2} \varepsilon}-\tanh \frac{\left|\boldsymbol{x}-\boldsymbol{x}_1\right|-R_0}{\sqrt{2} \varepsilon},
	\end{equation*}
	where $\varepsilon=0.01$ is the characteristic inter-facial thickness, $\boldsymbol{x}_0=(0.3,0.5)$ and $\boldsymbol{x}_1=(0.7,0.5)$ are the centers of the circular regions for the first material, and $R_0=0.19$ is the radius of these circles. Set $g=0, g_1=0, g_2=0$ of \eqref{teq} for simulation of natural phenomena. The process of coalescence of the two drops is demonstrated in Figure \ref{figdrops} with a temporal sequence of snapshots of the
	interfaces between the materials (visualized by the contour level $u= 0$).

	\begin{figure}[!ht]
		\centering
		
		\begin{subfigure}[b]{0.48\textwidth}
			\includegraphics[width=\textwidth]{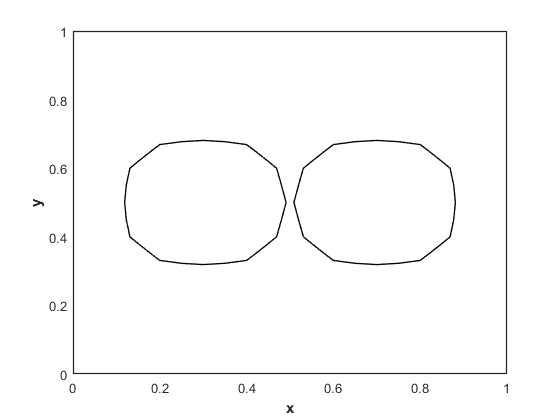}
			\caption{t=0}
		\end{subfigure}
		\quad
		\begin{subfigure}[b]{0.48\textwidth}
			\includegraphics[width=\textwidth]{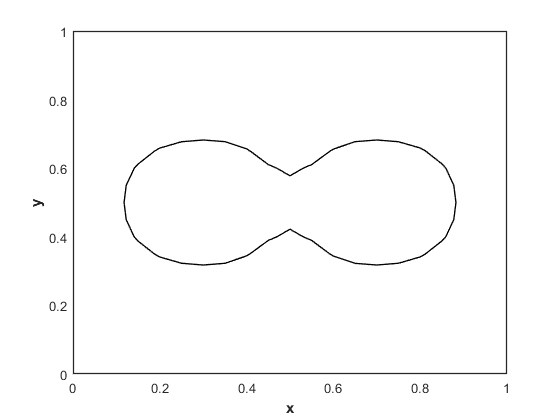}
			\caption{t=1}
		\end{subfigure}
		
		\begin{subfigure}[b]{0.48\textwidth}
			\includegraphics[width=\textwidth]{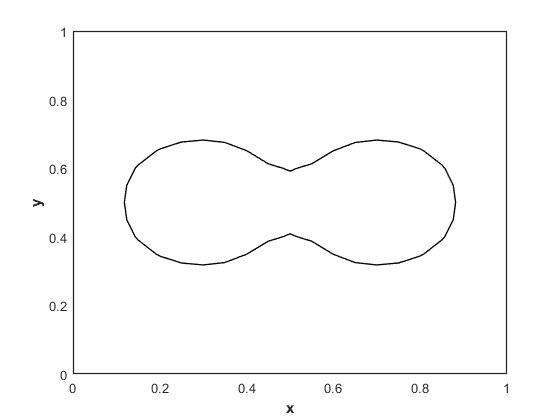}
			\caption{t=2}
		\end{subfigure}
		\quad
		\begin{subfigure}[b]{0.48\textwidth}
			\includegraphics[width=\textwidth]{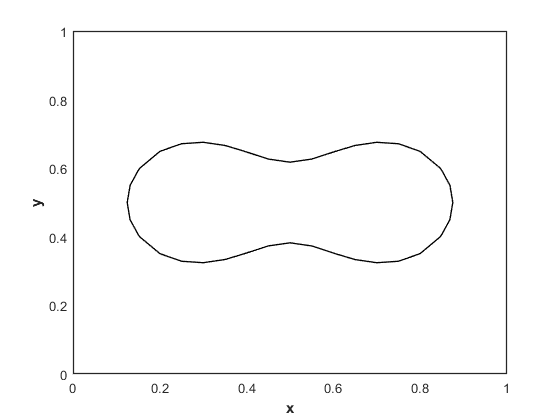}
			\caption{t=10}
		\end{subfigure}
		
		\caption{Temporal sequence of snapshots for the coalescence of two drops. Results are obtained with $\tau = 1E-2$ and $h=\sqrt{2}/20$.}
		\label{figdrops}
	\end{figure}

    \section{Concluding remarks}
    \label{sec:concluding}
    
    This paper developed a new mixed finite method for the Cahn-Hilliard equation. The well-posedness of the mixed formulation was provided by proving its equivalence with the primal formulation. The error estimates of the linearized fully discrete scheme was established by mathematical induction. The boundedness of $\|u_h\|_{L^\infty}$ was proved. Some postprocessing technique was given to improve the convergence rates which was verified by numerical tests. The theoretical result for the postprocessing could be obtained by following similar arguments as in \cite[Theorem 2.1]{wang2016postprocessing}, which analyzed the postprocessing for solving Cahn-Hilliard equations by the Ciarlet-Raviart method. 
    
    Since the Cahn-Hilliard equation describes the dissipation of the Cahn-Hilliard free energy \cite{zhang2010nonconforming} in a conservative system, it is desirable for a numerical scheme to preserve this energy property. One future work will investigate the discrete energy of the current mixed finite element method. Besides, this paper has only discussed the case with the interface parameter $\varepsilon=1$. However, a small $\varepsilon \ll 1$ is important in applications and brings difficulties in computation. The numerical schemes when $\varepsilon$ approaches zero will be discussed in the future work.

\section{Declaration of competing interest}
The authors declare no competing interests.
\section{Data availability}
No data was used for the research described in the article.

 \bibliographystyle{elsarticle-num} 
 \bibliography{references}

\end{document}